\newcommand{\R}{\mathbf R} %% permet de faire plus simplement un R avec double barre
\newcommand{\N}{\mathbf N}
\newcommand{\Z}{\mathbf Z}
\newtheorem{theorem}{Theorem}[section]   % enlever le % devant [section] pour avoir une autre numerotation...
\newtheorem{definition}[theorem]{Definition}
\newtheorem{example}[theorem]{Example}
\newtheorem{corollary}[theorem]{Corollary}
\newtheorem{proposition}[theorem]{Proposition}
\newtheorem{lemma} [theorem]{Lemma}
\newtheorem{remark}[theorem]{Remark}
\newtheorem{fact}[theorem]{Fact}
\begin{document}
\title{Maximal simply connected Lorentzian surfaces \\with a Killing field and their completeness}
\author{Lilia Mehidi}
\maketitle
\begin{abstract}
In the first part of this paper, we give a global description of simply connected maximal Lorentzian surfaces whose group of isometries is of dimension 1 (i.e. with a complete Killing field), in terms of a $1$-dimensional generally non-Hausdorff manifold (the space of Killing orbits) and a smooth function defined there. 
In the second part, we study the completeness of such surfaces and prove in particular that under the hypothesis of bounded curvature, completeness is equivalent to null completeness. We also give completeness criterions involving the topological structure of the space of Killing orbits, or both the topology of this space and the geometry of the surface. 
\end{abstract}

\section{Introduction}
\subsection{On maximal simply connected Lorentzian surfaces with a Killing field}
The local geometry of a Lorentzian surface with a Killing field is well known. Denote by $K$ the Killing field; locally one of the two null foliations is defined by a vector field $L$ such that $\langle K, L \rangle =1$, and the metric is locally given by $2dxdy+f(x)dy^2$, where $L= \partial_{x}, K = \partial_{y}$, and $f$ is a $y$-independent scalar function. So the metric is locally determined by the norm of the Killing field. The globalization of this picture cannot be done in general; the obstruction to globalizing this picture is the possible presence of Reeb components in the foliation defined by the Killing field or the orthogonal foliation, which amounts to saying that none of the two null foliations is globally transverse to the Killing field (this has been studied in \cite{BM}; we refer to Section \ref{Section préliminaires} of this paper for details). 

In the first part of this paper, we will see that the global structure of a Lorentzian surface $(X,K)$ with a Killing field $K$ can be analyzed in terms of the space $\mathcal{E}_X$ of Killing orbits and a smooth function defined there. This leads us in the first place to the study of the topology and the geometry of this space. 

A first problem is that $\mathcal{E}_X$ can lack even the fundamental properties of a manifold; for example, consider a flat $2$-dimensional torus, and take as Killing field any constant vector field with irrational slope, then the lines of the Killing field are dense in $T^2$ and the space of leaves has the trivial topology. Nothing like this occurs in $\R^2$, where the leaves can be neither closed curves nor dense. To prevent this kind of behaviour of the Killing orbits, we assume that $X$ is simply connected, hence homeomorphic to $\R^2$. In this case, if $K$ is supposed to be non singular, then $\mathcal{E}_X$ is a $1$-dimensional simply connected manifold with a countable base (usually non-Hausdorff): Haefliger-Reeb \cite{MR0089412}  (see \cite[Definition 1 p. 112]{MR0089412} for the definition of simple connectedness in the case of non-Hausdorff manifolds). 

Let $(X,K)$ be a simply connected Lorentzian surface with a Killing field $K$. The Killing field defines a singular foliation of the plane whose singularities are saddle points.  The space of leaves of such a foliation is no longer a simply connected manifold, but it gives rise to a nice manifold $\mathcal{E}_X$ of dimension $1$ such that there exists a local diffeomorphism $\textbf{x} \in C^{\infty}(\mathcal{E}_X,\R)$ (see the last paragraph of Section \ref{Section préliminaires}).  This local diffeomorphism induces a Riemannian structure on $\mathcal{E}_X$ by taking the pullback by $\textbf{x}$ of the Euclidean metric on $\R$. We want to give a description of simply connected Lorentzian surfaces $(X,K)$ with a Killing field $K$ using the two following data: 
\begin{itemize}[parsep=0pt, itemsep=0pt]
	\item the space $\mathcal{E}_X$ of Killing orbits, which is a $1$-dimensional (generally non Hausdorff) orientable manifold, equipped with some combinatorial data that we call "a linking structure" (see Definition \ref{Définition linking structure}), together with a local diffeomorphism $\textbf{x} \in C^{\infty}(\mathcal{E}_X,\R)$ that defines on $\mathcal{E}_X$ a Riemannian structure,
	\item a smooth function defined on it, given by the function induced by the norm of $K$ on $\mathcal{E}_X$.
\end{itemize}
The simple-connectedness is not the end of the problem however. Indeed, in general, the two
data given above do not determine the global structure of the surface. To see this, consider a non-flat torus $(T,K)$ with a Killing field $K$ admitting a null orbit. The flow of $K$ induces a free action of the group $S^1$ on $T$ \cite[Theorem 3.25]{BM}, so in particular, the space of leaves is a circle. Denote by $\widetilde{K}$ the lift of $K$ to the universal cover. There is a local diffeomorphism 
$\textbf{x} \in C^{\infty} (\mathcal{E}_{\widetilde{T}}, \R)$
which is actually a global diffeomorphism since $\mathcal{E}_{\widetilde{T}}$ is Hausdorff of dimension $1$, so the norm of $\widetilde{K}$ on the universal cover can be written $\langle \widetilde{K}, \widetilde{K} \rangle = f \circ \textbf{x}$, where $f \in C^{\infty}(\R,\R)$ is a non constant periodic function that vanishes. Now,  given a smooth non constant periodic function $f$ that vanishes, one can associate to it infinitely many metrics on the torus with non-isometric universal cover. 
\begin{theorem}[Bavard-Mounoud, \cite{BM}]
	The isometry class of the universal cover of a non-flat torus with a Killing field admitting a null orbit is determined by a non constant periodic function $f \in C^{\infty}(\R,\R)$, that vanishes, together with a countable set of periodic marking points on $\R$.
\end{theorem}
This is a consequence of the following important fact: given a simply connected Lorentzian surface $(X,K)$, the presence of a Killing field leads to the existence of local reflections defined in the interior of the connected components of the set $X \smallsetminus \{\langle K,K \rangle =0\}$; these reflections are isometries permuting the null foliations and they are called "\textit{generic reflections}" (see \cite[Proposition 2.5]{BM}).

%(see \cite{BM} for details, or \cite{LM} Section 1 for a quick introduction).  
Indeed, let $\mathcal{E}$ be a $1$-dimensional Hausdorff manifold homeomorphic to a circle, whose universal cover is equipped with a diffeomorphism $\textbf{x} \in C^{\infty}(\widetilde{\mathcal{E}},\R)$. The pullback of the Euclidean metric $d x^2$ on $\R$ by $\textbf{x}$ defines a Riemannian metric on $\widetilde{\mathcal{E}}$. Let $F \in C^{\infty}(\mathcal{E},\R)$ be a non constant smooth function on $\mathcal{E}$ that vanishes, and define $f \in C^{\infty}(\R,\R)$ by $\widetilde{F}=f \circ \textbf{x}$, where $\widetilde{F}$ is the lift of $F$ to the universal cover.  One can define a Lorentzian surface $2dxdy +f(x)dy^2, \,(x,y) \in \R^2$. Since $f$ is periodic, this surface is the universal cover of a torus $(T,K)$ with Killing vector field $K=\partial_{y}$, whose space of leaves is isometric to $(\R, d x^2)$ and whose norm is given by $f$ in the $x$-coordinate. Observe that the null foliation tangent to $\partial_{x}$ here is everywhere transverse to $K=\partial_y$; such a surface  is called "a ribbon" in \cite{BM} (see Definition \ref{Définition ruban/domino} in this paper).  However one can get another isometry class of Lorentzian metric on the torus with a Killing field, in the following way: define a periodic sequence of surfaces $U_i:=(I_i \times \R, 2dxdy +f_i(x)dy^2), i \in S$, where $f_i:=f_{|I_i}$,  such that $I_i \cap I_{i+1}$ is a connected component of $\R \smallsetminus \{f=0\}$, and glue $U_i$ and $U_{i+1}$ using a generic reflection. We obtain again the universal cover of a torus with Killing vector field $K= \partial_{y}$, whose space of leaves is isometric to $(\R, d x^2)$ and whose norm is given by $f$ in the $x$-coordinate. If $S$ has only one element, the surface obtained is a ribbon like before. As soon as $S$ has more than one element, there is no longer a null foliation everywhere transverse  to $K$ in the torus. \\

The problem above is avoided by supposing that the surface is maximal. A connected Lorentzian surface is said to be maximal if it is inextendible, i.e. if it is not (isometric to) a proper open submanifold of some connected Lorentzian surface. In Subsection \ref{Sous-section 1.2 de l'introduction (complétude)} of the introduction, it appears that a non-flat torus $(T,K)$ with a Killing field $K$ admitting a null orbit is actually not maximal.\\ 

Let $(X,K)$ be a simply connected surface with a Killing field $K$. First, we examine the topology of the $1$-dimensional manifold given by the space of leaves; a topology satisfying these constraints will be referred to as the $\mathfrak{T}$ topology (see Definition \ref{Définition topologie T}). This will give in particular a topological characterization of the space of Killing orbits of such surfaces when the set of branch points of $\mathcal{E}_X$ is locally finite.
Then, we give a description of simply connected and maximal Lorentzian surfaces admitting a nontrivial
Killing field. We obtain the following result:
\begin{theorem} \label{Introduction, Proposition: correspondence between maximal surfaces and étalé manifolds}
	There is a bijection between smooth simply connected and maximal Lorentzian surfaces $(X,K)$ admitting a non-trivial complete Killing  field $K$ (up to isometry), and the quadruplets $(\mathcal{E},\mathcal{A},\textbf{x},F)$, up to equivalence (see Theorem \ref{Proposition: correspondance entre surfaces maximales et variétés étalées} for more details), where 
	
	\begin{enumerate}[itemsep=0pt, topsep=0pt, parsep=0pt]
		
		\item $\mathcal{E}$ is a $1$-dimensional smooth manifold with a countable base, and topology $ \mathfrak{T} $,
		\item $\mathcal{A}$ is a linking structure on $\mathcal{E}$,
		\item $ \textbf{x}: \mathcal{E} \to \R $ is a smooth local diffeomorphism, defined up to translation and change of sign, defining on $ \mathcal{E} $ a translation structure,
		\item $ F \in C^{\infty}(\mathcal{E}, \R) $ is an $\mathcal{A}$-inextensible smooth function  such that the set $\{F=0\}$ is composed of 
		
		(a) the closure of the set of branch points, with simple zeros on cycles of branch points,
		
		%(b) points of $ \mathcal{E} $ which are limit points of the set of branch points,
		% (b) if $ f $ is of rank $ 1 $ (resp. of rank $ 0 $) in a point $ x_0 $ of $ \ mathcal {E} $, then it is rank $ 1 $ (resp. rank $ 0 $ ) at any point not separated from $ x_0 $.
		(b) elements $ I \in \Sigma $ whose closure in $ \mathcal{E} $ is Hausdorff.
	\end{enumerate}
\end{theorem}
Here, $\Sigma$ denotes the set of connected components of the interior of $\mathcal{E} \smallsetminus B$, where $B$ is the set of branch points of $\mathcal{E}$.

If the space of leaves is Hausdorff (for example, if $X$ is the universal cover of a compact surface), there is only one linking structure on it, so this data is only of interest when the space of leaves is non Hausdorff.  \\

Once the correspondence in Theorem \ref{Introduction, Proposition: correspondence between maximal surfaces and étalé manifolds} is obtained, we will give in the second part of this paper some classes of surfaces in which one can say something about geodesic completeness. In particular, we can describe all complete and simply connected  Lorentzian surfaces admitting a Killing field, with bounded sectional curvature. 

\subsection{On the completeness of Lorentzian surfaces with a Killing field}\label{Sous-section 1.2 de l'introduction (complétude)}

\textbf{Compact case: Hopf-Rinow vs Clifton-Pohl.} Hopf-Rinow's theorem states that a Riemannian metric is geodesically complete if and only if the canonically associated distance is complete; in particular any compact Riemannian manifold is complete; the same holds for any Riemannian manifold that is globally homogeneous. There are no analogous conclusions in the Lorentzian case, and it is well known that a compact Lorentzian manifold may be geodesically incomplete. More generally (see \cite[Theorem p. 229]{MR1306563}), if a Lorentzian torus is null complete, then its null foliations contain no Reeb components. In particular, the Clifton-Pohl torus is null incomplete (see Figure 1 below), and all the complete metrics belong to the same connected component of the space of Lorentzian metrics, the one containing the flat metrics.
\begin{figure}[h!] 
	\labellist 
	\small\hair 2pt 
	%	\pinlabel {$f \text{\;\;change de signe}$} at 860 350 
	%	\pinlabel {$f \text{\;\;ne change pas de signe}$} at 900 21 
	\endlabellist 
	\centering 
	\includegraphics[scale=0.17]{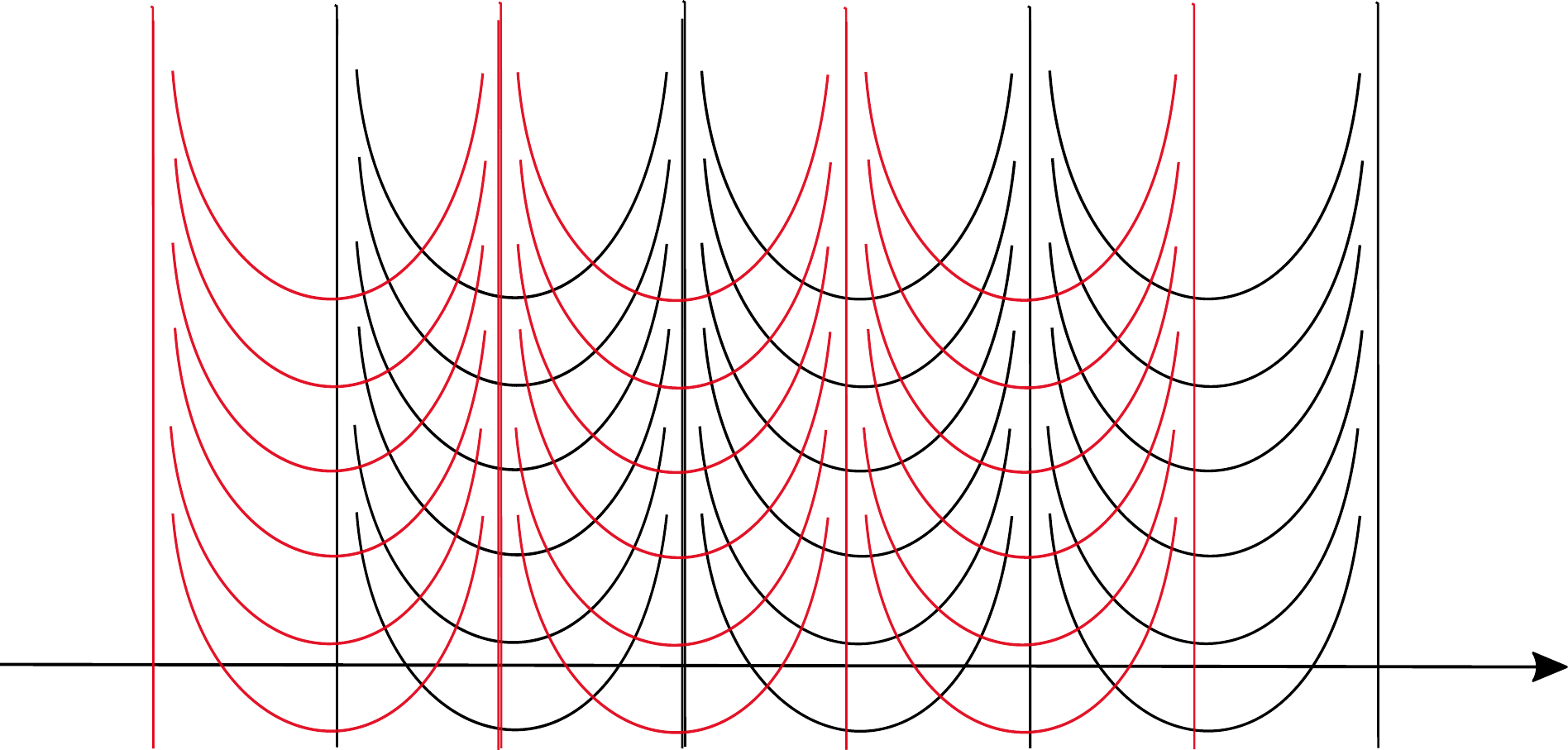} \caption{The null foliations of the Clifton-Pohl torus contain Reeb components; consequently, the Clifton-Pohl torus is null incomplete.}
\end{figure}

In \cite{MR1230550} and \cite{MR1267937}, Romero and Sanchez give different examples and different ways to obtain incomplete Lorentzian metrics on the torus $ T^2 $. At the same time, Guediri and Lafontaine obtained in \cite{MR1310948} examples of incomplete compact and locally homogeneous Lorentzian manifolds. \\
The lack of completeness in the compact case gave rise to the obtaining  of additional conditions which, joint to compactness, would imply completeness of the Lorentzian manifold. Thus, the question of geodesic completeness in the pseudo-Riemannian setting is often viewed from the point of view of $ (G, X) $- structures: under what assumptions are the $ (G, X) $- compact manifolds (where $G$ preserves a pseudo-Riemannian metric) geodesically complete ? When $ G $ preserves a Riemannian metric, all $ (G, X) $- compact manifolds are complete, by the Riemannian Hopf-Rinow theorem. Carrière shows in \cite{MR979369} that any flat Lorentzian metric on $ T^2 $ is complete; these surfaces are locally modeled on Minkowski plane. Klingler generalizes this to Lorentzian manifolds of constant curvature \cite{MR1411352}. Sanchez shows in \cite{MR1376554} that a non-flat $2$-dimensional torus admitting a Killing field with a null orbit is never complete. This becomes clear if one remembers that the universal cover of such a torus is a proper open subset of some maximal Lorentzian surface homeomorphic to $\R^2$, called its "universal extension" (this is a result by Bavard and Mounoud \cite[Proposition 3.3, Theorem 3.25]{BM}). These extensions are proved to be complete in \cite{LM}.  In this paper, we study more generally the completeness of simply connected Lorentzian surfaces admitting a Killing field.

In their paper \cite{MR1230550}, Romero and Sanchez conjecture that the null completeness of a compact Lorentzian manifold implies completeness; which is proved to be true for locally homogeneous manifolds \cite{MR977043}. If we consider tori with one of the two null foliations by circles, then this is "generically true" \cite{MR1306563}, but there are arguments to think that maybe a counterexample can be found. We obtain the following result:
\begin{theorem} \label{Completeness to bounded curvature, introduction}
	A smooth Lorentzian surface with a complete Killing field and bounded curvature is complete if and only if it is null complete.
\end{theorem}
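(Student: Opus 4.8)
The direction ``complete $\Rightarrow$ null complete'' is immediate, so the content is the converse; assume $(X,K)$ has a complete Killing field, bounded curvature and is null complete, and let $\gamma$ be a timelike or spacelike geodesic. The plan is to reduce the geodesic equations to a single autonomous first-order equation. Away from the zeros of $K$ the metric is $2\,dx\,dy+f(x)\,dy^{2}$ with $K=\partial_y$ and $\langle K,K\rangle=f(x)$; the quantities $C:=\langle\gamma',K\rangle$ and $E:=\langle\gamma',\gamma'\rangle$ are constant along $\gamma$, and a short computation with the Christoffel symbols gives, for $u:=\mathbf x\circ\gamma$ (with $\mathbf x$ identified with its pull-back to $X$),
\[
\dot u^{2}=C^{2}-E\,f(u),\qquad \ddot u=-\tfrac12 E\,f'(u),\qquad \dot y=\frac{C-\dot u}{f(u)}\ \ \text{where } f(u)\neq0.
\]
The crucial point is that $u$ is a \emph{globally} defined function on the maximal interval of $\gamma$: it is $\gamma$ composed with the global local diffeomorphism $\mathbf x\in C^{\infty}(\mathcal E_X,\R)$, and $f$ is the global function of Proposition~\ref{Introduction, Proposition: correspondence between maximal surfaces and étalé manifolds}, so the three identities above persist through any coordinate patch, and even through the zeros of $K$ (which are branch points of $\mathcal E_X$). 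Finally, bounded curvature is equivalent to bounded $f''$ (the Gaussian curvature of $2\,dx\,dy+f(x)\,dy^{2}$ is $\tfrac12 f''(x)$), hence $|f|$ grows at most quadratically and $|f'|$ at most linearly.

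Suppose for contradiction that $\gamma$ has finite right endpoint $t_{+}$ (the left one is symmetric). From $\dot u^{2}=C^{2}-Ef(u)$ and the at-most-quadratic growth of $f$ one gets $|\dot u|\le a+b|u|$, so Gronwall's inequality forbids $u$ from escaping to $\pm\infty$ in finite time; hence $u$ stays bounded on $[0,t_{+})$. On that bounded $x$-range $f,f',f''$ are bounded, so $\ddot u$ is bounded, so $u$ and $\dot u$ extend continuously to $t_{+}$; set $x_{0}:=u(t_{+})$, and note $\dot u(t_{+})^{2}=C^{2}-Ef(x_{0})$. If $f(x_{0})\neq 0$ then near $t_{+}$ the functions $\dot y=(C-\dot u)/f(u)$ and $\ddot y=\tfrac12 f'(u)\dot y^{2}$ are bounded, so $\gamma$ and $\gamma'$ extend continuously to $t_{+}$ and the geodesic prolongs past $t_{+}$ --- contradicting maximality. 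Therefore $f(x_{0})=0$: the Killing orbit of $\gamma(t)$ converges, as $t\to t_{+}$, to a null Killing orbit $\ell=\{\mathbf x=x_{0}\}$, and $\dot u(t_{+})=\pm|C|$.

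The remaining task is to exclude this last scenario. Using $(C-\dot u)(C+\dot u)=Ef(u)$: if $\dot u(t_{+})$ has the sign with $C+\dot u(t_{+})\neq 0$, then $\dot y=E/(C+\dot u)$ stays bounded near $t_{+}$ and $\gamma$ again prolongs --- contradiction. In the complementary ``escaping'' case $C+\dot u\to 0$ (which includes the subcase $C=0$, where $\dot u\to0$ and $\dot u^{2}=-Ef(u)$), one finds that $|dy/du|$ is of order $1/|f(u)|$ as $u\to x_{0}$, so the parameter along the Killing orbit diverges in finite affine time and $\gamma$ leaves every compact set by running off along $\ell$. Now null completeness enters through Part~1: along $\ell$ one has $\nabla_{K}K=-\tfrac12 f'(x_{0})K$, so $\ell$ is a null pre-geodesic whose affine reparametrisation covers only a half-line whenever $f'(x_{0})\neq0$; together with the absence of Reeb components for a null complete surface (cf.\ \cite{MR1713903},\cite{BM}) and Proposition~\ref{Introduction, Proposition: correspondence between maximal surfaces and étalé manifolds}, null completeness forces the zeros of $f$ to consist only of flat intervals $I\in\Sigma$ and of branch orbits. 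A non-null geodesic cannot converge into the interior of a flat interval --- there the metric is $2\,dx\,dy$ and $u(t)\to x_{0}$ forces $\dot u\equiv 0$, hence $E=\langle\gamma',\gamma'\rangle=0$, absurd --- so $x_{0}$ must lie on a branch orbit; but at a branch orbit the local function $\mathbf x$ fails to be a chart, and the geodesic, reaching $x_{0}$, crosses the branch into an adjacent sheet of $\mathcal E_X$ and prolongs past $t_{+}$, the final contradiction. Hence every non-null geodesic is complete, and with the null-completeness hypothesis this proves the theorem.

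The step I expect to be the genuine obstacle is this last one: the precise local analysis of a bounded-curvature geodesic whose Killing orbit limits to a null orbit $\ell$, i.e.\ deciding between turning back, crossing transversally, and escaping to infinity, and showing that the escaping scenario is incompatible with null completeness. This is exactly where the fine structure of $\mathcal E_X$ near branch orbits and the behaviour of the generic reflections there is needed; the bounded-curvature hypothesis is used precisely to control the blow-up rate of $\dot y$ near $\ell$, ruling out a Clifton--Pohl type singularity of the connection along the limiting null orbit.
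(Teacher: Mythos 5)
The first half of your argument is sound and is essentially the paper's estimate for geodesics trapped in a maximal ribbon: the first integral $\dot u^{2}=C^{2}-\epsilon f(u)$ together with $|f''|\le 2N$ gives $|C^{2}-\epsilon f(x)|\le A(1+x^{2})$, hence $\int dx/\sqrt{C^{2}-\epsilon f}=\infty$ when the transverse coordinate is unbounded; your Gronwall step is this same inequality read backwards. The divergence from the paper, and the gap, is in the endgame. The paper does not do a local analysis at the limiting null orbit at all: it splits the remaining geodesics into (a) those eventually trapped in a \emph{band}, which are complete by Proposition \ref{Complétude dans une bande} (imported from \cite{LM}, Prop.~2.14 -- a genuinely nontrivial input), and (b) those leaving every maximal ribbon, for which it shows that each passage between two consecutive zeros of $\beta$ costs proper time at least $1/\sqrt N$, using $|C^{2}-\epsilon f|\le N|\Delta_{x_i}|^{2}$ on a piece whose endpoints are zeros of $C^{2}-\epsilon f$. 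Your proof replaces both of these by the claim that a geodesic with $u(t)\to x_{0}$, $f(x_{0})=0$, $t_{+}<\infty$ ``reaches $x_{0}$, crosses the branch into an adjacent sheet of $\mathcal E_X$ and prolongs past $t_{+}$.'' This is exactly where the argument fails: in the escaping case $C+\dot u\to 0$ you have already observed that $\dot y\sim 2C/f(u)$ blows up and $y\to\infty$ in finite proper time, so the geodesic does \emph{not} converge to any point of $X$; there is nothing to prolong, and the existence of a branch point of $\mathcal E_X$ over $x_{0}$ is irrelevant. What has to be proved is either that this escape costs infinite proper time, or that the geodesic in fact converges in $X$ to the saddle point completing the null orbit; neither follows from the combinatorics of $\{f=0\}$. (Indeed on the ribbon $2\,dx\,dy+x^{2}dy^{2}$ the spacelike geodesic $x=\sin t$ escapes along $\{x=0\}$ at $t=\pi$; what saves the theorem there is that this ribbon is not null complete -- the transverse null geodesics $\dot x=-C$ are incomplete -- and this interplay is precisely what Proposition \ref{Complétude dans une bande} and Lemma \ref{asymptote à K} encode.)

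A second, related omission: your endpoint analysis silently assumes the geodesic eventually sits on one side of a single null orbit $\ell$. A geodesic that leaves every maximal ribbon oscillates, with $\dot u$ vanishing at infinitely many times accumulating at $t_{+}$; your own computation then forces $C=0$ and $\dot u(t_{+})=0$, and such an orthogonal geodesic runs through infinitely many saddle points rather than ``running off along $\ell$,'' so the $|dy/du|\sim 1/|f(u)|$ picture does not apply. Ruling out finite $t_{+}$ there requires the paper's uniform lower bound $t_i\ge 1/\sqrt N$ per piece, which uses bounded curvature in a way your write-up never does. Finally, the appeal to ``absence of Reeb components for a null complete surface'' is misplaced: the cited statement concerns tori, and null complete simply connected surfaces do contain type~II and type~III bands.
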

%Using the correspondance in Theorem \ref{Introduction, Proposition: correspondence between maximal surfaces and étalé manifolds}, giving a maximal Lorentzian surface admitting a Killing field with bounded curvature, up to isometry, is equivalent to giving 
%\begin{enumerate}[itemsep=0pt, parsep=0pt]
%\item an orientable Riemannian manifold $\mathcal{E}$ of dimension $1$ with topology $\mathfrak{T}$, together with a linking structure on it,%together with a local diffeomorphism $\textbf{x} \in C^{\infty}(\mathcal{E},\R)$ defining the Riemannian structure, 
%\item a linking structure %such that $\mathcal{E}$ satisfies a certain property of "weak " geodesic completeness,
%\item a smooth real function defined on $\mathcal{E}$, with bounded second transverse derivative.
%\end{enumerate}
The null completeness of the surface is equivalent to saying that $\mathcal{E}$ satisfies a certain property of "weak " geodesic completeness, defined relative to the linking structure. \\

In the study of geodesic completeness in this paper, special attention is given to geodesics that stay in a maximal ribbon after a certain while. For a fixed ribbon, either they are all complete, or all incomplete, and their completeness is characterized in terms of a condition involving the norm of the Killing field in the given ribbon. With conditions involving this time only the topology of the space of leaves, we give a large class of surfaces where all the geodesics of the surface end in some maximal ribbon, hence in which geodesic completeness is characterized. Among them we have the "small surfaces" (Definition \ref{Définition petites surfaces}) which are maximal surfaces with a finite number of ribbons.

We also give completeness criterions involving both the topological structure of the space of Killing orbits and the geometry of the surface. If $(X,K)$ is a Lorentzian surface with a Killing field $K$, the local diffeomorphism $\textbf{x} \in C^{\infty}(\mathcal{E}_X,\R)$ on the space of leaves induces a Riemannian structure on it by taking the metric $\sigma =\text{d}\textbf{x}^2$. This defines a pseudometric topology on $\mathcal{E}_X$ for which one can consider a stronger completeness condition than the geodesic completeness of the space of leaves, regarded as a Riemannian manifold. In the following results, we say that $\mathcal{E}_X$ is complete if it is complete for this pseudometric topology.
\begin{theorem}
Let $(X,K)$ be a simply connected null complete Lorentzian surface with Killing field $K$. Suppose that the transverse derivative of the norm of $K$ is a bounded function, then $X$ is complete if and only if geodesics orthogonal to $K$ are complete. If in addition  $\mathcal{E}_X$ is complete, then $X$ is complete.
\end{theorem}
\begin{theorem}
Let $(X,K)$ be a simply connected Lorentzian surface with Killing field $K$, such that the norm of $K$ is a bounded function. Suppose that $\mathcal{E}_X$ is complete and contains no infinite chain of branch points, then $X$ is complete. 
\end{theorem}
Here, an infinite chain of branch points is a countable set $\{p_i\}_{i \in I}$ of points in $\mathcal{E}_X$ such that for all $i \in I$, there is $j \in I, j \neq i$, such that $p_i$ and $p_j$ cannot be separated by disjoint neighborhoods. \\

%In the study of geodesic completeness, special attention is given to geodesics that stay in a maximal ribbon after a certain while. For a fixed ribbon, either they are all complete, or all incomplete, and their completeness is characterized in terms of a condition involving the norm of the Killing field in the given ribbon. With conditions involving this time only the topology of the space of leaves, we give a large class of surfaces where all the geodesics of the surface end in some maximal ribbon, hence in which geodesic completeness is characterized. Among them we have the "small surfaces" (Definition \ref{Définition petites surfaces}) which are maximal surfaces with a finite number of ribbons. 
We end this paper by giving various examples illustrating completeness behaviours for the geodesics of a null complete Lorentzian surface, which may appear when omitting certain conditions in the completeness results of the previous paragraphs. We obtain for example a  simply connected surface all of whose geodesics are complete, except timelike geodesics from a certain point.
\\

This paper is organized as follows: in paragraph 2 we introduce the fundamental tools and notions from \cite{BM} dealing with the geometry of compact Lorentzian surfaces with a Killing field, and set up the necessary vocabulary to reading this paper. In paragraph 3, we obtain a description of maximal and simply connected Lorentzian surfaces admitting a Killing field using the space of Killing orbits. %We distinguish the analytic case i.e. the case of the universal extensions, and the general case which includes in particular maximal surfaces and null complete surfaces. 
Paragraph 4 deals with geodesic completeness of these surfaces.

\begin{center}
	\textsc{Acknowledgments}
\end{center}
%\begin{acknowledgements}
The author would like to gratefully acknowledge her thesis advisor Christophe Bavard for his insight, his support and encouragements, and the long hours of discussion he devoted to her, without which this article would not have been completed. She also thanks Pierre Mounoud  for drawing her attention to the existence of the example 3.23 of this paper.
%\end{acknowledgements}
\newpage
\tableofcontents
\section{Structure of Lorentzian surfaces admitting a Killing field}\label{Section préliminaires}
%\subsection{Classification des tores lorentziens munis d'un champ de Killing}

All the material exposed in this section, dealing with the geometry of Lorentzian surfaces with a Killing field, has been investigated in \cite{BM}. For the convenience of the reader, we will, as much as possible, try to detail and motivate certain constructions from \cite{BM} which seem important to us, and which would help to acquire a certain familiarity with the objects discussed in this paper.

Let $(X,K)$ be a Lorentzian surface with a Killing field $K$, which we assume to be complete. 

\begin{definition}[ribbons, bands and dominoes] \label{Définition ruban/domino}
	Let $U$ be a subset of $X$ saturated by $K$. Suppose $K$ never vanishes on $U$; we say that $(U,K)$ is \\
	(1) a ribbon if $U$ is open, simply connected  and if one of the null foliations in $U$ is everywhere transverse to $K$. \\
	(2) a band if $U$ is homeomorphic to $[0,1] \times \R$, with $\langle K,K \rangle$ vanishing on the boundary and not vanishing in the interior of $U$. \\
	(3) a domino if $U$ is open, simply connected, and $K$ has a unique null orbit in $U$.
\end{definition}
The closure of the connected components of $X \smallsetminus \{ \langle K , K \rangle =0\}$ are bands. 

\paragraph{Adapted basis.} 

Let $U$ be a ribbon in $X$, and denote by $\mathcal{L}$ the null foliation in $U$ everywhere transverse to $K$. \\

\textbf{Claim:} Any maximal geodesic $\gamma$ tangent to $\mathcal{L}$ (hence transverse to $K$) cuts all the leaves of $K$ in $U$. Besides, it cuts each leaf of $K$ only once. \\

To see this, choose a null-geodesic $\gamma$ that belongs to $\mathcal{L}$, maximal in $U$. The saturation of the geodesic by the flow of $K$ gives a connected open set $U_{\gamma}$. If $\alpha$ is another such geodesic, either $U_{\alpha}$ and $U_{\gamma}$ are equal or are disjoint open subsets of $U$. Define on $U$ the relation: for all $x, y \in U, x \sim y$ if and only if there exists a geodesic $\gamma$ such that $x, y \in U_{\gamma}$. It follows from what we said before that $\sim$ is an open equivalence relation on $U$. We then have $U=U_{\gamma}$, for $U$ is connected. Since $U$ is simply conected, $\gamma$ cuts every leaf of $K$ only once. If $\gamma$ satisfies $\langle \gamma^{'}, K \rangle =1$ (this scalar product is constant by Clairaut's lemma), we obtain coordinates on which the metric reads 
\begin{align}\label{Base adaptée, écriture locale de la métrique}
	2dxdy + f(x)dy^2,\;\; (x,y) \in I \times \R
\end{align}
where $L=\partial_x$ is a null vector field parameterized by $\langle L , K \rangle =1$, and $K=\partial_y$. We shall call an \textbf{adapted basis} in a ribbon a basis field $\{L,K\}$ like above.  
The coordinate denoted by $x$, which is well defined up to translation, will be called the "transverse coordinate", or simply the $x$-coordinate. Thus, the norm of $K$ in the ribbon is given by $f$ in the $x$-coordinate; it vanishes on the null orbits of $K$ contained in $U$.   \\

%If a null geodesic everywhere transverse to $K$ does not cut all the leaves of $K$, this means that the null foliation to which it belongs is not globally transverse to $K$, hence there is a null leaf of $K$ belonging to this foliation.
So we cannot have a global chart on $X$ as in (\ref{Base adaptée, écriture locale de la métrique}) if and only if none of the two null foliations is everywhere transverse to $K$, which amounts to saying that $K$ has two null orbits belonging to different null foliations. 
The latter fact is characterized in \cite{BM} by the presence of a Reeb component in the foliation defined by the Killing field or that of the orthogonal foliation. 

\begin{lemma}\cite[Lemma 2.8]{BM}\label{Lemme: types de bandes}
	Let  $(B,K)$ be a Lorentzian band. Denote by $\mathcal{L}$ the null foliation in $B$ transverse to one of the two connected components of $\partial B$. If $\mathcal{L}$ is everywhere transverse to $K$ in $B$, then the foliations defined by $K$ and $K^{\perp}$ are both suspensions (admit global transversals). Otherwise, one of them is a Reeb component and the other is a suspension. 
\end{lemma}

Thus, we have the following definition, resulting from the lemma:
\begin{definition}
	A Lorentzian band $(B,K)$ is said to be:\\
	(1) of type I: if the foliations defined by $K$ and $K^{\perp}$ are both suspensions. \\
	(2) of type II: if the foliation of $K$ is a suspension and that of $K^{\perp}$  is a Reeb component. \\
	(3) of type III: if the foliation of  $K$ is a Reeb component and that of $K^{\perp}$ is a suspension.
\end{definition} 
The following figures (2, 3 and 4) represent the three types of bands; the foliation of $K$ is represented in black, and the orthogonal foliation in red. In a type I band, the foliations by $K$ and $K^{\perp}$ are suspensions, i.e. admit a transversal curve cutting every leaf; this is equivalent to the fact that the space of leaves is Hausdorff. In a type II (resp. type III) band, the foliation defined by $K^{\perp}$ (resp. $K$) is a Reeb component, so  the space of leaves is non Hausdorff: it is a simple branching (we give a definition of a simple branching in Definition \ref{Définition: branchement d'ordre n}; see also Figures 5 and 6). 
\begin{figure}[h!] 
	\labellist 
	\small\hair 2pt 
	\pinlabel {$\text{Type I band}$} at 125 -12
	\pinlabel {$\text{Type II band}$} at 532 -12
	\pinlabel {$\text{Type III band}$} at 935 -12
	%	\pinlabel $\gamma_{\infty}$ at 135 392 
	%	\pinlabel $\zeta_{0}$  at 202 386 
	%	\pinlabel $\zeta_{1}$ at 692 393 
	%	\pinlabel $x_0$  at 306 386
	%	\pinlabel $x_1$  at 606 386 
	%\pinlabel $A$ [r] at 125 153
	\endlabellist 
	\centering 
	\includegraphics[scale=0.24]{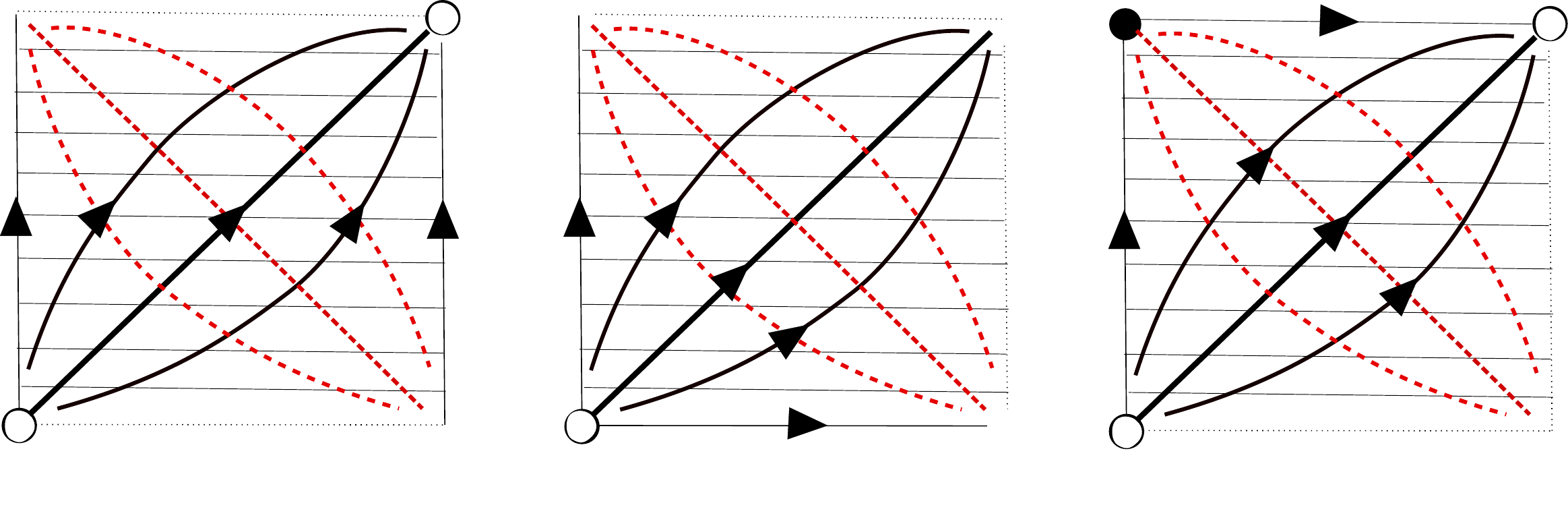} \caption{A type I, type II and type III band.} 
	\label{fig:cobo}
\end{figure} 

\begin{figure}[h!]\label{Figure: The CP torus has 4 bands of type II} 
	\labellist 
	\small\hair 2pt 
	\endlabellist  
	\centering 
	\includegraphics[scale=0.18]{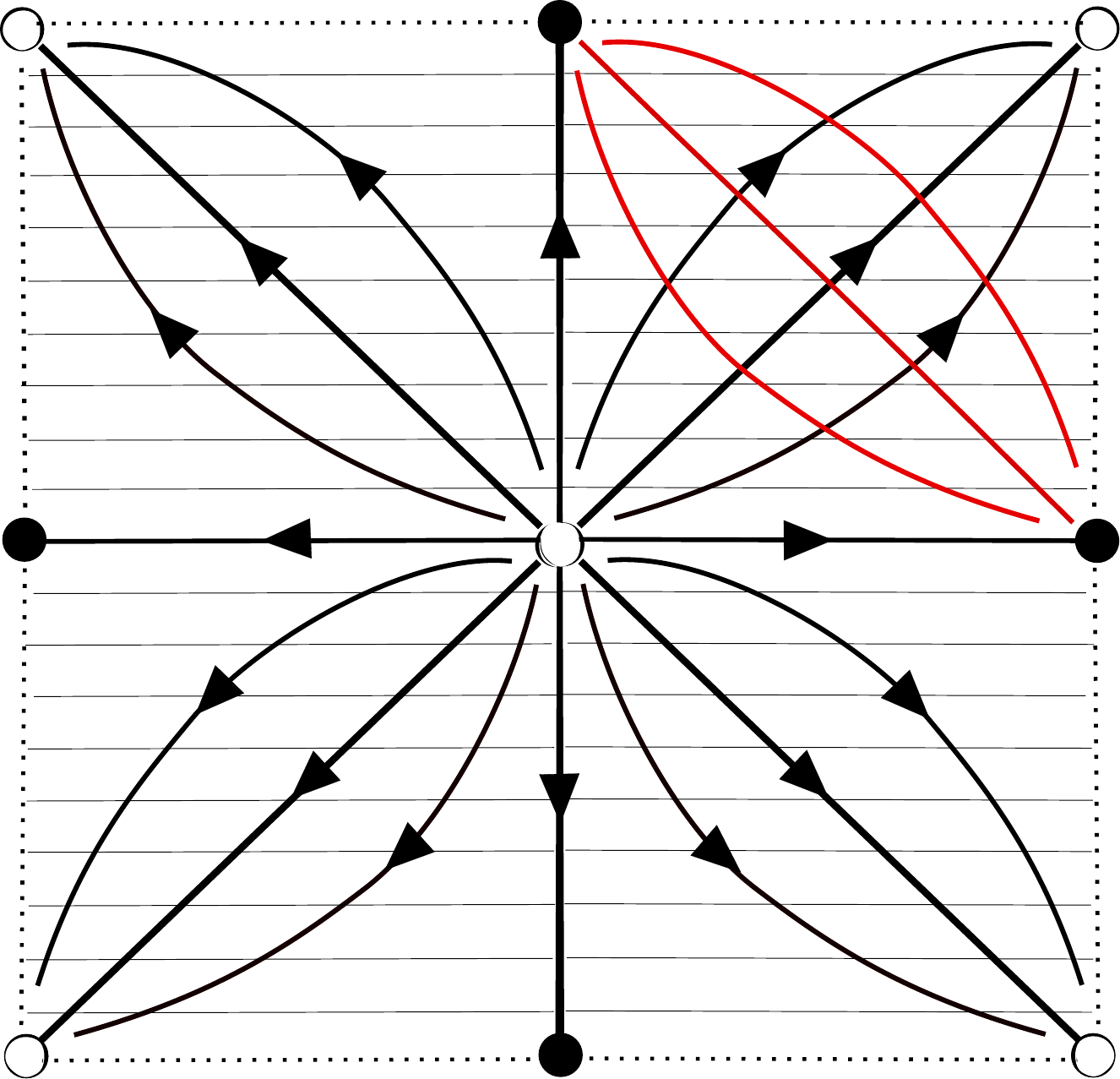} \caption{The Clifton-Pohl torus  has 4 bands of type II} 
	\label{fig:cobo}
\end{figure}

\begin{figure}[h!] 
	\labellist 
	\small\hair 2pt 
	\endlabellist  
	\centering 
	\includegraphics[scale=0.18]{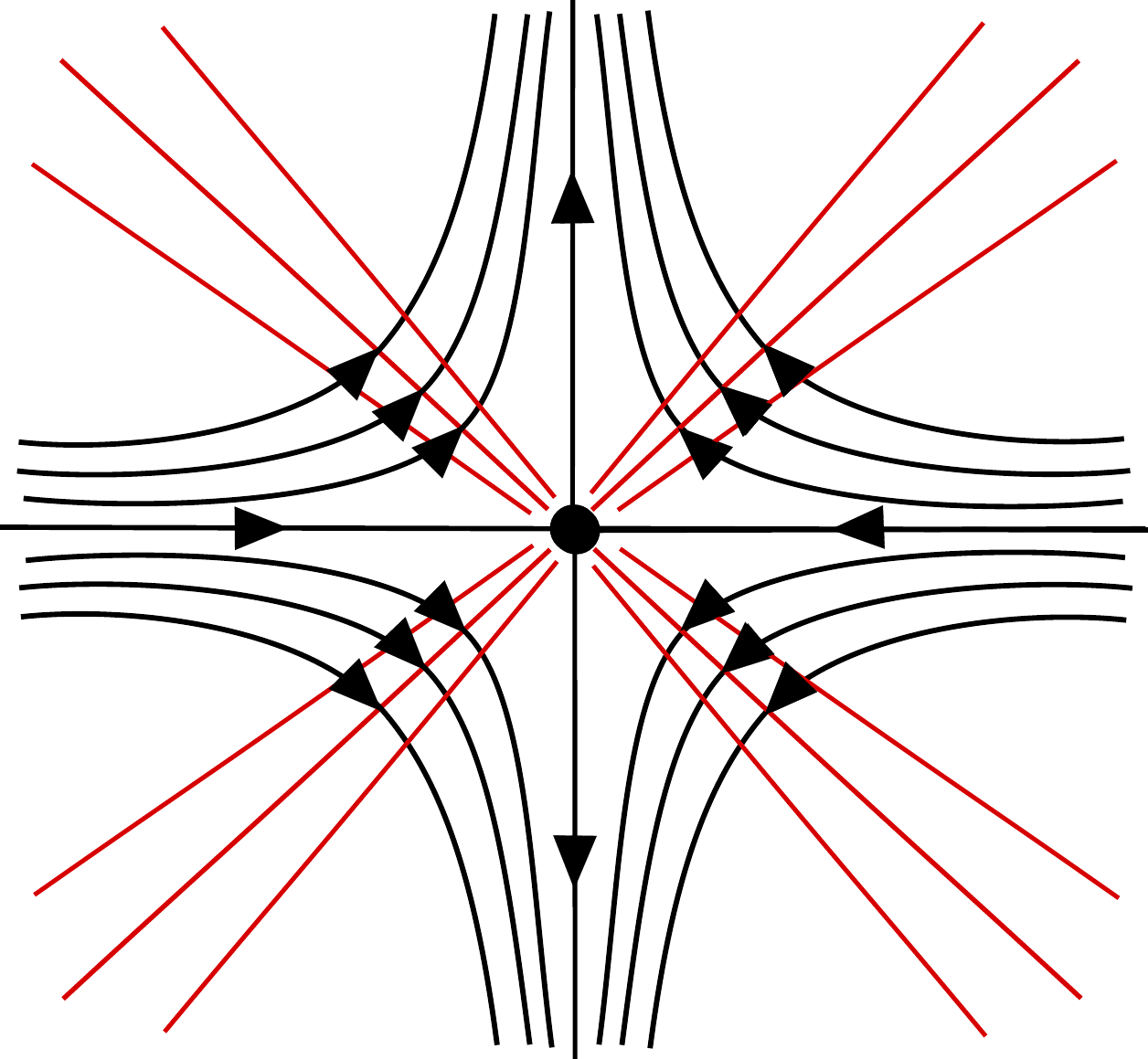} \caption{A saddle with the origin removed contains 4 bands of type III.} 
	\label{fig:cobo}
\end{figure}
%\newpage
\begin{proof}[Proof of Lemma \ref{Lemme: types de bandes}]
	\textbf{Claim 1:} $\mathcal{L}$ is everywhere transverse to $K$ in $B$ if and only if both of the foliations of $K$ and $K^{\perp}$ are suspensions. 
	
\noindent	By definition, $\langle K, K \rangle $ vanishes on the boundary of $B$, but not in the interior, so the boundary of $B$ is made of two null orbits of $K$. Denote by $\partial_1 B$ and $\partial_2 B$ the two connected components of the boundary of $B$. 
	Note that $K$ and $K^{\perp}$ have the same null leaves. Therefore, if $\mathcal{L}$ is everywhere transverse to $K$, then $\mathcal{L}$ is also transverse to $K^{\perp}$ so that the foliations of $K$ and $K^{\perp}$ are both suspensions. We say that $B$ is a type I band. The null leaves in the boundary of a type I band belong to the same null foliation. 
	
	Now, suppose that $\mathcal{L}$ is not globally transverse to $K$ in $B$, then the null leaves of the boundary belong to different null foliations.  Let $\alpha$ be a smooth curve connecting $\partial_1 B$ and $\partial_2 B$. If both of the foliations of $K$ and $K^{\perp}$ are suspensions, then we can take $\alpha$ to be everywhere transverse to both $K$ and $K^{\perp}$. Denote by $E_{\alpha}$ the line bundle over $\alpha$: it is a trivial bundle, so it is homeomorphic to a cylinder in which the null directions determine two disjoint open subsets: one containing the spacelike directions, and the other the timelike directions. The restrictions of $K$ and $K^{\perp}$ to $\alpha$ are smooth sections of $E_{\alpha}$ connecting one null direction of the lightcone to the other. When one is a spacelike section, the other is timelike, and they coincide on the boundary of $E_{\alpha}$. We see that any other continuous section of $E_{\alpha}$ must intersect one of the two sections defined by $K$ or $K^{\perp}$. Therefore, $\alpha$ is necessarily tangent to either $K$ or $K^{\perp}$ at some point of $B$. It follows that one of the two foliations by $K$ or $K^{\perp}$ has a Reeb component (\cite[Proposition 4.3.2]{MR881799}). \\
	
	\textbf{Claim 2:} If the foliation defined by $K$ or $K^{\perp}$ is not a suspension, then it is a Reeb component.
	
\noindent	Indeed, suppose that the foliation defined by $K$ (resp. $K^{\perp}$) has a Reeb component in $B$. A leaf of $\mathcal{L}$ (which is transverse to $\partial_1 B$) cuts all the leaves of $K$ (resp. $K^{\perp}$) in the interior of $B$ transversally. So the foliation of $K$ (resp. $K^{\perp}$) can only have one Reeb component. \\
	
	\textbf{Claim 3:} The two null foliations in $B$ are both suspensions. 
	
\noindent	This is due to the fact that a null geodesic transverse to $\partial_i B$ cuts all the leaves of the null foliation tangent to $\partial_i B$ transversally. \\
	
	\textbf{Claim 4:} If the foliation defined by $K$ (resp. $K^{\perp}$) is a Reeb component, then the orthogonal foliation is a suspension.
	
\noindent	Claim 3 implies the existence of a curve $\beta$ connecting $\partial_1 B$ and $\partial_2 B$, which is everywhere transverse to both the null foliations of $B$. In particular, $\beta$ is either timelike or spacelike. Suppose $K$ is a Reeb component. Then $\beta$ is tangent to $K$  at some point of the interior of $B$. In consequence, in the interior of $B$, $\beta$ has the same causality as $K$, so is everywhere transverse to $K^{\perp}$ in the interior of $B$. Since $\beta$ is also transverse to $K^{\perp}$ on the boundary of $B$, it follows that the foliation of $K^{\perp}$ is a suspension. The proof works exactly the same if we suppose that the foliation of $K^{\perp}$ is a Reeb component instead of that of $K$. 
\end{proof}

\paragraph{The existence of local reflections.}
The presence of a Killing field leads to the existence of local reflections. Let $ U $ be a connected component of $ X \smallsetminus \{\langle K, K \rangle = 0 \} $. In $ U $, the geodesic flow orthogonal to $ K $ can be parameterized by $ \langle K^{\perp}, K \rangle = 0 $, $ \langle K^{\perp}, K^{\perp} \rangle = \pm 1 $. The vector field thus obtained is invariant by $ K $, i.e. $ [K^{\perp}, K] = 0 $. This gives coordinates $ (u, v) \in I \times \R $ on $ U $, in which the metric reads $ \pm du^2 + h(u) dv^2 $, where $ K^{\perp } = \partial_u, K = \partial_v $. We observe that $ (u, v) \mapsto (u, -v) $ is an isometry of $ U $. This reflection sends $ K $ on $ -K $ and permutes the two null foliations. It is called a "\textit{generic reflection}".

So the reflection fixing a non-degenerate geodesic perpendicular to $K$ is an isometry, but unlike the Riemannian case, it is only defined locally when the norm of $ K $ vanishes (see Figure 3 above). In an adapted basis of the form (\ref{Base adaptée, écriture locale de la métrique}), the leaves of the foliation orthogonal to $K$ are defined by $y= G(x) + \beta$, where $G$ is a primitive function of $-1/f$ and $\beta \in \R$ \cite[Proposition 2.5]{BM}. And the generic reflections are given by $\phi(x,y) = (x, 2(G(x)+\beta)-y)$, $(x,y) \in I \times \R$.
\paragraph{An adapted atlas for $(X,K)$.}
If $U$ denotes a ribbon in $X$ and $p \in U$, we can choose a null-geodesic $\gamma$ passing through $p$, maximal in $U$ and transverse to $K$.  On the saturation of the geodesic by the flow of $K$, which is equal to $U$, the metric writes   $$2dxdy + f(x)dy^2,$$ where $L=\partial_x$ is a null vector field parameterized by $\langle L , K \rangle =1$, and $K=\partial_y$.
%The coordinate denoted by $x$, which is well defined up to translation, will be called the "transverse coordinate", or simply the $x$-coordinate. Thus, the norm of $K$ in the ribbon is given by $f$ in the $x$-coordinate; it vanishes on the null orbits of $K$ contained in $U$.  
\begin{remark}(\cite[Lemma 2.25]{BM}).\label{Remarque sur la nature des zéros}
The geodesic parameterization of a null orbit of $K$ is incomplete if and
only if it corresponds to a simple zero of f .
\end{remark}
When $\langle K , K \rangle (p) \neq 0$, there exists another null-geodesic transverse to $K$ and passing through $p$, giving rise to another formula for the metric on an open set $U'$ of $X$. On the intersection $U \cap U'$, the norm of $K$ doesn't vanish: we have by  \cite[Proposition 2.5]{BM} the existence of a generic reflection, i.e. a local isometry fixing a non-degenerate geodesic perpendicular to $K$ and sending $K$ to $-K$. The transition map is given on $U \cap U'$ by composing $(x,y) \mapsto (-x,-y)$ with a generic reflection.  

This gives an atlas of $ X $ minus the zeros of $ K $,  such that on each local chart the metric reads $ 2dxdy + f (x) dy^2 $, with $ K =  \partial_y $, and transition maps are given by generic reflections. This atlas is called an \textbf{adapted atlas} for $(X,K)$. %Of course, there is also a local chart near a zero of $K$, where the metric has the form $ 2dxdy + \alpha(udv -v du)^2 $, and $ K $ is given by $ u \partial_u - v \partial_v $, with $ \alpha $ an invariant function by $ K $. This is called an "exponential map" (see Proposition 2.35, \cite{BM}).
\paragraph{Riemannian structure on the space of Killing orbits.}
Let $(X,K)$ be a simply connected Lorentzian surface with Killing field $K$. Consider a positively oriented hyperbolic basis
$(U,V )$ in $\mathfrak{X}(X)$, the space of smooth vector fields on $X$, i.e. $\langle U,U \rangle=\langle V,V \rangle=0$ and $\langle U,V \rangle=1$. Define a volume form $\nu$ by setting $\nu(U,V)=1$; $\nu$ does not depend on the choice of this basis. Define a 1-form $\omega :=i_K \nu$: this form, which defines the foliation of $K$, is closed, hence exact since $X$ is simply connected. So there exists a submersion $\textbf{x} \in C^{\infty}(X,\R)$ (defined up to translation) such that $\omega=d\textbf{x}$, inducing a local diffeomorphism \footnote{\label{ulm}We use the french terminology and say that a manifold $\mathcal{E}$ is "étalé" in $\R$ if there is a local diffeomorphism from $\mathcal{E}$ into $\R$ (Definition \ref{Définition variété étalée}).} between the space of leaves of $K$ and $\R$.  This makes it into an orientable manifold of dimension $1$ (see \cite[Proposition 2.21]{BM}), on which we can define a Riemannian structure by taking the metric $d\textbf{x}^2$.
%Define locally in each ribbon $\nu = dy \wedge dx$; it is easily seen that $\nu$ is a well defined volume form on $X$. This gives rise to a submersion $\textbf{x}:X \to \R$  defined (up to translation) by $i_K \nu = d\textbf{x}$, inducing a local diffeomorphism \footnote{\label{ulm}We use the french terminology and say that a manifold $\mathcal{E}$ is "étalé" in $\R$ if there is a local diffeomorphism from $\mathcal{E}$ into $\R$ (Definition \ref{Définition variété étalée}).} between the space of leaves of $K$ and $\R$,  making it into a manifold of dimension $1$ (see \cite[Proposition 2.21]{BM}), on which we can define a Riemannian structure by taking the metric $d\textbf{x}^2$.
One can check that the submersion $\textbf{x}$ coincides, up to translation and change of sign, with the $x$-coordinate of any local chart. When $X$ is the universal cover of a non-flat torus, the local diffeomorphism between the space of Killing orbits and $\R$ is a global diffeomorphism, making this space into a Hausdorff manifold of dimension $1$.

\section{Maximal simply connected Lorentzian surfaces with a Killing field}

\subsection{Topological and differential properties of the space of leaves} \label{Section: Espace des feuilles}
Let $(X,K)$ be a simply connected Lorentzian surface admitting a non-trivial Killing field  $K$. 
%On suppose de plus que $X$ n'a pas de selles à l'infini (i.e. toute orbite de lumière du champ est incluse dans une géodésique de lumière complète, cf. Définition 3.2, \cite{BM}). 
Denote by $\mathcal{E}_X$ the space of leaves of the foliation defined by $K$. It is an orientable $1$-dimensional Riemannian manifold with a countable base, generally non-Haussdorf. %The transverse coordinate $\textbf{x} \in C^{\infty}(X,\R)$ induces a local diffeomorphism \footnote{\label{ulm}We use the french terminology and say that a manifold $\mathcal{E}$ is "étalé" in $\R$ if there is a local diffeomorphism from $\mathcal{E}$ into $\R$ (Definition \ref{Définition variété étalée}).} of $\mathcal{E}_X$ on $\R$, making it into a manifold  $\mathcal{E}_X$ of dimension $1$, generally non-Haussdorf, on which we can define a Riemannian structure by taking the metric $\sigma =\text{d}\textbf{x}^2$. 
In this paragraph, we examine the topology and the geometry of this manifold:
\begin{itemize}
	\item What are the possible branch points in $\mathcal{E}_X$ ? Do they satisfy some combinatorial property (existence of global constraints on the set of branch points) ?  
	\item Completeness of $\mathcal{E}_X$ for the induced pseudometric topology (introduced at the end of this paragraph) ?
\end{itemize}
In the elementary case, i.e. when the norm of $ K $ does not vanish, the space of leaves is homeomorphic to the real line; indeed, since $ X $ is homeomorphic to the plane, null cones determine two $1$-dimensional foliations all of whose leaves are homeomorphic to the line, and which are moreover everywhere transverse to $ K $ since the norm of the latter does not vanish. The space of leaves can then be parameterized by a maximal null geodesic. {\em In the following, $ X $ is assumed to be non-elementary}. 

%We will see that some geometric properties of the surface have consequences on the topology of the space of Killing orbits; we distinguish in particular the following cases:$X$ maximal; X null complete; X analytic and maximal.
%\begin{enumerate}[itemsep=0pt, parsep=0pt]
%	\item $X$ maximal;
%	\item $X$ null complete;% c'est-à-dire maximale et sans selles à l'infini (i.e. toute orbite de lumière du champ est incluse dans une géodésique de lumière complète, cf. Définition 3.2, \cite{BM});
%	\item $X$ analytic and maximal. 
%\end{enumerate}
%Recall that a null complete surface admitting a Killing field is maximal and has no saddle at infinity (i.e. any null orbit of $K$ is contained in a complete null geodesic, see \cite[Definition 3.2]{BM}).
%Notons ici encore $f \in C^{\infty}(X,\R)$ la fonction définie par $f:=\langle K,K \rangle $, et notons $\tilde{f}$ la fonction induite sur $\mathcal{E}_X$.  
\paragraph{The branch points of the space of leaves} A point $p$ on $\mathcal{E}_X$ is said to be a "branch point" if there is a point $q \neq p$ such that $p$ and $q$ cannot be separated by disjoint neighborhoods in $\mathcal{E}_X$. Denote by $ B $ the set of branch points, and $ \Sigma $ the set of connected components of the interior of $ \mathcal{E}_X \smallsetminus B $. 
\begin{definition}\label{Définition: point de branchement simple}
	A branch point $p$ of $\mathcal{E}_X$ is said to be a "simple branch point" if the set $V_p$ of points $q \neq p$ such that $p$ and $q$ cannot be separated, satisfies one of the two following properties:\\
	- $V_p$ contains only one point;\\
	- $V_p$ contains exactly two distinct points that can be separated.
\end{definition}
\begin{definition}\label{Définition: branchement d'ordre n}
	%- Un branchement d'ordre $2$ est un cycle d'ordre $2$ de points de branchement simples, i.e. deux points $(p_i)_{i \in \Z/2\Z}$ tels que pour tout $i \in \Z/2\Z$, $p_i$ est un point de branchement simple, avec $V_{p_i}=\{p_{i+1}\}$.\\
	1) A simple branching is the union of a pair of simple branch points  	$(p_i)_{i \in \Z/2\Z}$ such that for $i \in \Z/2\Z$, $p_{i+1} \in V_{p_i}$, and the element $\sigma \in \Sigma$ such that $\forall i \in \Z/2\Z,\, p_i \in \bar{\sigma}$. We denote it by $(p_0,p_1,\sigma)$.\\
	2) A branching of order $n \in \N_{\geq 2}$ is a cycle of order $n$ of simple branchings $\{(p_i,p_{i+1},\sigma_i^{i+1}), i \in \Z/n\Z\}$, where the family $\{\sigma_i^{i+1}$, $i \in \Z/n\Z\}$ contains pairwise distinct elements. %In other words, it is a family of points $(p_i)_{i \in \Z/n\Z}$ such that for all $i \in \Z/n\Z$, $p_i$ is a simple branch point, with $V_{p_i} = \{p_{i-1}, p_{i+1}\}$ and $p_{i-1}$, $p_{i+1}$ can be separated.
\end{definition}
\begin{figure}[h!] 
	\labellist 
	\small\hair 2pt 
	\pinlabel $p_1$ at 586 231 
	\pinlabel $p_2$ at 749 238
	\pinlabel $p_3$ at 675 126
	%	\pinlabel {$\text{Type III band}$} at 935 -29
	%	\pinlabel $\gamma_{\infty}$ at 135 392 
	%	\pinlabel $\zeta_{0}$  at 202 386 
	%	\pinlabel $\zeta_{1}$ at 692 393 
	%	\pinlabel $x_0$  at 306 386
	%	\pinlabel $x_1$  at 606 386 
	%\pinlabel $A$ [r] at 125 153
	\endlabellist 
	\centering 
	\includegraphics[scale=0.25]{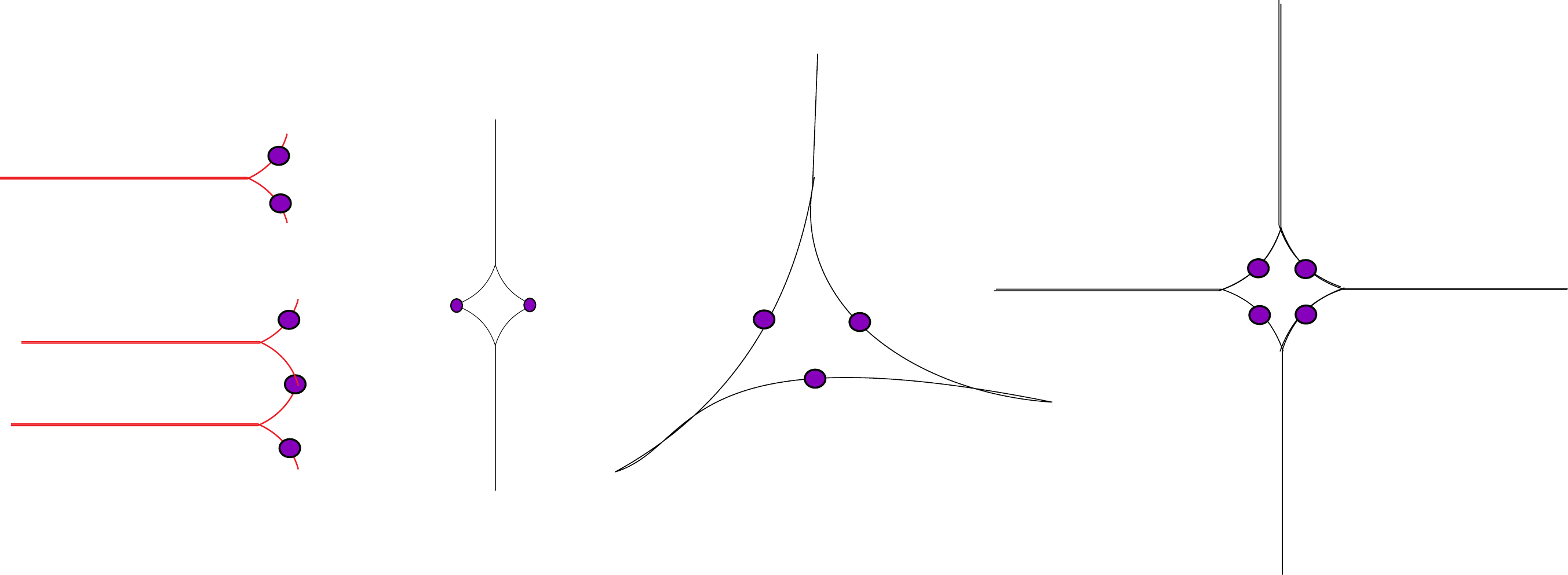} \caption{A simple branching; a chain of simple branchings;\; branchings of order $2$, $3$ and $4$} 
	\label{fig:cobo}
\end{figure} 

We also define an "{\em infinite (resp. finite) chain of simple branchings}", for which the simple branch points -in infinite (resp. finite) number- are indexed by $\Z$ (resp. a finite subset of $\Z$) instead of $\Z/n\Z$.\\

%Dans la suite nous supposerons que $X$ est maximale sans zéro du champ.
The possible branch points in the space of leaves are described in \cite[Paragraph 4.1]{BM}. First, we give an exposition of what is known in \cite{BM} about this manifold, before we can fully describe its topology. The branch points of $ \mathcal{E}_X $ correspond precisely to the null orbits of $ K $ bordering the squares of $ X $; in other words, to the boundaries of the connected components of $ \{\langle K, K \rangle \neq 0 \} $ in $ X $. 
\begin{definition}\label{Definition null band - null component}
	Let $(X,K)$ be a simply connected Lorentzian surface with Killing field $K$. A {\em null band} of $X$ is a connected component of the interior of the set $\{<K,K>=0\}$. The space of leaves of a null band will be referred to as a {\em null component} of $\mathcal{E}_X$. The null components have Hausdorff closure.
\end{definition}
\textbf{Notation:} Let $(X,K)$ be a simply connected Lorentzian surface with a Killing field $K$. We denote by $ \Sigma_1 $ (resp. $\Sigma_0$) the subset of $ \Sigma $ of elements with non Hausdorff (resp. Hausdorff) closure.\\   

We define the following property ($\mathcal{P}$) on $\mathcal{E}_X$:\\
If $\sigma \in \Sigma_1$, then $\bar{\sigma}$ is a closed interval with exactly one or two pairs of non-separated points in the boundary.\\

When $ X $ is a \textbf{maximal} surface, $\mathcal{E}_X$ satisfies the property ($\mathcal{P}$), and $\Sigma$ is the union of

\begin{itemize}
	\item intervals $I_i, i \in I$, such that $\{\langle K,K \rangle \neq 0 \} = \amalg_{i \in I} I_i$ and
	$$
	\overline{I_i}^{\mathcal{E}_X} = \left\{
	\begin{array}{ll}
	\includegraphics[scale=0.17]{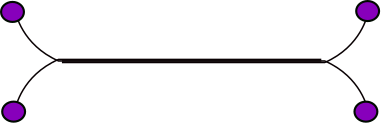} & \mbox{The space of leaves of a square} 
	\\
	\mbox{or} & \\
	\includegraphics[scale=0.17]{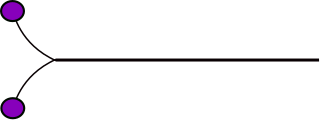} & \mbox{The space of leaves of a type III band near infinity}
	\end{array}
	\right.
	$$
	
	\item intervals $J_j, j \in J$, such that $\displaystyle \overset{\circ}{\overbrace{\{\langle K,K \rangle =0\}}}= \amalg_{j \in J} J_j$, and 
	$$
	\overline{J_j}^{\mathcal{E}_X} = \left\{
	\begin{array}{ll}
	\includegraphics[scale=0.18]{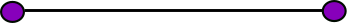} & \mbox{The closure of a null component} 
	\\
	\mbox{or} & \\
	\includegraphics[scale=0.18]{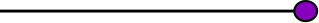} & \mbox{The closure of a null component near infinity}
	\end{array}
	\right.
	$$
\end{itemize}

\begin{figure}[h!] 
	\labellist 
	\small\hair 2pt 
	\endlabellist 
	\centering 
	\includegraphics[scale=0.22]{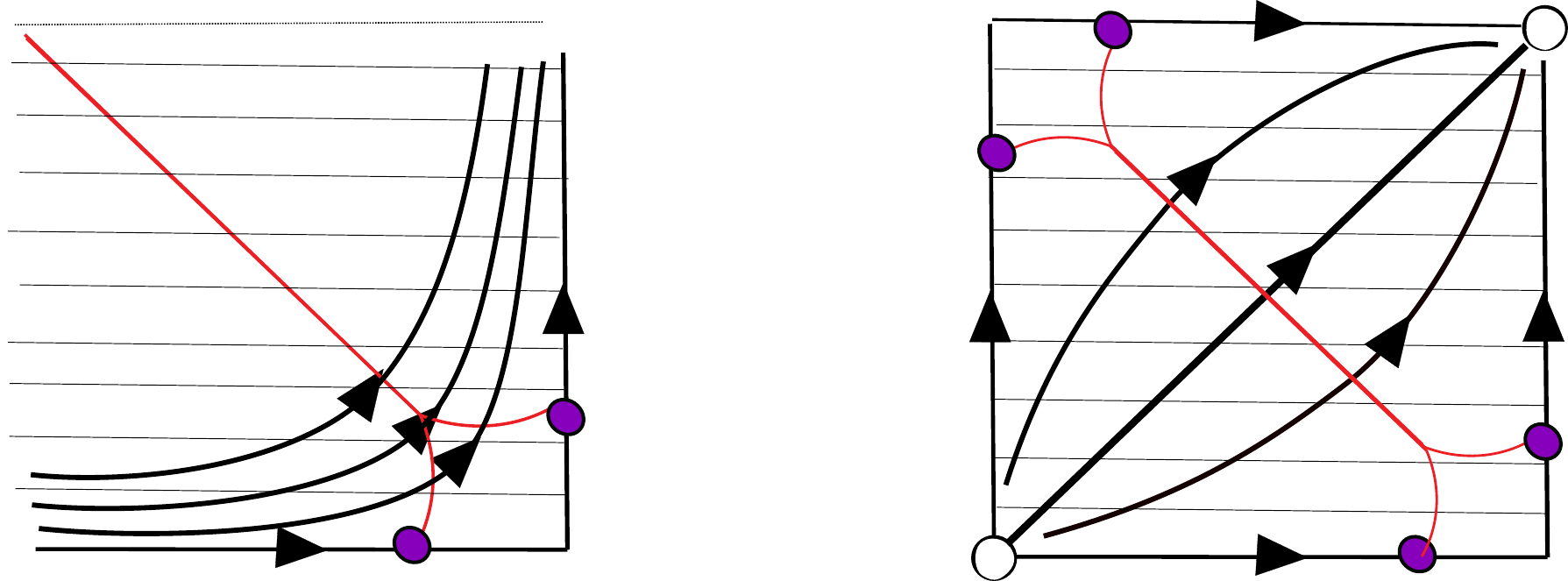} \caption{The space of leaves of a type III band (on the left) and a square (on the right)} 
	\label{fig:cobo}
\end{figure} 

This is equivalent to saying that when $ X $ is maximal, and if $ U $ is a connected component of $ X \smallsetminus \{\langle K, K \rangle = 0 \} $, then the closure of $ U $ in $ X $ minus the zeros of $ K $ is either a Lorentzian square or a type III band; this is proved in \cite[Proposition 2.20]{BM}. In particular, when $X$ is maximal, an element of $\Sigma_0$ is the space of leaves of a null band of $X$, and an element of $\Sigma_1$ is the space of leaves of a connected component of $ X \smallsetminus \{\langle K, K \rangle = 0 \} $. Note that if $ X $ is not maximal, an element of $\Sigma_0$ is the space of leaves of a type I band or a type II band, half a band, or a null band of $X$.

These segments can accumulate, and the set of branch points $B$ is not closed in general. 

Consider a null orbit of $ K $ which is the border between two adjacent squares in $X$. Denote by $ f $ the function induced by the norm of $K$ in the ribbon containing this orbit. The following figure shows how the segments corresponding to squares are connected to each other in the space of leaves, depending on whether $ f $ changes sign or not.

\begin{figure}[h!]
	\labellist 
	\small\hair 2pt 
	\pinlabel {$f \text{\;\;changes sign}$} at 860 350 
	\pinlabel {$f \text{\;\;does not change sign}$} at 900 21 
	\endlabellist 
	\centering 
	\includegraphics[scale=0.21]{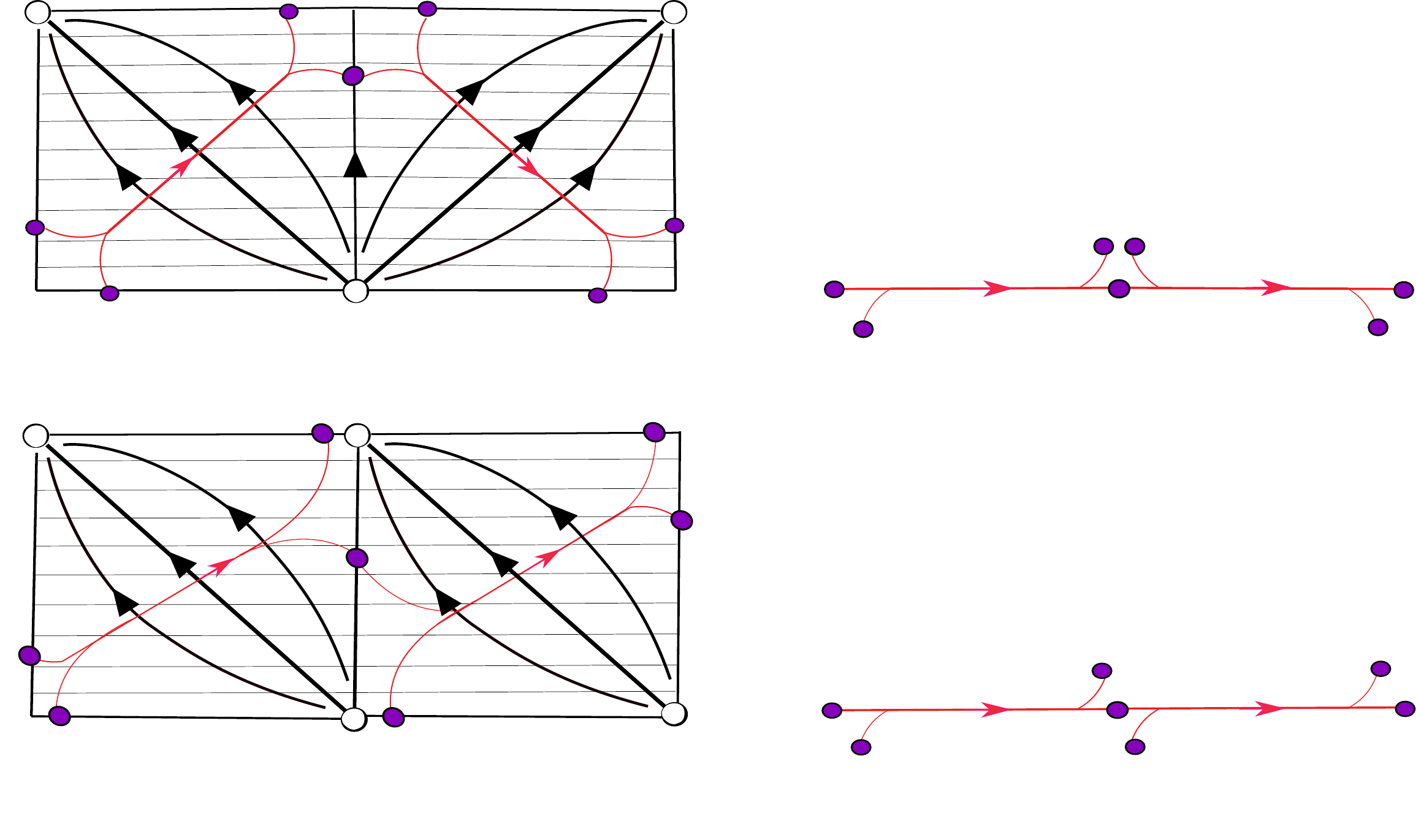} \caption{Connecting two squares in the space of leaves} 
	\label{fig:cobo}
\end{figure}\label{figure: raccordement}
\begin{figure}[h!] 
	\labellist 
	\small\hair 2pt 
	%	\pinlabel {$f \text{\;\;change de signe}$} at 860 350 
	%	\pinlabel {$f \text{\;\;ne change pas de signe}$} at 900 21 
	\endlabellist 
	\centering 
	\includegraphics[scale=0.24]{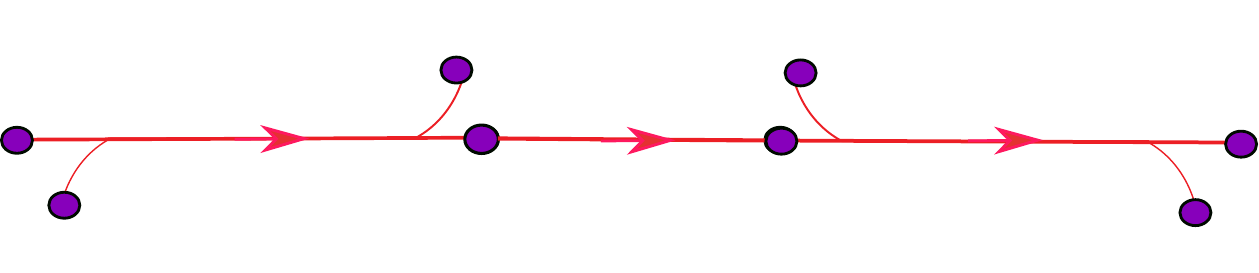} \caption{Presence of a null-component in the space of leaves of a maximal surface} 
	\label{fig:cobo}
\end{figure} 
\newpage
From a topological point of view, the spaces of leaves represented in Figure 7 are the same. The only consideration of the space of leaves is therefore not sufficient to characterize the foliation of $K$. For example, to a manifold with $ 3 $ simple branch points, one can associate two topological conjugation classes of non-oriented foliations of the plane (see \cite{MR0336755}; in this paper, Godbillon gives a topological classification of foliations of the plane whose space of leaves have a finite number of branch points).
 
\begin{figure}[h!] 
	\labellist 
	\small\hair 2pt 
	%	\pinlabel {$f \text{\;\;change de signe}$} at 860 350 
	%	\pinlabel {$f \text{\;\;ne change pas de signe}$} at 900 21 
	\endlabellist 
	\centering 
	\includegraphics[scale=0.31]{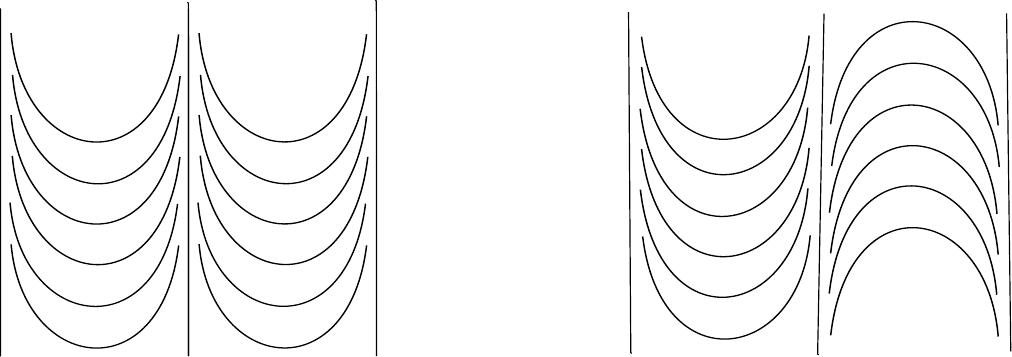} \caption{Two topological conjugation classes of foliations of the plane associated to a manifold with $3$ simple branch points}
\end{figure} 

\begin{definition}\label{Définition variété étalée}
	A smooth manifold $ \mathcal{E} $ of dimension $ 1 $ is said to be "étalé in $ \R $" \, if there is a local diffeomorphism of $ \mathcal{E} $ into $ \R $.	 
\end{definition}

Given an oriented manifold of  dimension $ 1 $ étalé in $ \R $, we distinguish in the following definition two types of non-separation for the branch points. 
\begin{definition}
	Let $ \mathcal{E} $ be a manifold of dimension $ 1 $ étalé in $ \R $ by a local diffeomorphism $ \textbf{x} \in C^{\infty}(\mathcal{E}, \R) $. Let $ p $ and $ q $ be two non-separated points of $ \mathcal{E} $. We say that $p$ and $q$ are not right-separated (resp. left-separated) and we write $ p \, \mathcal{R}_r \, q $ (resp. $p\, \mathcal{R}_l\, q$) if, given two neighborhoods $ V_p $ and $ V_q $ of $ p $ and $ q $ respectively, $\textbf{x}(V_p \cap V_q)$ is on the right (resp. on the left) of $x_0:=\textbf{x}(p)=\textbf{x}(q)$. 
\end{definition}
It is not difficult to see that $ \mathcal{R}_r $ and $ \mathcal{R}_l $ are equivalence relations on $B$. For every $ p \in B$, denote by $ [p]_r $ (resp. $ [p]_l $) the equivalence class of $ p $ for the relation $ \mathcal{R}_r $ (resp. $ \mathcal{R}_l $).

Let $ (X, K) $ be a simply connected non-elementary Lorentzian surface with a non-trivial Killing field $ K $. The space of leaves $ \mathcal{E}_X $ is a (connected)  $1$-dimensional manifold étalé in $\R$, whose non-separated points are simple branch points. Therefore, the equivalence classes defined above have cardinality $ 1 $ or $ 2 $.

A finite maximal chain of simple branchings in $\mathcal{E}_X$ contains two points $p_1$ and $p_2$ such that  $\# [p_i]_r=1$ or $\# [p_i]_l=1$, $i \in \{1,2\}$. These points will be called the boundaries of the finite chain. 
%We  say that an infinite chain of simple branchings is bounded on the right (resp. on the left) if it contains a branch point $ p $ such that $ \# [p]_r = 1 $ (resp. $ \# [p]_l = 1$); this point is then called the "right-boundary" (resp. "left-boundary") of the chain. It is a finite chain if and only if it is bounded on both sides.   
\begin{fact}\label{Remarque: point séparé à droite --> non isolé}
	If $p$ is a branch point of $\mathcal{E}_X$ such that $\# [p]_r = \# [p]_l=2$, then the corresponding null orbit of $K$  is isolated in the closed set $\{\langle K,K \rangle =0\}$, for it borders two bands in $X$; the converse is true if $ X $ is assumed to be maximal. 
\end{fact}
\begin{proof}
	Let $(X,K)$ be a maximal Lorentzian surface with a Killing field $K$. Consider $ U $ a Lorentzian domino contained in $ X $. In an adapted basis, the metric in $U$ reads $2dx dy + f(x) dy^2$, with $f \in C^{\infty}(I,\R)$. Translating $x$ if necessary, set $ x_0 = 0 $ in the unique zero of $ f $ in $ I $. Fix  $U_0 = ]-m,0[\times \R$ a component of $U\smallsetminus \{\langle K,K \rangle=0\}$, where $ m \in \R$ is the length of the segment $\mathcal{E}_{U_0}$.
	We have another local chart in $U_0$ induced by the other null foliation, where the metric reads $2 dx' dy' + \hat{f}(x') dy'^2$, with $x'=-x$ and $\hat{f}(x')=f(x)$ for $x'\in ]0,m[$. It is clear that $ \hat{f} $ extends into a function $ g $ defined over an interval $]m',m[$, $m'<0$, since $ \hat{f} $ defines the same germ at $  0 $ as $ f $ at $  0 $.
	We can then define a proper extension of $ U $ by gluing a ribbon $ R_g $ along $ U_0 $ using a reflection of $ U_0 $. Thus, if $ X $ contains a domino $ U $, it also contains, by maximality, an extension of $ U $ of the previous form. Denote by $p$ the branch point in $\mathcal{E}_X$ corresponding to the (isolated) null orbit of $K$ in $U$, this yields $\# [p]_r = \# [p]_l=2$ (see the Figure 10 below).
	\begin{figure}[h!] 
	\labellist 
	\small\hair 2pt 
	%	\pinlabel {$f \text{\;\;change de signe}$} at 860 350 
	%	\pinlabel {$f \text{\;\;ne change pas de signe}$} at 900 21 
	\endlabellist 
	\centering 
	\includegraphics[scale=0.15]{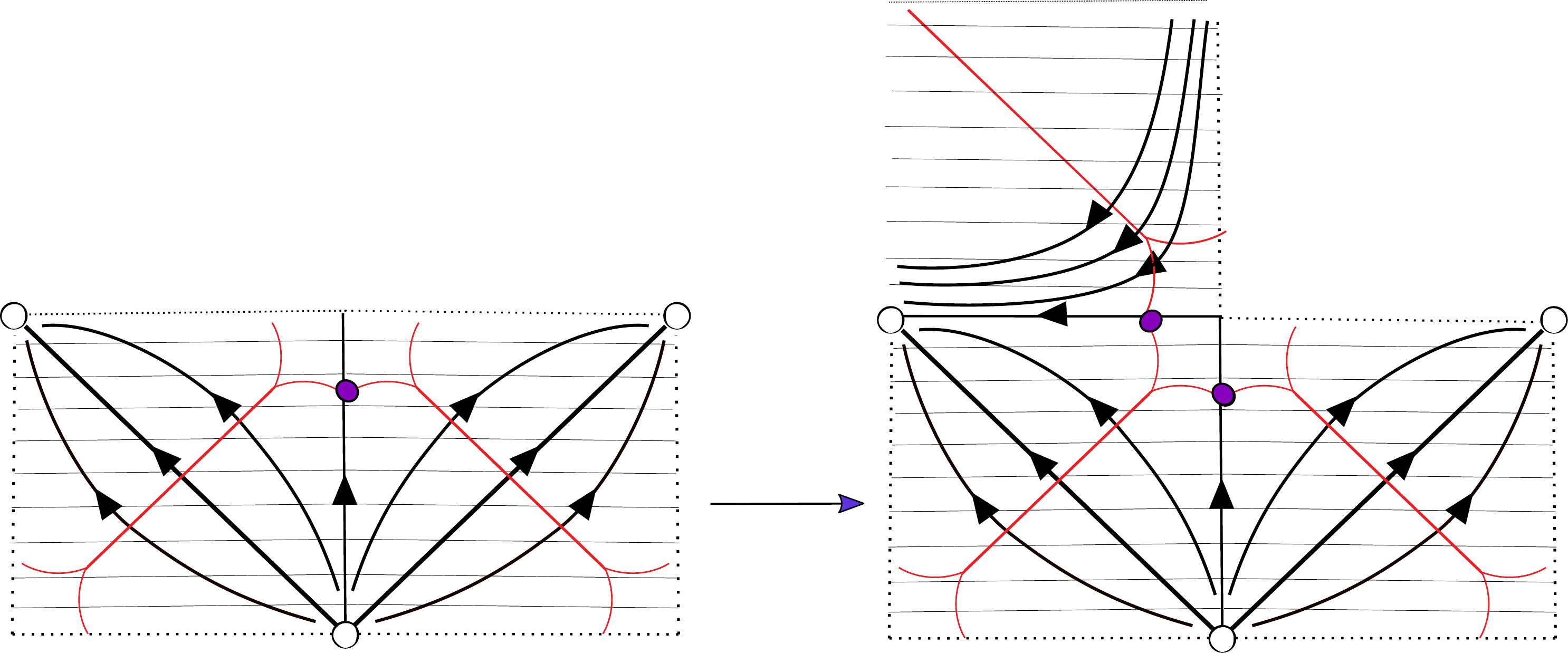} \caption{Extension of a domino (on the left) by means of a local reflection. So when $X$ is maximal, the branch point $p$ corresponding to the null orbit of $K$ in the domino satisfies $\# [p]_r = \# [p]_l=2$.}
\end{figure}	
\end{proof}

If $X$ is maximal, and if $ p \in B$ satisfies $ \# [p]_r = 1 $ or $ \# [p]_l = 1 $, then the corresponding null orbit $ l_p $ in $ X $ is not isolated in the closed set $\{\langle K, K \rangle = 0 \}$. So if the norm of $K$ is identically zero from one side of $l_p$, then $ p $ is in the boundary of a null component of $ \mathcal{E}_X $. If $ \# [p]_r = 1 $ (resp. $ \# [p]_l = 1 $) and if the norm of $K$ is not identically zero on the right (resp. on the left) of $ p $, then in the ribbon containing $ l_p $ there are bands which accumulate on $ l_p $, so that $ p $ is a limit point of $B$. In particular, we have the following fact: %This leads us to define the following property \textbf{($ \mathcal{P} $)}: \\
\begin{fact}\label{Fact le bord d'une chaine finie se trouve dans une null component}
If $X$ is maximal and $B$ is locally finite, and $ p \in B$ satisfies $ \# [p]_r = 1 $ or $ \# [p]_l = 1 $, then $p$ is in the boundary of a null component.
\end{fact}

\paragraph{The simply connected metric space associated to the space of leaves}

Le us first introduce some vocabulary related to non-Hausdorff spaces. 
\begin{definition}
	A \textbf{pseudometric space} $(E,\delta)$ is a set $E$ together with a non-negative real-valued function $\delta : E \times E \to \R^{+}$ (called a pseudometric) such that for every $x, y, z \in E $,
	\begin{itemize}[itemsep=0pt, topsep=0pt, parsep=0pt]
		\item $\delta (x,x) =0$,
		\item $\delta(x,y)=\delta(y,x)$ (symmetry),
		\item $\delta (x,z) \leq \delta (x,y) + \delta(y,z) $ (triangle inequality).
	\end{itemize}
\end{definition} 

\noindent In a pseudometric space, one may have $\delta(x,y)=0$ for $x \neq y$. \\

The \textbf{pseudometric topology} is the topology generated by the open balls ${\displaystyle B_{r}(p)=\{x\in E \mid \delta(p,x)<r\},}$ which form a basis for the topology.

The definition of Cauchy sequences for metric spaces carry over to pseudometric spaces in the same way.
\begin{definition}
	A pseudometric space in which every Cauchy sequence has a limit is a complete pseudometric space.	
\end{definition} 
Given a pseudometric space $(E,\delta)$, one may define an equivalence relation on $E$ by setting for all $x,y \in E, x \sim y $ if $\delta(x,y)=0$. The topological space $E/\sim$ is the \textbf{Kolmogorov quotient}. This space can be made into a metric space by setting $d(\pi(x),\pi(y)) :=  \delta(x,y)$, for all $x, y \in E$, where $\pi: E \to E/\sim$ is the quotient map.  $(E/\sim,d)$ is called the metric space induced by the pseudometric space $( E , \delta )$; the quotient topology on $E/\sim$ coincides with the metric topology. \\ 

Let $(X,K)$ be a simply connected Lorentzian surface with a Killing field $K$. We can define the following binary relation on $ \mathcal{E}_X $: for $ p, q \in  \mathcal{E}_X, p \mathcal{R} q $ if and only if $ p $ and $ q $ cannot be separated. Obviously, $ \mathcal{R} $ is not an equivalence relation for it is not transitive; we then define $ \widetilde{\mathcal{R}} $ as the equivalence relation generated by $ \mathcal{R} $.

The Riemannian structure on the space of leaves $\mathcal{E}_X$ induces a pseudometric on this space that we denote by $\delta$. It is defined  in the following way: for two points $p$ and $q$ on $ \mathcal{E}_X$, $\delta(p,q)$ is the minimum length among the curves joining these points. What we get is not a metric but a pseudometric: it satisfies the axiom of symmetry and the triangle inequality, but not the axiom of separation. Denote by $\mathcal{T}_1$ the quotient topology on $\mathcal{E}_X$ (regarded as the quotient of the topological space $X$) and $\mathcal{T}_2$ the pseudometric topology. $\mathcal{T}_1$ is a strictly finer topology than $\mathcal{T}_2$: for non Hausdorff $\mathcal{E}_X$, $\mathcal{T}_2$ is not Kolmogorov.  The Kolmogorov quotient of $(\mathcal{E}_X, \mathcal{T}_2)$ is a metric space (in particular Hausdorff), and the equivalence relation $p  \sim q$ if and only if $\delta(x,y)=0$ coincides with $  \widetilde{\mathcal{R}} $. 

%Preuve de l'équivalence: The fact that $\delta(p,q)=0$ if $p \mathcal{R} q$ comes from the fact that $\mathcal{T}_2 \subset \mathcal{T}_1$, so that two points which are not separated for the quotient topology are not separated for the pseudometric topology. For the converse, take two points $p$ and $q$ such that $\delta(p,q)=0$. If the orbits of $K$ corresponding to $p$ and $q$ are in the same ribbon, then it is clear that $\delta(p,q)=0$ implies $p=q$. Otherwise, there is a finite sequence of ribbons $R_1,..,R_n$ such that $p \in R_1, q \in R_2$, and $R_i \cap R_{i+1}$ is a the interior of a band. If $p$ (or $q$) is not a branch point, then any piecewise smooth geodesic that joins $p$ and $q$ has length $> \alpha$, with $\alpha:=d(p,p')$, where $p'$ is a branch point... 

%The topological identification that corresponds to the quotient of $(\mathcal{E}_X,\mathcal{T}_2)$ by the equivalence relation $p\sim q$ if ${\displaystyle \delta(p,q)=0}$ is the Kolmogorov quotient. 
\begin{definition}
	We define $ \mathcal{G}(\mathcal{E}_X): = \mathcal{E}_X / \widetilde{\mathcal{R}} $ to be the Kolmogorov quotient of $(\mathcal{E}_X, \mathcal{T}_2)$. 
\end{definition}
%\begin{definition}
%	We define $ \mathcal{G}(\mathcal{E}_X): = \mathcal{E}_X / \widetilde{\mathcal{R}} $ to be the quotient space of $\mathcal{E}_X$ by the equivalence relation $\widetilde{\mathcal{R}}$. 
%	We call it "the separated graph of $ \mathcal{E}_X $".  
%\end{definition}
%Actually,  
Let $(X,K)$ be a simply connected Lorentzian surface with a Killing field $K$. If there is no zero of $ K $,  then $K$ induces a non singular foliation of the plane, and $ \mathcal{E}_X $ is simply connected by \cite[Proposition 1 p. 121]{MR0089412}. If $p \in X$ is a zero of $K$, then $p$ is a saddle point of $K$ connecting $4$  null orbits of $K$ (see the proof of  \cite[Proposition 2.20]{BM}), so that $\mathcal{E}_X$ is a $4$ order branching in a neighborhood of $p$. In this case, $\mathcal{E}_X$ is not simply connected (see \cite[Definition 1 p. 112]{MR0089412} for the definition of simple connectedness in the case of non-Hausdorff manifolds). \\

In the sequel, a simple cycle of geodesics of $\mathcal{E}_X$ is a sequence of geodesics  $\gamma_0, \gamma_1, \gamma_2, . . . ,\gamma_n = \gamma_0$, $n \geq 3$, such that $\gamma_i \cap \gamma_j \neq \emptyset$ if and only if $|i-j|=1$, with $ i,j \in \Z/n\Z$. 
\begin{proposition}\label{Caractérisation topologique de la variété des feuilles}
	Let $ (X, K) $ be a simply connected Lorentzian surface with a non-trivial Killing field $ K $. Then $ \mathcal{G}(\mathcal{E}_X)$ is a real tree, and $ \mathcal{G}(\mathcal{E}_X)$, with the quotient topology of $\mathcal{T}_1$, is Hausdorff. %	We call it "the separated tree of $ \mathcal{E}_X $".  % that could admit vertices of infinite valence, as well as infinite branches.
\end{proposition}
\begin{proof}
	 By definition, $\mathcal{G}(\mathcal{E}_X)$ is the metric space induced by the pseudometric space $(\mathcal{E}_X, \delta)$. So all we have to prove is that given any two points of it, there is a unique (injective) path joining them.  Let $\alpha: [0,1] \to \mathcal{E}_X$ be a $\mathcal{T}_2$-continuous injective path such that $\alpha(0) \sim \alpha(1)$. Since $ \mathcal{E}_X$ is path connected, there is a $\mathcal{T}_2$-continuous path $r$ joining $\alpha(0)$ and $\alpha(1)$. The union $\alpha \cup r$ can be covered by a finite number of geodesics, such that every geodesic is the space of leaves of a ribbon in $X$. This defines a simple cycle of geodesics of $\mathcal{E}_X$ corresponding to ribbons, and such a cycle is necessarily a  branching of order $ 4 $ (this is a consequence of \cite[Lemma 3.13]{BM}). This proves that $\alpha(t) \sim \alpha (0), \forall t \in [0,1]$, implying that $\mathcal{G}(\mathcal{E}_X)$ is a real tree. The topology on $\mathcal{G}(\mathcal{E}_X)$ induced by $\mathcal{T}_1$ is finer than the metric topology (and it may be strictly finer than the latter), so $\mathcal{G}(\mathcal{E}_X)$ is again Hausdorff for the quotient topology of $\mathcal{T}_1$. 
\end{proof}
From all the previous study, we can write the following conclusion: let $(X,K)$ be a simply connected Lorentzian surface with a non-trivial Killing vector field $K$. Then:
\begin{itemize}
	\item $\mathcal{E}_X$ is a (connected) étalé manifold of dimension $1$ with a countable base (usually non Hausdorff), whose non-separated points are simple branch points,
	\item Finite order branchings of $\mathcal{E}_X$ are of order $4$,
	\item $\mathcal{G}(\mathcal{E}_X)$ is a real tree,
	\item If $X$ is besides maximal, then $\mathcal{E}_X$ satisfies the property ($\mathcal{P}$).
\end{itemize}
\begin{remark}\label{Remarque: condition to be a real tree}
	Let $ \mathcal{E} $ be a manifold of dimension  $ 1 $, étalé in $ \R $, such that the branch points of $\mathcal{E}$ are simple branch points. Then $\mathcal{G}(\mathcal{E}_X)$ is a real tree if and only if the only simple cycles of geodesics of $\mathcal{E}$ are  branchings of order $n$. 
\end{remark}
\begin{remark}
$(\mathcal{E}_X, \mathcal{T}_2)$ is in general not locally compact. If $B$ is $\mathcal{T}_2$-locally finite, then $\mathcal{E}_X$ is $\mathcal{T}_2$-locally compact. The converse may not be true. 

% if and only if it does not contain an infinite cycle of branchings. %This is the same for the real tree $ \mathcal{G}(\mathcal{E}_X) $ with either of the two topologies.   
\end{remark}

\begin{remark}\label{Remarque sur les zéros de f _structure différentiable}
	If $ (X, K) $ is a Lorentzian surface with Killing field $ K $, the function $ F \in C^{\infty}(\mathcal{E}_X, \R) $ induced by the norm of $K$ vanishes at all the branch points of $ \mathcal{E}_X $, and satisfies the following property: if $ F $ is of rank $ 1 $ (resp. of rank $ 0 $) at a point $ p $ of $ \mathcal{E}_X $, then it is also of rank $ 1 $ (resp. of rank $ 0 $) at every point $ q $ such that $\delta(p,q)=0$.  This is a consequence of the fact that there is a local isometry (the local reflections) in every connected component $U$ of $\{\langle K,K \rangle \neq 0\}$ that permutes  branch points $p$ and $q$ of $\overline{\mathcal{E}_U}$ satisfying $\delta(p,q)=0$. So on a fixed simple branching, the  null orbits of $K$ corresponding to the branch points are of the same nature (complete or not),  by Remark \ref{Remarque sur la nature des zéros}.
\end{remark}
\begin{remark}
	In \cite{MR0089412} p. 115, the authors define a differentiable structure on the simple branching so that any $ C^{\infty} $ function  defined on it is of rank $ 0 $ at the origin. They also give examples of manifolds of dimension $ 1 $ admitting a differentiable structure of class $ C^{\infty} $ such that all the differentiable functions on these manifolds are reduced to constants. Since  there is a function $ \textbf{x}: \mathcal{E}_X \to \R $ which is a local diffeomorphism, thus of rank $ 1 $ everywhere, then  these pathological properties are discarded in the case of $ \mathcal{E}_X $ manifolds; moreover, if $ H \in C^{\infty}(\mathcal{E}_X, \R) $ then $H$ satisfies the property given  in Remark \ref{Remarque sur les zéros de f _structure différentiable}.
\end{remark}
\subsection{Distinguished geodesics and linking structures}

In the sequel,  we say that  $\phi: (X_1,K_1) \rightarrow (X_2,K_2)$ is an isometry between two Lorentzian surfaces $(X_1,K_1)$ and $(X_2,K_2)$ admitting a Killing field,  if $\phi$ is an isometry sending $K_1$ to $K_2$.

We know that in any orientable Lorentzian surface, there exists two null-foliations. Let $(X,K)$ be a simply connected Lorentzian surface with a Killing field $K$; the two null-foliations define two disjoint families of ribbons covering $X$, such that two any ribbons in the same family are disjoint, and two ribbons belonging to different families are either disjoint or intersect in an open band. 
 
Recall that $ \mathcal{E}_X $ is a smooth manifold étalé in $ \R $ by a local diffeomorphism $ \textbf{x} \in C^{\infty} (\mathcal{E}_X, \R) $; in particular, $ \mathcal{E}_X $ is orientable, oriented by $ d \textbf{x} $. 
\begin{definition}\label{Définition géodésique admissible de X}
	A geodesic of $ \mathcal{E}_X $ is called "\textbf{distinguished}" if it is an oriented maximal geodesic which is the space of leaves of a ribbon in $X$. 
\end{definition}
This gives two disjoint families of distinguished geodesics of $\mathcal{E}_X$, such that the intersection of two of them is either empty or a connected component of the interior of $\mathcal{E}_X \smallsetminus B$.\\% whose closure in $ \mathcal{E}_X $ is a closed segment (resp. a semi-closed interval), containing two pairs of points (resp. one pair of points) in the boundary.

%Recall that $ \mathcal{E}_X $ is a smooth manifold étalé in $ \R $ by a local diffeomorphism $ \textbf{x} \in C^{\infty} (\mathcal{E}_X, \R) $; in particular, $ \mathcal{E}_X $ is orientable, oriented by $ d \textbf{x} $. 
The orientation of $ K $ induces an order on the pairs of points $ \{p_1, p_2 \} $, where $ p_1 $ and $ p_2 $ are two non-separated points of $ \mathcal{E}_X $. We will denote by $ (p_1, p_2) $, $ p_1 <p_2 $, the ordered pair with origin $ p_1 $ and end point $ p_2 $. Let $ \gamma $ be a distinguished geodesic, and let $ \sigma \in \Sigma $ contained in $ \gamma $, whose closure in $ \mathcal{E}_X $ is a non-Hausdorff segment whose boundary is given by two ordered pairs $ (p_1, p_2) $ and $ (q_1, q_2) $. 
The geodesic is characterized by the fact that it passes through $ p_1 $ and $ q_2 $ (or $ p_2 $ and $ q_1 $), for any $\sigma \in \Sigma $ contained in $ \gamma $, whose closure in $ \mathcal{E}_X $ is as before.
\paragraph{Distinguished geodesics of an étalé manifold} 
\begin{definition}\label{Définition topologie T}
	Let $ \mathcal{E} $ be a manifold of dimension  $ 1 $ with a countable base, étalé in $ \R $. We say that $ \mathcal{E} $ has topology $ \mathfrak{T} $ if it satisfies in addition the following properties:
	
	(1) Non-separated points are simple branch points.
	
	(2) Finite order branchings are of order $ 4 $.
	
	(3) $\mathcal{E}$ satisfies the property ($ \mathcal{P} $).
	
	(4) $ \mathcal{G}(\mathcal{E}) $ is a real tree.
\end{definition}
\textbf{Notation:} We define $\Sigma_2$ as the subset of $\Sigma_1$, such that if $\sigma \in \Sigma_2$, then $\bar{\sigma}$ is a non-Hausdorff segment with two pairs of non-separated points in the boundary.

Let $ \mathcal{E} $ be a smooth manifold with a topology $ \mathfrak{T} $. For any element $ \sigma \in \Sigma_2 $, giving only the topological manifold $ \mathcal{E} $ does not allow to distinguish the non-separated points in the boundary of $\sigma$, and therefore to characterize distinguished geodesics  by the way in which they cross these points. We then need an additional data on $ \mathcal{E} $:
\begin{definition}\label{Définition linking structure}
	Let $\mathcal{E}$ be a smooth manifold with a topology $\mathfrak{T}$. A \textbf{linking structure} $\mathcal{A}$ on $\mathcal{E}$ is the giving of two families $\mathcal{F}_1$ and $\mathcal{F}_2$ of pairwise disjoint maximal geodesics satisfying the following conditions:
	\begin{enumerate}[itemsep=0pt, topsep=0pt, parsep=0pt]
		\item $\mathcal{F}_1 \cup \mathcal{F}_2 = \mathcal{E}$ and $\mathcal{F}_1 \cap \mathcal{F}_2 = \emptyset$,
		%	\item $\mathcal{F}_1 \cap \mathcal{F}_2 = \emptyset$,
		\item $\forall \gamma_1 \in \mathcal{F}_1, \forall \gamma_2 \in \mathcal{F}_2, \gamma_1 \cap \gamma_2 = \emptyset \textrm{\;\;or\;\;} \gamma_1 \cap \gamma_2 \in \Sigma_1$.
	\end{enumerate}
	The second condition will be referred to as the \textit{transversality condition}.
	
	A maximal geodesic of $ \mathcal{E} $ is said to be \textbf{distinguished} \textbf{relative to $\mathcal{A}$} if it belongs to one of the two families. 
\end{definition}

\textbf{Notation:} We denote by $G_{\mathcal{A}}$ the set of all distinguished geodesics of $\mathcal{E}$ relative to $\mathcal{A}$.

Note that since  $ \mathcal{E}$ is étalé in $\R$, a distinguished geodesic  is diffeomorphic to $\R$.\\

In what follows, we give a combinatorial way to defining a linking structure on $\mathcal{E}$. Take $ \sigma \in \Sigma_2 $, and denote by $ \partial_1 \sigma $ (resp. $ \partial_2 \sigma $) the left boundary (resp. right boundary) of $ \sigma $ composed of two non-separated points. If $\mathcal{E} \simeq \mathcal{E}_X$, we know that there are two distinguished geodesics of $ \mathcal{E}_X $ containing $ \sigma $, and they define a bijection $ \mu_{\sigma}: \partial_1 \sigma \rightarrow \partial_2 \sigma $. For $ \sigma \in \Sigma_2 $, there are two possible bijections from $ \partial_1 \sigma = \{a, a '\} $ to $ \partial_2 \sigma = \{b, b' \} $. Fixing a bijection $ \phi_{\sigma} $ determines two distinct paths on the closure of $\sigma $ whose intersection is $ \sigma $; they are defined by $ [a \rightarrow \phi_{\sigma}(a)] $ and $ [a'\rightarrow \phi_{\sigma}(a')] $.
\begin{remark}\label{Définition combinatoire des géo distinguées}
	Let $ \mathcal{E} $ be a smooth manifold with a topology $ \mathfrak{T} $. A  linking structure   on $\mathcal{E}$  induces a bijection $\phi_{\sigma}: \partial_1 \sigma \rightarrow \partial_2 \sigma $ for every element $ \sigma \in \Sigma_2 $. And a  \textbf{distinguished} geodesic of $\mathcal{E}$ is a maximal geodesic such that for all $ \sigma \in \Sigma_2$, the geodesic follows a path defined by $ \phi_{\sigma} $.
\end{remark}
\begin{lemma}
	Let $\mathcal{E}$ be a smooth manifold with topology $\mathfrak{T}$. Giving a linking structure on $\mathcal{E}$ is equivalent to giving a bijection $\phi_{\sigma}: \partial_1 \sigma \to \partial_2 \sigma$, for all $\sigma \in \Sigma_2$, such that the set $G:=\{ \gamma$, where $\gamma$ is a maximal geodesic such that $ \forall \sigma \in \Sigma_2, \gamma$ follows a path defined by $\phi_{\sigma}\}$ covers $\mathcal{E}$. 
\end{lemma}
\begin{proof}
What we have to show is how one can define a linking structure on $\mathcal{E}$ when given these bijections on every $\sigma \in \Sigma_2$, in such a way that the set of distinguished geodesics coincides with $G$. We define an equivalence relation $\sim$ on $G$ in the following way: let $\alpha,$ $\beta \in G$, and choose two points $p$ and $p'$ on $ \alpha$ and $\beta$ respectively. There exists a piecewise smooth path in $\mathcal{E}$ with no cycle, joigning $p$ and $p'$; denote it by $r$. Since $G$ covers $\mathcal{E}$ and $r$ is compact, $r$ can be covered by a finite number of geodesics of $G$; denote by $\{\gamma_0, ..., \gamma_n \}$ the sub-family of $G$ covering $r$ such that: $\gamma_0 = \alpha$, $\gamma_n = \beta$, and $\gamma_i \cap \gamma_j \neq \emptyset$ if and only if $ |i-j|=1$ (when $r$ is fixed, this family is well defined). Set $\alpha \sim \beta$ if $n$ is an even number. The equivalence relation provides a partition of $G$ into two disjoint equivalence classes $\mathcal{F}_1$ and $\mathcal{F}_2$ satisfying $\mathcal{F}_1 \cup \mathcal{F}_2 = \mathcal{E}$. We claim that the parity of $n$ does not depend on the choice of the path $r$, $p$ and $p'$. To see this, suppose there is another path $r'$, with no cycle, joining $p$ and $p'$, and let $\{\gamma'_0, ..., \gamma'_m \}$  be the sub-family of $G$ covering $r'$, such that $\gamma'_0 = \alpha$, $\gamma'_m = \beta$, and $\gamma'_i \cap \gamma'_j \neq \emptyset$ if and only if $ |i-j|=1$. It is clear that the family $\{\gamma_0,...,\gamma_n, \gamma'_0,..., \gamma'_m \}$ defines a closed path in $\mathcal{E}$; let us first assume that each geodesic in  $\{\gamma_0,...,\gamma_n, \gamma'_1,..., \gamma'_{m-1} \}$ appears only once. One can define by induction a decomposition of this family into sub-families $f_j:=\{\gamma_{i_j}, \gamma_{i_j+1},..., \gamma_{i_{j+1}}; \gamma'_{i'_j}, \gamma'_{i'_j+1}, ..., \gamma'_{i'_{j+1}} \}, 0 \leq j \leq k-1$, with $k \in \N$, $i_0=0, i'_0=1, i_k=n, i'_k=m-1$,  such that each of them is a simple cycle of geodesics of $G$. This can be done in the following way: at each step, $f_j$ is the subset containing $\{\gamma_{i_j}, \gamma'_{i'_j} \}$ and satisfying: $\gamma'_{i'_j} \cap (\gamma_{i_j +1} \cup ... \cup \gamma_{n}) = \emptyset$, $\gamma'_l \cap (\gamma_{i_j} \cup ... \cup \gamma_{n}) = \emptyset$, with $i'_j+1 \leq l \leq i'_{j+1}-1$, $\gamma'_{i'_{j+1}} \cap (\gamma_{i_j} \cup ... \cup \gamma_{i_{j+1}-1}) = \emptyset$, and $\gamma_{i_{j+1}} \cap \gamma'_{i'_{j+1}} \neq \emptyset$. 

%$f_0$ is the family containing $\gamma_0=\gamma'_0=\alpha$ and satisfying: $\gamma'_1 \cap (\gamma_1 \cup ... \cup \gamma_n) = \emptyset$, $\gamma'_l \cap (\gamma_0 \cup ... \cup \gamma_n) = \emptyset$, with $i'_2 \leq l \leq i'_{i'_1 -1}$, $\gamma'_{i'_1} \cap (\gamma_0 \cup ... \cup \gamma_{i_1-1}) = \emptyset$, and $\gamma_{i_1} \cap \gamma'_{i'_1} \neq \emptyset$. 

Since $\mathcal{G}(\mathcal{E})$ is a tree, each cycle $f_j$ is necessarily a finite order branching (Remark \ref{Remarque: condition to be a real tree}), hence one of order $4$. So if we count the number of geodesics in the total family, we have $4k-2(k-1)+2=2k+4$ geodesics. Moreover, we know by hypothesis that there are $m+n+2$ of them, so we get $m+n=2k+2$, hence $m+n$ is an even number. It follows that $m$ and $n$ have the same parity. It appears from this proof that the conclusion does not depend  on the choice of the points $p$ and $p'$ either. 

Now suppose that $\{\gamma_0,...,\gamma_n, \gamma'_1,..., \gamma'_{m-1} \}$ is no longer injective. Let $(s_j)_{1\leq j \leq l}$ and $(s'_j)_{1\leq j \leq l}$ be two maximal subsets of $\{1,..,n \}$ and $\{1,..,m \}$ respectively, such that $\gamma_{s_j} = \gamma'_{s'_j}$, for all $1 \leq j \leq l$. Consider the  family
%$ \begin{Bmatrix} \gamma_0 & ... & \gamma_{s_1} \\ \gamma'_0 &...& \gamma'_{s'_1 -1} \end{Bmatrix} $, $1 \leq j \leq l-1$,
$ \begin{Bmatrix} \gamma_{s_j} & ... & \gamma_{s_{j+1}} \\ \gamma'_{s'_j} &...& \gamma'_{s'_{j+1}} \end{Bmatrix} $, for $1 \leq j \leq l-1$.
%and$ \begin{Bmatrix} \gamma_{s_l +1} & ... & \gamma_n \\ \gamma'_{s'_l} &...& \gamma'_m \end{Bmatrix} $. 
If such a family contains only elements of the sequences $(s_j)$  and $(s'_j)$, then it has an even cardinal number; otherwise, it is as above and then has also an even number of elements. Finally, since two consecutive families have an even number of elements in common, we conclude that the global family has an even cardinal number, which implies that $m+n$ is even and proves our claim.

%It is not hard to see that each of them is injective, satisfying in addition $\gamma_{s_j} \cap \gamma'_{s'_j -1}  \neq \emptyset, \gamma_{s_j+1} \cap \gamma'_{s'_j} \neq \emptyset, \forall 1 \leq j \leq l$. Either a family is of cardinality $2$, or it is a cycle in $G_{\mathcal{A}}$ of more than $2$ elements, in which case one can prove that it has an even cardinal number, like done before. This again gives the same conclusion that $m + n$ is even, which proves our claim. \\

Now we have to see that if two distinct geodesics belong to the same family $\mathcal{F}_i$, $i=1,2$, then they are necessarily disjoint.  Assume this is not the case; this gives, by definition of $\mathcal{F}_i$, a cycle in $G$ with an odd cardinal number, which is impossible. Finally, the transversality condition (see Definition \ref{Définition linking structure}) is a consequence of the fact that there is no cycle in $\mathcal{E}$ other then $4$-order branchings; this finishes the proof. 

%The point 2) of the lemma follows from the fact that a distinguished geodesic does not contain two non-separated points

\end{proof}
Now let $ h: \mathcal{E} \rightarrow \mathcal{E}^{'} $ be a homeomorphism between two manifolds with a topology $ \mathfrak{T} $. Let $ \mathcal{A} $ (resp. $ \mathcal{A}'$) be a linking structure on $ \mathcal{E} $ (resp. $ \mathcal{E}^{'} $);
this gives on each element $ \sigma \in \Sigma_2 $ (resp. $ \sigma'\in \Sigma_2^{'} $) a bijection $ \phi_{\sigma} $ (resp. $ \phi'_{\sigma'} $). The homeomorphism $ h $ sends the distinguished geodesics of $ \mathcal{E} $ into those of $ \mathcal{E}^{'} $ if and only if for all $ \sigma \in \Sigma_2 $ and $ \sigma'\in \Sigma_2^{'} $ such that $ h(\sigma) = \sigma'$, we have
\begin{align}\label{bijections_conjugaison}
\phi'_{\sigma'}=h \textrm{\,o\,} \phi_{\sigma} \textrm{\,o\,} h^{-1}.
\end{align}
%\begin{remark}\label{Remarque: intersection de deux géodésiques admissibles}
%	It follows from the proof of the previous lemma that if $ \mathcal{E} $ is a smooth manifold with a topology $ \mathfrak{T} $ and linking structure $\mathcal{A}$, then the intersection of two $\mathcal{A}$-distinguished geodesics of $ \mathcal{E} $ is either empty or an element of $\Sigma_1$.
%\end{remark}
\begin{remark}
	The set $G$ in the previous lemma does not necessarily  cover $\mathcal{E}$, so this requirement on $G$ cannot be omitted  (see Example \ref{Exemple: G_A ne recouvre pas E} below). 
\end{remark}
\begin{example}\label{Exemple: G_A ne recouvre pas E}
	In the following example (see Figure 11 below), we have a sequence of branch points $(p_n)_n$ converging along a real line to a point $p$ such that $ [p]_l = \{p\} $.
We define on $\mathcal{E}$ a linking structure such that a distinguished geodesic  from $\mathcal{E}$ contains exactly one branch point on the horizontal line. We associate a color from the set $\{$ green, red $\}$ to every distinguished geodesic, in such a way that two geodesics with a common intersection in $\Sigma_1$ have two different colors. The branch point from the horizontal line then takes the color (green or red) of the geodesic to which it belongs. This  gives an alternating sequence of colors for the points $(p_n)$. We see that there is no distinguished geodesic that goes through $p$, for such a geodesic forces the branch points on a certain neighborhood of $p$ to be of the same color.
	It follows that if there exists a distinguished geodesic  through $p$, the linking structure is determined in a neighborhood of $p$.
	\begin{figure}[h!]\label{Géodésiques admissibles, sigma}
		\labellist 
		\small\hair 2pt 
		\pinlabel {$p$} at 1735 120
		%	\pinlabel {$f \text{\;\;ne change pas de signe}$} at 900 21 
		\endlabellist 
		\centering 
		\includegraphics[scale=0.2]{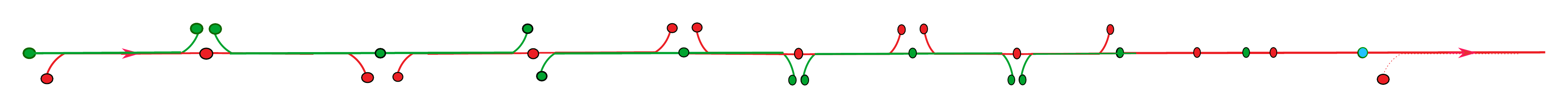} \caption{A non-covering set $G$}
	\end{figure} 
\end{example}

\begin{example}
	In the following example, the bands \textbf{A} and \textbf{B} are non isometric bands, as well as \textbf{A} and \textbf{C}. We have two surfaces $ (X_1, K_1) $ and $ (X_2, K_2) $ such that the manifolds $ \mathcal{E}_{X_1} $ and $ \mathcal{E}_{X_2} $ are homeomorphic, and the homeomorphism $ h: \mathcal{E}_{X_1} \rightarrow \mathcal{E}_{X_2} $ acts on the unique element $ \sigma \in \Sigma_2 $ by fixing the two non separated points on one side and permuting those on the other side. This homeomorphism preserves the translation structure and the two functions $ F_1 $ and $ F_2 $ induced by the norm of the Killing fields. Therefore, it appears that giving only the topological manifold $ \mathcal{E} $ and the pair $ (\textbf{x}, F) $ is not sufficient to determine the class of isometry of the surface.
	\begin{figure}[h!]\label{Géodésiques admissibles, sigma}
		\labellist 
		\small\hair 2pt 
		\pinlabel {\textbf{A}} at 502 845
		\pinlabel {\textbf{A}} at 766 525 
		\pinlabel {\textbf{B}} at 344 118
		\pinlabel {\textbf{A}} at 1500 834
		\pinlabel {\textbf{B}} at 1790 525 
		\pinlabel {\textbf{A}} at 1355 74
		\pinlabel {\textbf{C}} at 39 412 
		\pinlabel {\textbf{C}} at 960 393
		%	\pinlabel {$f \text{\;\;ne change pas de signe}$} at 900 21 
		\endlabellist 
		\centering 
		\includegraphics[scale=0.12]{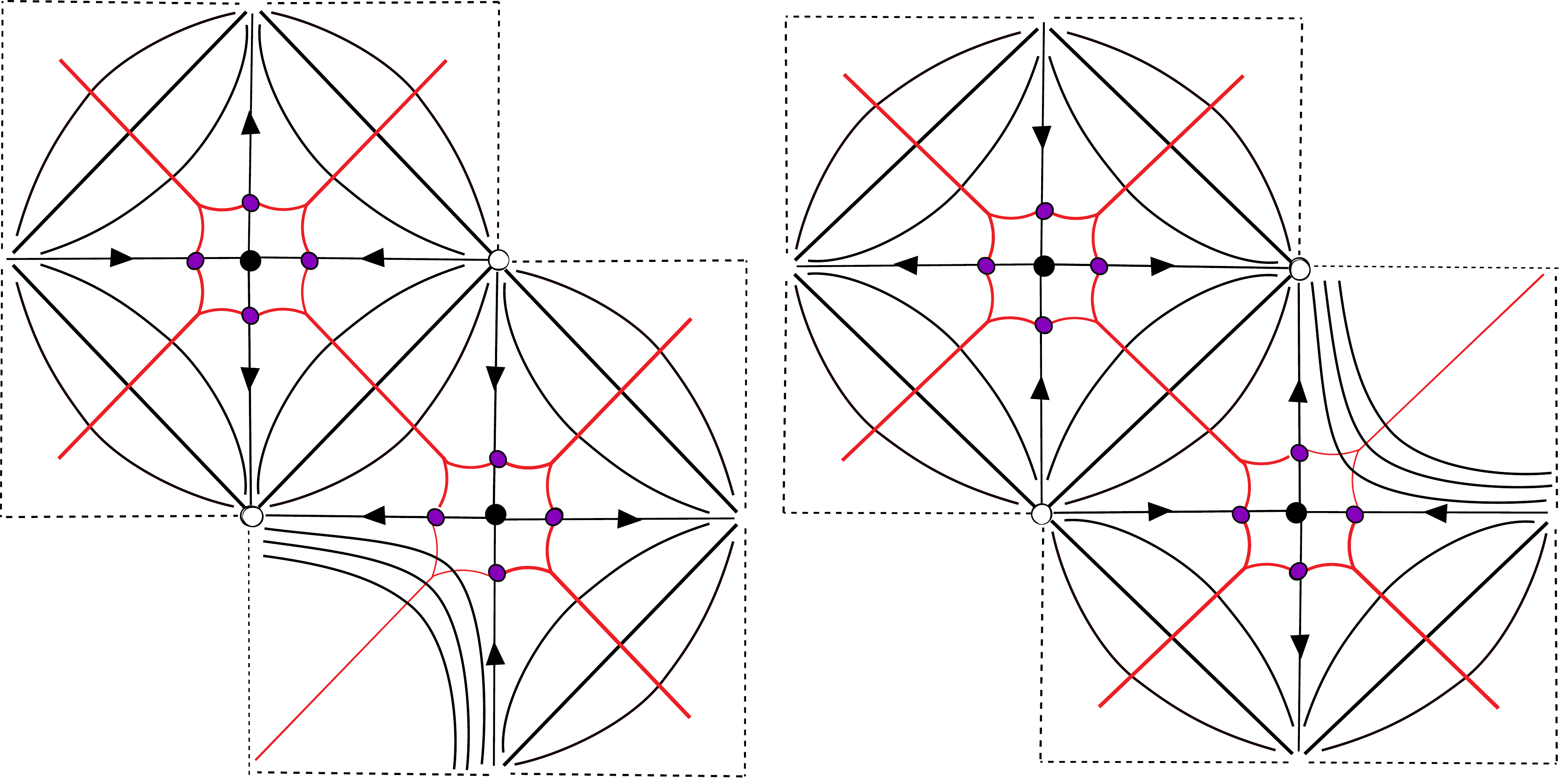} \caption{Two non-isometric Lorentzian surfaces with a Killing field, such that there exists a homeomorphism between the space of leaves which preserves the Riemannian structure on the space of leaves and the norm of the Killing field.}
	\end{figure} 	
\end{example}
\subsection{Correspondence between maximal surfaces and étalé manifolds}
In this section, we give a description of simply connected and maximal Lorentzian surfaces admitting a non-trivial Killing field. It is proved in \cite[Proposition 3.1]{BM} that a Lorentzian surface $(X,K)$ is maximal among surfaces admitting a Killing field if and only if $X$ is maximal in the usual sense. It appears in the proof of this proposition that such a surface is maximal if and only if all its maximal ribbons are inextendible. 

\begin{definition}
Let $\mathcal{E}$ be a $1$-dimensional smooth manifold with topology $\mathfrak{T}$, and let $\mathcal{A}$ be a linking structure on it.  A function $F \in C^{\infty}(\mathcal{E}, \R)$ is said to be $\mathcal{A}$-inextendible if $\displaystyle F_{|_{\gamma}}$, the restriction of $F$ to $\gamma$, is maximal for all $\gamma \in G_{\mathcal{A}}$.  
\end{definition}

If $ \mathcal{E} $ is a $1$-dimensional manifold with a countable base, recall that the set of branch points of $ \mathcal{E} $ is denoted by $ B $, and $ \Sigma$ is the set of connected components of the interior of $ \mathcal{E} \smallsetminus B $ (it is a countable set). The elements of $\Sigma$ are of two types: those with Hausdorff closure in $ \mathcal{E} $, denoted by $\Sigma_0$, and those with non-Hausdorff closure, denoted by $\Sigma_1$.

Giving a simply connected maximal Lorentzian surface $ (X, K) $ with a Killing field $ K $ defines a quadruplet ($ \mathcal{E},  \mathcal{A}, \textbf{x}, F $) , where $ \mathcal{E} = \mathcal{E}_X $, $\mathcal{A}$ is defined by distinguished geodesics of $\mathcal{E}_X $, $ \textbf{x} \in C^{\infty}(\mathcal{E}_X,\R) $ is a local diffeomorphism induced by the transverse coordinate of $ X $ up to translation and  change of sign, and $ F \in C^{\infty}(\mathcal{E}_X,\R) $ is the function induced by the norm of $K$. On every distinguished geodesic of $\mathcal{E}_X$, $\textbf{x}$ is a diffeomorphism into an interval $I$ of $\R$; this induces a function $f \in C^{\infty}(I,\R)$ such that $F = f \circ \textbf{x}$. The action of the $ D_2 $ dihedral group on $ \textbf{x} $ by translation and  change of sign induces a  right  action of $ D_2 $ which consists in replacing $ f(\textbf{x}) $ by $ f(\epsilon \textbf{x} + b) $, for $ (\epsilon, b) \in \{\pm 1 \} \times \R $, on every distinguished geodesic of $\mathcal{E}_X$.

An isometry $\phi: (X_1,K_1) \rightarrow (X_2,K_2)$  induces a diffeomorphism $h: \mathcal{E}_{X_1} \rightarrow \mathcal{E}_{X_2}$ such that
\begin{align}\label{quadruplets: conjugaison par un difféomorphisme}
\mathcal{A}_1=h \textrm{\,o\,} \mathcal{A}_2,\, \textbf{x}_1=\textbf{x}_2 \textrm{\,o\,} h + \alpha, \,\alpha \in \R, \textrm{\,and\,} F_1=F_2 \textrm{\,o\,} h.
\end{align} 
In the following, two quadruplets $(\mathcal{E}_1, \mathcal{A}_1, \textbf{x}_1, F_1)$ and $(\mathcal{E}_2, \mathcal{A}_2, \textbf{x}_2, F_2)$ are said to be equivalent if there exists a diffeomorphism $h: \mathcal{E}_1 \to \mathcal{E}_2$ satisfying (\ref{quadruplets: conjugaison par un difféomorphisme}).
\begin{theorem}\label{Proposition: correspondance entre surfaces maximales et variétés étalées}
	To give a simply connected and maximal Lorentzian surface $(X,K)$ (up to isometry), admitting a non-trivial Killing  field $K$, is equivalent to giving a quadruplet $(\mathcal{E}, \mathcal{A}, \textbf{x}, F)$ (up to equivalence), where
\begin{enumerate}[itemsep=0pt, topsep=0pt, parsep=0pt]
	\item $\mathcal{E}$ is a $1$-dimensional smooth manifold with topology $ \mathfrak{T} $ (see Definition \ref{Définition topologie T}),
	\item $\mathcal{A}$ is a linking structure on $\mathcal{E}$,
	\item $ \textbf{x}: \mathcal{E} \to \R $ is a smooth function defined up to translation and change of sign, which is a local diffeomorphism, defining on $ \mathcal{E} $ a translation structure,
	\item  $ F \in C^{\infty}(\mathcal{E}, \R) $ is an $\mathcal{A}$-inextendible function such that the closed set $\{F=0\}$ is composed of 
	
	(a) the closure of the set of branch points of $ \mathcal{E} $, with simple zeros on $4$-order branchings.
	
%	(b) points of $\mathcal{E} \smallsetminus B$ which are limit points of $B$.
	
	% (b) if $ f $ is of rank $ 1 $ (resp. of rank $ 0 $) in a point $ x_0 $ of $ \ mathcal {E} $, then it is rank $ 1 $ (resp. rank $ 0 $ ) at any point not separated from $ x_0 $.
	(b) elements of $ I \in \Sigma $ whose closure in $ \mathcal{E} $ is Hausdorff.

\end{enumerate}
%If $ X $ is such a surface, the action of the $ D_2 $ dihedral group on $ \textbf{x} $ by translation and  change of sign induces a  right  action of $ D_2 $ which consists in replacing $ f(\textbf{x}) $ by $ f(\epsilon \textbf{x} + b) $, for $ (\epsilon, b) \in \{\pm 1 \} \times \R $.\\
%Moreover, an isometry $\phi: (X_1,K_1) \rightarrow (X_2,K_2)$ between two such surfaces preserving the orientation of the spaces of Killing orbits  induces a diffeomorphism $h: \mathcal{E}_{X_1} \rightarrow \mathcal{E}_{X_2}$ such that $\mathcal{A}_1=h$ o $\mathcal{A}_2$ o $h^{-1}$, $x_1=x_2$ o $h + \alpha$, $\alpha \in \R$, and $f_1=f_2$ o $h$. 
\end{theorem}
Before we start the proof, we need to state the following lemma.
\begin{lemma}\label{Corollaire hors contexte}
	Let $ f \in C^{\infty}(\R, \R) $ be a function with a unique zero at $ 0 $. Let $ G $ be a primitive function of $ -1/f $ on $]-\infty,0[$. We have $ \displaystyle \lim_{x \to 0} G(x) = \infty $. 
\end{lemma}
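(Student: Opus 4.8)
The plan is to reduce the statement to the divergence of the improper integral $\int_{-\delta}^{0} \frac{dt}{|f(t)|}$ for a fixed small $\delta>0$, and to obtain that divergence from a \emph{linear} upper bound on $|f|$ near its zero.

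First I would note that $f$ keeps a constant sign on the connected interval $]-\infty,0[$, since it is continuous and non‑vanishing there; hence $G'=-1/f$ also has constant sign, so $G$ is monotone on $]-\infty,0[$ and the one‑sided limit $\lim_{x\to 0^-}G(x)$ automatically exists in $\R\cup\{\pm\infty\}$. It then only remains to rule out a finite value (and the sign of the limit can be read off at the end: it is $+\infty$ when $f<0$ on $]-\infty,0[$, i.e. when $G$ is increasing, and $-\infty$ otherwise).

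Next, since $f$ is $C^1$ with $f(0)=0$, the mean value theorem gives, for a fixed small $\delta>0$ and $C:=\sup_{[-\delta,0]}|f'|<\infty$, the estimate $|f(t)|=|f(t)-f(0)|\le C|t|$ for all $t\in[-\delta,0]$. Consequently, for $x\in\,]-\delta,0[\,$ (so that $-\delta<x$, and the integrand $-1/f$ has constant sign on $[-\delta,x]$),
\[
|G(x)-G(-\delta)| \;=\; \int_{-\delta}^{x}\frac{dt}{|f(t)|} \;\ge\; \frac{1}{C}\int_{-\delta}^{x}\frac{dt}{|t|} \;=\; \frac{1}{C}\,\ln\!\frac{\delta}{|x|},
\]
which tends to $+\infty$ as $x\to 0^-$. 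Hence $|G(x)|\to\infty$, and together with the monotonicity noted above this yields $\lim_{x\to 0}G(x)=\infty$ (with the sign as above).

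I do not expect a genuine obstacle. The only subtlety worth flagging is that the divergence really does require the zero of $f$ to be of finite order — equivalently, the linear bound $|f(t)|\le C|t|$ — which is exactly what differentiability at $0$ (already mere $C^1$ regularity) provides; had $f$ been only continuous and vanished like $\sqrt{|t|}$, the integral would converge. The sign of the limit is the single context‑dependent point, and I would simply record both cases rather than fix a convention.
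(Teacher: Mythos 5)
Your proof is correct and follows essentially the same route as the paper: the paper's one-line argument (that $f(x)/x$ extends smoothly, hence $|f(t)|\le C|t|$ near $0$) is exactly the linear bound you derive via the mean value theorem, and the logarithmic divergence of $\int dt/|t|$ does the rest in both cases. Your version merely makes explicit the monotonicity of $G$ and the comparison integral that the paper leaves implicit.
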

\begin{proof}
	We use the fact that if $ f $ is a $C^{\infty}$ function that vanishes at $ 0 $, then $ f(x)/x $ can be extended to a $C^{\infty}$ function.
\end{proof}
\begin{proof}[\textbf{Proof of Theorem \ref{Proposition: correspondance entre surfaces maximales et variétés étalées}}]
Ley $ (X, K) $ be a simply connected maximal Lorentzian surface with a Killing field $ K $; it is clear that the function induced by the norm of $K$ satisfies condition (a), and also (b) by maximality of $X$.\\  
%defines a quadruplet ($ \mathcal{E},  \mathcal{A}, \textbf{x}, f $) , where $ \mathcal{E} = \mathcal{E}_X $, $\mathcal{A}$ is defined by distinguished geodesics of $\mathcal{E}_X $ determined by the orientation of $K$, $ \textbf{x} \in C^{\infty}(\mathcal{E}_X,\R) $ is a local diffeomorphism induced by the transverse coordinate of $ X $ up to translation and  change of sign, and $ f \in C^{\infty}(\mathcal{E}_X,\R) $ is the function induced by the norm of $K$; 
Conversely, let $ \mathcal{E} $ be a $1$-dimensional manifold with topology $ \mathfrak{T} $, and $ \textbf{x} \in C^{\infty}(\mathcal{E},\R) $ a local diffeomorphism, and $ F \in  C^{\infty}(\mathcal{E},\R) $ a function that satisfies conditions (a) and (b). We will show that there exists a unique Lorentzian metric on the plane -up to isometry- admitting a Killing field such that: (i) the space of Killing orbits is homeomorphic to $ \mathcal{E} $ and the distinguished geodesics of the metric coincide with those defined by the linking structure $\mathcal{A}$ on $\mathcal{E}$, (ii) the norm of the field is given by $ F $, (iii) and the Riemannian structure induced on $ \mathcal{E} $ by the metric coincides with that given by $ d \textbf{x}^2 $. \\
Let $\mathcal{F}_1$ and $\mathcal{F}_2$ be the two sets of distinguished geodesics of $\mathcal{E}$ given by the linking structure. They define two disjoint families of charts covering $ \mathcal{E} $.
%	\begin{itemize}
%		\item We denote by $ G_{\mathcal{E}} $ the set of distinguished geodesics of $ \mathcal{E} $ (it is a countable set) determined by $\mathcal{A}$, and we define the correspondence that associates to a geodesic $ c \in G_{\mathcal{E}}$ the subset of $ G_{\mathcal{E}} $ containing distinguished geodesics whose intersection with $ c $ defines an element of $ \Sigma $. Denote this image by $ c^{\vee} $.
%		\item We define the property (P$_0$): "$c \in P \Longrightarrow c^{\vee} \subset  Q$ and $c \in Q \Longrightarrow c^{\vee} \subset P$".
%		\item Fix $ c_0 \in G_{\mathcal{E}} $, and set $ c_0 \in P $. We define using the Zorn lemma two sets $ P $ and $ Q $ maximal in $ G_{\mathcal{E}} $, satisfying the property (P$_0 $), and such that $ c_0 \in P $. The set of open subsets of $ \mathcal{E} $ belonging to $  P \cup Q $ is called "the minimal covering" associated to $ \mathcal{E}$.
%	\end{itemize}
%	The connected components (which forms a countable set) of the interior of $ \mathcal{E}-B $ are of two types: those with Hausdorff closure in $ \mathcal{E} $ and those of non-Hausdorff closure. Denote by $ \Sigma_1 $ the subset of $ \Sigma $ containing the segments with non Hausdorff closure.
 For all $ \mathfrak{a} \in \mathcal{F}_1 $ and $ \mathfrak{b} \in \mathcal{F}_2 $, $ \mathfrak{a} \cap \mathfrak{b} = \emptyset $ or $ \mathfrak{a} \cap \mathfrak{b} = \sigma \in \Sigma_1 $.
	
	\begin{itemize}
		\item Define a parametrization of $ \Sigma_1 $ by a subset $ A $ of $ \Z $ so that if $ (\alpha_1, \alpha_2, \alpha_3, \alpha_4) $ is an indexing of a cycle of $ 4 $ elements $ \sigma_i $ ($ i \in \Z/4\Z $) of $ \Sigma_1$ around a $ 4 $ order branching, such that $ \sigma_i $ and $ \sigma_{i + 1} $ belong to the same distinguished geodesic of $ \mathcal{E} $ for all $ i \in \Z/4\Z $, then we have the relation $ \alpha_1 - \alpha_2 = \alpha_4 - \alpha_3 $. Such a parametrization exists since, by hypothesis, there are no cycles of $4$-order branchings in $ \mathcal{E} $.

		\item Every element of $ \Sigma _1 $ is the intersection of a unique element of $ \mathcal{F}_1 $ and a unique element of $ \mathcal{F}_2 $. There is an indexing of the geodesics in $ \mathcal{F}_1 \cup \mathcal{F}_2 $ by a set $ I \amalg J $, where $ I $ and $ J $ are subsets of $ \Z $, such that for all $ i \in I, j \in J, \alpha \in A $, if $ \mathfrak{a}_i \in \mathcal{F}_1 $, $ \mathfrak{b}_j \in \mathcal{F}_2 $ and $ \mathfrak{a}_i \cap \mathfrak{b}_j = \sigma_{\alpha} \in \Sigma_1 $, then $ i + j = \alpha $. This defines a subset $ S $ of $ \Z^2 $ such that the correspondance that maps $ (i, j) \in S $ to $ \alpha \in A $ such that $ i + j = \alpha $ is bijective . We obtain it in the following way: we fix an element $ \sigma_{\alpha_0} \in \Sigma_1 $ and we write $ \alpha_0 = i_0 + j_0 $, $ i_0 \in \Z, j_0 \in \Z $; we denote by $ \mathfrak{a}_{i_0} $ the geodesic of $ \mathcal{F}_1 $ containing $ \sigma _{\alpha_0} $ and $ \mathfrak{b}_{j_0} $ the geodesic of $ \mathcal{F}_2 $ containing $ \sigma_{\alpha_0} $. This determines the indices of all distinguished geodesics; indeed, if $ c $ is a geodesic of $ \mathcal{F}_1 \cup \mathcal{F}_2 $, there is a finite path of distinguished geodesics $ c_0, ..., c_n $ such that $ c_0 = c $ and $ c_n = \mathfrak{a}_{i_0} $, satisfying $ c_k \cap c_{k + 1} \neq \emptyset $. This makes it possible to define an index for $ c $ using the relation $ i + j = \alpha $ if and only if $ \mathfrak{a}_i \cap \mathfrak{b}_j = \sigma_{\alpha} $. We show next that the indexing of $ c $ does not depend on the chosen path, using the relation $ \alpha_1 - \alpha_2 = \alpha_4 - \alpha_3 $ for a $ 4 $ order cycle of simple branch points.
	\end{itemize}
	Let $ (H, K) $ be a ribbon such that $ \mathcal{E}_H = \mathfrak{a} \in \mathcal{F}_1 $. Assuming that the Riemannian structure induced by the metric coincides with that given by $ d \textbf{x}^2 $, and that the norm of the Killing field is given by $ F $, then the metric on $ H $ in an adapted basis reads $ g = 2 \epsilon dx dy + f(x) dy^2 $, with $ \epsilon = \pm 1 $, $ K = \partial_y $ and $F=f \circ \textbf{x}$. On a ribbon $ (V, K) $ corresponding to an element $ \mathfrak{b} \in \mathcal{F}_2 $ such that $ \mathfrak{a} \cap \mathfrak{b} = \sigma_{\alpha} \in \Sigma_1 $, we have $ g = -2 \epsilon dx'dy' + f(x') dy'^2 $, with $ K = \partial_y '$. Let's call $ C_{\alpha}^H $ (resp. $ C_{\alpha}^V $) the open square of $ H $ (resp. $ V $) such that $ \mathcal{E}_{C_{\alpha}^H} = \mathcal{E}_{C_{\alpha}^V} = \sigma_{\alpha} $; the two squares are glued by means of the map $ x'= x, y' = y- \epsilon G_{\alpha}(x) $, where $ G_{\alpha} $ is a primitive function of $ -1/f_{\alpha} $ on the interval $ \textbf{x}(\sigma_{\alpha}) $. \\

	For all $ \alpha \in A $, $ \sigma_{\alpha} = \mathfrak{a}_i \cap \mathfrak{b}_j $ for a unique pair $ (i, j) \in S $. For every $ \alpha \in A $, we choose a primitive function $ G_{\alpha} $ of $ -1/f_{\alpha} $. Let $ (X_0, K) $ be the Lorentzian surface with a Killing field $ K $ defined as the quotient space of the surface $ (\amalg H_i) \displaystyle \amalg_{(i, j) \in S}(\amalg V_j) $, where $ (H_i, K) $ is the ribbon $ \textbf{x}(\mathfrak{a}_i) \times \R $ endowed with the metric $ 2 \epsilon dx dy + f_i(x) dy^2 $ with $ K = \partial_{y} $, and $ (V_j, K) $ the ribbon $ \textbf{x}(\mathfrak{b}_j) \times \R $ with metric $-2\epsilon dx dy + f_j (x) dy^2$ with $ K = \partial_{y} $, by the equivalence relation $ \sim $ defined by
	$$p \sim q \textrm{\;si\;} p=q \textrm{\;ou\;} (p,q) \in C_{\alpha}^{H_i} \times C_{\alpha}^{V_j}, (i,j) \in S, \alpha = i+j \in A \textrm{\;et\;} q=\phi_{\alpha}(p),$$
	where $\phi_{\alpha}$ is the reflection of $C_{\alpha}^{H_i}$ defined by $G_{\alpha}$.
	
	Since $G_{\mathcal{A}}$ covers $\mathcal{E}$, the surface we obtain is the total space of a line bundle over $ \mathcal{E} $. It is connected by connectedness of $ \mathcal{E} $, and Hausdorff by Lemma \ref{Corollaire hors contexte} and Proposition 5 \cite{MR0219084}.

If $ \mathcal{E} $ is simply connected, then $ X_0 $ is an acyclic surface with a countable base (\cite[Proposition 3]{MR0219084}). And it is known that the only simply connected surfaces with a countable base and without boundary are the sphere $ S^2 $ and the plane $ \R^2 $ (this is a classical theorem in surface theory, see for example \cite[Theorem 3.2.2]{MR1658468}); finally $ X_0 $ is homeomorphic to the $ \R^2 $ plane. In addition, $ X_0 $ does not contain a zero of the Killing field, and the choice of $ \epsilon $ in the definitioh of the metric determines a choice of $ K $ or $ -K $. If $ \mathcal{E} $ contains branchings of order $ 4 $, the surface above such a branching is a quasi-saddle. Let $ (G_{\alpha_1}, G_{\alpha_2}, G_{\alpha_3}, G_{\alpha_4}) $ be the primitive functions fixed around a branching of order $4$; we can always modify $ G_4 $ so that we can add a saddle point (see \cite[Proposition 2.32]{BM}); since there are no cycles of $4$-order branchings in $ \mathcal{E} $, we can do this for every branching of order $4$ in a coherent way. It is easy to see that the surface thus obtained is simply connected.

Finally, the fact that $ f $ is $\mathcal{A}$-inextensible implies that every maximal ribbon in the surface is inestensible, hence the surface is maximal; if $ f $ is not extensible, the topology $ \mathfrak{T} $ on the space of leaves implies that the obtained surface still satisfies a weak property of maximality: every half-band is contained in a type III band.

To finish the proof, we have to see that  given two equivalent quadruplets $(\mathcal{E}_1, \textbf{x}_1,\mathcal{A}_1,F_1)$ and 	$(\mathcal{E}_2, \textbf{x}_2,\mathcal{A}_2,F_2)$, the corresponding surfaces obtained above are isometric. Denote by $h$ the homeomorphism defining this equivalence, and by $(X_1,K_1)$ and $(X_2,K_2)$ the two Lorentzian surfaces.  On every ribbon of $X_1$ and $X_2$, define local coordinates as above, by choosing on each of them an origin for the $y$-coordinate. It is possible to do this globally on each surface in a coherent way  (\cite[Proposition 2.32]{BM}). Fix two ribbons $(U_1,K_1)$ and $(U_2,K_2)$ in $X_1$ and $X_2$ respectively, such that $h$ sends the distinguished geodesic above $U_1$ to the one above $U_2$. The metric on $U_1$ reads $2 \epsilon_1 dx_1 dy + f_1(x_1) dy^2$ in local coordinates, where $\epsilon_1 = \pm 1$ corresponds to a choice of an orientation on the Killing field. The metric on $U_2$ reads  $2 \epsilon_2 dx_2 dy + f_2(x_2) dy^2$, with $\epsilon_2 = \pm 1$. %We have $\epsilon_1 \epsilon_2 = 1$ for every two ribbons $U_1$ and $U_2$ as above, or $\epsilon_1 \epsilon_2 = -1$. 
The isometry then sends $(x_1,y)$ to $(x_2, \epsilon_1 \epsilon_2 y)$, such that $\textbf{x}_2^{-1}(x_2) = h \circ \textbf{x}_1^{-1}(x_1)$.  It is globally well defined, and this achieves the proof.  
\end{proof}
\begin{remark}
	Let $\mathcal{E}$ be a $1$-dimensional smooth manifold with topology $\mathfrak{T}$, and let $\textbf{x} \in C^{\infty}(\mathcal{E},\R)$ be a local diffeomorphism. To say that a function $F \in C^{\infty}(\mathcal{E}, \R)$ is $\mathcal{A}$-extendible, for some linking structure $\mathcal{A}$ on $\mathcal{E}$,  is in general not equivalent to saying that $(\mathcal{E},\textbf{x},F)$ is extendible to a bigger manifold $(\widetilde{\mathcal{E}}, \widetilde{\textbf{x}},\widetilde{F})$ with $\widetilde{\mathcal{E}}$ having a topology $\mathfrak{T}$. In Example \ref{Exemple: l'importance du choix de linking structure}, one may define a function $F$ and a linking structure for which $F$ is extendible, and another one for which $F$ is not extendible. 
\end{remark}
\begin{remark}
Let $ (E_f^u, K) $ be a universal surface. If $ \mathcal{A} $ and $  \mathcal{A}^{'} $ are two linking structures on the space of leaves $ \mathcal{E}_{E^u_f} $, then there is a homeomorphism $ h $ of $ \mathcal{E}_{E^u_f} $ such that $ \mathcal{A}^{'}~=~h $~o~$ \mathcal{A} $~o~$ h^{- 1} $, and which preserves the transverse coordinate as well as the norm of the Killing field. We then find the fact that the isometry class of $ E^u_f $ is determined by $ f $.
\end{remark}
\begin{remark}(\textbf{Lorentzian surfaces with a Killing field and principal bundles})\\
	We can see in the proof of Theorem \ref{Proposition: correspondance entre surfaces maximales et variétés étalées} that to any simply connected Lorentzian surface $ (X, K) $ admitting a non-trivial Killing field $ K $, one can associate a principal line bundle over the space of leaves $ \mathcal{E}_X $.
	Conversely, given a manifold $ \mathcal{E} $ of dimension $1$ with a countable base and topology $ \mathfrak{T} $, étalé in $ \R $, together with a principal line bundle $ \mathcal{L} = (L, p, \mathcal{E}) $ over $ \mathcal{E} $, one can define exactly two Lorentzian metrics $ \pm g $ on the plane admitting a Killing field, such that the principal bundle associated to these metrics coincide with $ \mathcal{L} $.
\end{remark}

\paragraph{Topological characterization of the space of leaves} 

\begin{corollary}\label{Corollaire 1 (cas complet): caractérisation top de l'espace des feuilles}
	Let $\mathcal{E}$ be a $1$-dimensional smooth manifold with topology $\mathfrak{T}$, and let $\textbf{x} \in C^{\infty}(\mathcal{E},\R) $ be a local diffeomorphism. Suppose there exists a linking structure $\mathcal{A}$ such that $\mathcal{A}$-distinguished geodesics are complete. Then there is a smooth, simply connected and complete Lorentzian surface $(X,K)$ with $\mathcal{E}_X \simeq \mathcal{E}$.
\end{corollary}
\begin{proof}
	By Theorem \ref{Proposition: correspondance entre surfaces maximales et variétés étalées} above and Theorem \ref{Complétude à courbure bornée} in the next paragraph, it suffices to prove the existence of a function $F \in C^{\infty}(\mathcal{E},\R)$, with bounded transverse second derivative, such that the set $\{F=0\}$ is as in Theorem \ref{Proposition: correspondance entre surfaces maximales et variétés étalées}. 
	Let $\gamma \in G_{\mathcal{A}}$. We know that $\textbf{x}_{|\gamma}: \gamma \to \R$ is a diffeomorphism. Let $S_{\gamma}$ be the closed subset of $\R$ composed of:\\
	(a) $\textbf{x}_{|\gamma}(B)$, where $B$ is the set of branch points of $\mathcal{E}$,\\
	(b) $\textbf{x}_{|\gamma}(\sigma)$,  $\forall \sigma \in \Sigma_0$,\\
	(c) $\textbf{x}_{|\gamma}$, $\forall p \in \mathcal{E} \smallsetminus B$  a limit point of $B$.\\
	Consider the functions $\phi_0$ and $\phi_1$ defined by:  	
	\[
	\phi_0(x)  =	\left\lbrace
		\begin{array}{ll}
		0 & \mbox{if\;}x \leq 0 \\
		e^{-1/x} & \mbox{if\;} x>0
	\end{array}
	\right.
	\qquad ; \qquad
	\phi_1(x)  =	\left\lbrace
	\begin{array}{ll}
	0 & \mbox{if\;\;}x \leq 0 \mbox{\;or\;} x \geq 1\\
	e^{1/x(x-1)} & \mbox{if\;\;} x \in ]0,1[
	\end{array}
	\right.
	\]
    These are $C^{\infty}$ functions with bounded derivatives. We write $\R \smallsetminus S_{\gamma} = \displaystyle \bigcup_{n \in \N} I_n$ as the union of  pairwise disjoint open intervals of $\R$, with possibly one or two unbounded intervals. If $I_n=]a_n,b_n[$ is bounded, set $f_n(x)= \epsilon(n) e^{-1/\lambda_n} \phi_1((x-a_n)/\lambda_n)$, with $\lambda_n = b_n-a_n$ and $\epsilon(n)=\pm 1$. This function vanishes exactly on $\R \smallsetminus I_n$. If $I_n=]a_n, + \infty[$ (resp. $I_n=]-\infty, a_n[$) is unbounded, set $f_n(x) = \epsilon(n) \phi_0(x-a_n)$ (resp. $\phi_0(a_n-x)$). On a fixed distinguished geodesic, the branch points belonging to a branching of order $4$ are isolated. Denote by $B_1=(b_j)_{j \in \Z}$ the (countable) subset of $B$ corresponding to these branch points. Let $p_j=\textbf{x}(b_j) \in \textbf{x}(B_1 \cap \gamma)$: $p_j$ delimits two intervals $I_{n_j}$ and $I_{n_{j+1}}$. For all $j \in \Z$, we assume that the functions $(f_n)_n$ satisfy the following property: if $f_{n_j} \leq 0$ then $f_{n_{j+1}} \geq 0$. It's easy to check that the function defined by $\hat{f}:=\sum_{n \in \N} f_n$ is a $C^{\infty}$ function with bounded derivatives, and that it vanishes exactly on $S_{\gamma}$. \\
	Now, fix  any $C^\infty$ function $\phi_2$ with bounded derivatives, such that $\phi_2(x)=0$ if $x \in ]-\infty,-1[ \cup \{0\} \cup$\\$ ]1,+\infty[$, and $\phi'(0)=1$.  For any $j \in \Z$, let $U_j=B(p_j,r_j)$, $r_j >0$, the open ball of center $p_j$ and radius $r_j$, such that $U_j \cap \textbf{x}(B) = \{p_j\}$. Define a $C^{\infty}$ function $h_j$ as follows: $h_j(x)=\epsilon(j) e^{-1/r_j} \phi_2((x-p_j)/r_j)$, with $\epsilon(j) =1$ if $f_{n_j} \leq 0$, and $\epsilon(j) = -1$ otherwise. The function $h:= \sum_{j \in \Z} h_j$ is a $C^{\infty}$ function with bounded derivatives, and with a simple zero at $p_j, \forall j \in \Z$. Set $f_{\gamma}:= \hat{f}+ h$. This is a $C^{\infty}$ function with bounded derivatives, that vanishes exactly on $S_{\gamma}$, and with simple zeros on $\textbf{x}(B_1)$. Let $\mathcal{F}_1$ be one of the two families of $G_{\mathcal{A}}$ containing $\gamma$, we define $f$ by induction on each  geodesic of $\mathcal{F}_1$ in such a way that on two  geodesics $\gamma_1$ and $\gamma_2$ that share a  $4$-order branching of $\mathcal{E}$, the functions $ f_{\gamma_1} \circ \textbf{x} $ and $f_{\gamma_2} \circ \textbf{x}$ can be glued into a $C^\infty$ function on $\mathcal{E}$. We obtain this way a function $F$ defined on $\mathcal{E}$, which is $C^{\infty}$  on the open sets belonging to $\mathcal{F}_1$. It is easy to check that  $F_{|\gamma} \circ \textbf{x}^{-1}$  is also a $C^{\infty}$ function on $\R$, $\forall$  $\gamma \in \mathcal{F}_2$, with bounded derivatives. So $F$ is a $C^{\infty}$ function on $\mathcal{E}$, with bounded derivatives such that the set $\{F=0\}$ is as in Theorem \ref{Proposition: correspondance entre surfaces maximales et variétés étalées}, which ends the proof.
	
\end{proof}

\begin{corollary}\label{Corollaire 3 (cas maximal): caractérisation top de l'espace des feuilles}
	Let $\mathcal{E}$ be a $1$-dimensional smooth manifold with topology $\mathfrak{T}$, and let $\textbf{x} \in C^{\infty}(\mathcal{E},\R) $ be a local diffeomorphism. Suppose that the set of branch points is locally finite, and that $\textbf{x}$ is unbounded on elements of $\Sigma_0$ near infinity. Then there is a smooth, simply connected and maximal Lorentzian surface $(X,K)$ with $\mathcal{E}_X \simeq \mathcal{E}$.
\end{corollary}
\begin{proof}
	Let $\mathcal{A}$ be any linking structure on $\mathcal{E}$.
	%By definition of the $\mathfrak{T}$ topology, there exists a smooth function $\textbf{x}: \mathcal{E} \to \R$ which is a local diffeomorphism. Since the set of branch points is locally finite, we can modify $\textbf{x}$ on each $\mathcal{A}$-distinguished geodesic so that this function is unbounded on elements of $\Sigma_0$ near infinity. So now we assume that $\textbf{x}$ satisfies this condition.
	Let $\gamma \in \mathcal{F}_1$, and let $S_{\gamma}$ be as in the proof of Corollary \ref{Corollaire 1 (cas complet): caractérisation top de l'espace des feuilles} above. Here, $\textbf{x}$ is a diffeomorphism between $\gamma$ and an interval $I$ of $\R$. If $I = \R$, we construct $f$ on $I$ as in the proof of Corollary \ref{Corollaire 1 (cas complet): caractérisation top de l'espace des feuilles}. If $I=].,a[$ is bounded from one side,  the hypothesis that $B$ is locally finite implies that $S_{\gamma} \cap ]a-1,a[$ is finite. 	Furthermore, we assumed that $\textbf{x}$ is unbounded on elements of $\Sigma_0$ near infinity, so we can construct $f$ on $I$ in such a way that it is unbounded on $]a-1,a[$, hence maximal on $I$. We  do this on every distinguished geodesic  of $\mathcal{F}_1$, so that the determined function $F$ defined on $\mathcal{E}$ is smooth. The obtained function is maximal on every element of $\mathcal{F}_1$. Now, if $\gamma \in \mathcal{F}_2$ with bounded interval $\textbf{x}(\gamma)$, since $B$ is locally finite, the geodesic $\gamma$ contains finitely many components of $\Sigma$; it follows that $f$ is unbouded on $\textbf{x}(\gamma)$, hence maximal. The function $F$ we obtain is $C^{\infty}$ and maximal on every distinguished geodesic of $\mathcal{E}$, and it defines by Theorem \ref{Proposition: correspondance entre surfaces maximales et variétés étalées} a smooth, simply connected and maximal Lorentzian surface.   
	% $\bigcap_{\gamma \in \mathcal{F}_1} \gamma$, 
	
\end{proof}

\begin{corollary}\label{Corollaire 2 (cas complet): caractérisation top de l'espace des feuilles}
	Let $\mathcal{E}$ be a $1$-dimensional smooth manifold with topology $\mathfrak{T}$. Suppose that the set of branch points is locally finite. Then there is a smooth, simply connected and complete Lorentzian surface $(X,K)$ with $\mathcal{E}_X \simeq \mathcal{E}$.
\end{corollary}
\begin{proof}
Let $\mathcal{A}$ be any linking structure on $\mathcal{E}$. By definition of the $\mathfrak{T}$ topology, there exists a smooth function $\textbf{x}: \mathcal{E} \to \R$ which is a local diffeomorphism. Define $\lambda := \min_{\sigma \in \Sigma} |\sigma|$, where $|\sigma|$ is the length of $\sigma$.  Since the set $B$ is locally finite, $\lambda >0$. We consider the new function $\textbf{x}_1:= 1/\lambda\; \textbf{x}$, which is a local diffeomorphism from $\mathcal{E}$ to $\R$ such that the length of $\sigma$ for this new function is bigger than $1$, for all $\sigma \in \Sigma$. Now, again using the fact that $B$ is locally finite, we can modify $\textbf{x}_1$ on each $\mathcal{A}$-distinguished geodesic so that the elements of $\Sigma$ near infinity will be of infinite length.  We get  a local diffeomorphism $\textbf{x}_1^{\infty} \in C^{\infty}(\mathcal{E},\R)$ such that all $ \mathcal{A}$-distinguished geodesics are complete. Corollary  \ref{Corollaire 1 (cas complet): caractérisation top de l'espace des feuilles} ensures the existence of a smooth, simply connected and complete Lorentzian surface $(X,K)$ such that $\mathcal{E}_X \simeq \mathcal{E}$.  
\end{proof}

\section{Completeness of Lorentzian surfaces with a Killing field}\label{Section: complétude}
%Le théorème de Hopf-Rinow n'étant plus vrai, ses conséquences sur la complétude des géodésiques sont toutes mises en défaut. De façon générale, la complétude géodésique est un enjeu délicat en géométrie lorentzienne. 
%We examine completeness of simply connected Lorentzian surfaces admitting a Killing field. We prove that under the hypothesis of bounded curvature, null completeness implies completeness of all geodesics. Then we examine other classes of such surfaces in which one can find a simple criterion to geodesic completeness. We  finish by giving examples of maximal incomplete surfaces with a Killing field.
\subsection{Critical Clairaut constant associated to a semi-ribbon}
In what follows, we denote by $R_f=(R,K)$ the surface $(R=I \times \R, 2dxdy+ f(x)dy^2), (x,y) \in I \times \R$, with a Killing field $K = \partial_y$. It is called the "ribbon associated to $f$".\\

Let $(X,K)$ be a Lorentzian surface with a non-trivial Killing field $K$. We assume that it is null complete. Recall that the set of connected components of the interior of $\mathcal{E}_X \smallsetminus B$, where $B$ is the set of branch points of $\mathcal{E}_X$, is denoted by $\Sigma$. 

A maximal geodesic $\gamma$ may have two different behaviors: either it leaves any maximal ribbon contained in $X$, or it remains in a maximal ribbon provided $t$ goes close enough to the limit. We denote by $t_{\infty}^{+} \in \R \cup \{\infty\}$ the upper bound of the domain of $\gamma$. In the second case, we shall consider two behaviors again; one may assume that $\gamma$ is defined and contained in the ribbon for $t \in [0,t_{\infty}^{+}[ $, by translating the geodesic parameter if necessary. Set $ I^{+} := \{x (\gamma (t)), t \in [0,t_{\infty}^{+}[ \} $; either $I^{+}$ is bounded, in which case the geodesic remains in a band for $t \geq 0$, or $I^{+}$ is unbounded.\\

Denote by $T$ the unit vector field tangent to $\gamma$, and $N$ the vector field along $\gamma$ orthogonal to $T$, such that the basis $(T,N)$ is positively oriented. Set $$K=C T + \beta N.$$
Then $C=\epsilon \langle T,K \rangle$ is a constant called the Clairaut constant (see \cite{MR3039767}, p. 3), and $\beta=-\epsilon \langle K,N \rangle$ is a solution of the Jacobi equation.  Notice that if $K(p), p \in \gamma$, is not degenerate, $\beta(p)=0$ if and only if $\gamma$ is tangent to $K$ at $p$.

% Tout ce qu'on dira dans la suite est vrai aussi pour $t_{\infty}^{-}$. 	
\begin{lemma}(\cite[Lemma 2.12]{LM}).\label{asymptote à K}
	Let $ \gamma $ be a non-null semi-geodesic that remains in a band for $t$ close enough to $t^+_{\infty}$, with bounded interval $I^{+}$. Then:
	\begin{itemize}[topsep=0pt, itemsep=0pt, parsep=0pt]
		\item either $\gamma$ is invariant by some element of the flow; in particular, $\gamma$ is entirely contained in that band;
		\item or it asymptotically approaches a critical orbit of $K$. If $\gamma$ is not orthogonal to $K$, this orbit is either timelike or spacelike, depending on the type of $\gamma$; otherwise, it is a null orbit of $K$. 
	\end{itemize} 
\end{lemma}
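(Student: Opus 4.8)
The plan is to work in a single maximal ribbon $(U,K)$ containing (a final arc of) $\gamma$, use the explicit metric $g = 2\epsilon\,dx\,dy + f(x)\,dy^2$ with $K = \partial_y$, and analyze the geodesic equations together with the two conserved quantities: the Clairaut constant $C = \epsilon\langle T,K\rangle$ and the "energy" $\langle T,T\rangle = \pm 1$ (since $\gamma$ is non-null and unit). First I would write $\gamma(t) = (x(t),y(t))$; conservation of $\langle \gamma',K\rangle$ along the flow of the Killing field $K=\partial_y$ gives $\langle \gamma',\partial_y\rangle = \epsilon x' + f(x)\,y' = \text{const}$, and the unit condition gives $2\epsilon x'y' + f(x)(y')^2 = \pm 1$. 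These two first integrals reduce the geodesic flow to a first-order ODE for $x(t)$, of the shape $(x')^2 = P(x)$ where $P$ is a polynomial expression in the conserved constants and in $f(x)$; the sign of $\langle T,T\rangle$ records whether $\gamma$ is timelike or spacelike.

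The hypothesis is that $\gamma$ stays in the band as $t \to t_\infty^+$, meaning $x(t)$ stays in a bounded interval $I^+$ whose closure meets $\{f = 0\}$ (the band's boundary is a null orbit of $K$). So the key step is the dichotomy on the behavior of $x(t)$: either $x(t)$ is constant (equivalently $x' \equiv 0$, i.e. $P$ vanishes identically along the orbit), or $x(t)$ is non-constant and, being monotone on a bounded interval by the autonomous first-order equation $(x')^2=P(x)$ with $P \geq 0$ there, converges to some $x_\infty \in \partial I^+$. In the first case $x \equiv x_0$ forces $\gamma$ to be the orbit $t \mapsto (x_0, y_0 + ct)$ of the Killing flow (after checking $f(x_0)$ is constant along — this is the "$\gamma$ invariant by a flow of $K$" case), and it is then automatically contained in the band for all time. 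In the second case $x_\infty$ must be a zero of the limiting relevant factor of $P$; I would identify, from the two conserved quantities, that $x_\infty$ is precisely a point where the leaf $\{x = x_\infty\}$ of $K$ is the asymptote — and that $\gamma$ approaches it without reaching it within the band, since reaching $\{f=0\}$ would exit the band. Finally, evaluating $\langle K,K\rangle = f(x_\infty)$ on that limit leaf: if $\gamma$ is not orthogonal to $K$ then $\beta$ does not vanish identically and the limiting leaf is non-degenerate, i.e. $f(x_\infty) \neq 0$ is impossible since the band boundary has $f = 0$ — so one must instead see that the asymptotic leaf is an \emph{interior} leaf of the band where $f(x_\infty)$ has a definite sign, and that sign (timelike vs spacelike leaf) is forced by the type of $\gamma$ via the unit relation $\langle T,T\rangle = \pm 1$ carried to the limit.

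The main obstacle I expect is this last step: separating cleanly the case where $x_\infty$ is a genuine interior turning point of the effective potential $P$ (giving an asymptotic interior leaf, timelike or spacelike) from the degenerate situation, and ruling out that $\gamma$ could "spiral" or oscillate — this is handled by the autonomy of $(x')^2 = P(x)$, which forbids oscillation and forces monotone convergence of $x(t)$, but one must be careful that $t_\infty^+$ may be finite (incompleteness) or infinite, and the argument for the asymptotic-leaf statement must not presuppose completeness. Since the precise statement and its proof are Lemma 2.12 of \cite{LM}, I would ultimately invoke that reference for the delicate estimates; the sketch above is the conceptual skeleton (first integrals $+$ monotonicity of $x(t)$ $+$ limiting the conserved relations) that the proof rests on.
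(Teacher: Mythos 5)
This lemma is quoted from \cite[Lemma 2.12]{LM} and the paper gives no proof of it, so your sketch can only be judged on its own merits. Your framework is the right one: the Clairaut constant and the unit-speed condition do reduce the transverse motion to the first integral $(x')^2=C^2-\epsilon f(x)$, and the constant solutions and the monotone-convergence case are analyzed correctly. But there is a genuine gap at the step you flag as the "main obstacle": it is \emph{false} that the autonomy of $(x')^2=P(x)$ forbids oscillation. An equation of this form is not a genuine first-order ODE $x'=F(x)$; at a \emph{simple} zero $x_1$ of $P$ the underlying second-order geodesic equation gives $x''=P'(x_1)/2\neq 0$, so the solution reaches $x_1$ in finite time and turns around (pendulum behaviour, e.g.\ $(x')^2=1-x^2$, $x=\sin t$). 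A geodesic can therefore oscillate forever between two simple zeros of $C^2-\epsilon f$ inside the band. This is not a pathology to be excluded: it is precisely (most of) the first alternative of the lemma. "Invariant by a flow of $K$" means invariant under some time-$s$ map $\phi_s^K$ of the Killing flow, not that $\gamma$ \emph{is} an orbit of $K$; after one full $x$-period the oscillating geodesic is translated by a fixed amount along $K$, hence $\gamma(t+T)=\phi_{\Delta y}^K(\gamma(t))$, and such a geodesic stays in the band for all time (these are the "invariant" or "periodic" geodesics mentioned in Section 4.2 of this paper). By reading the first bullet only as "$x\equiv x_0$" you lose this case, and your dichotomy then wrongly forces every non-constant solution into the asymptotic alternative.

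The correct trichotomy is: (a) $x\equiv x_0$ with $\epsilon f(x_0)=C^2$ and $f'(x_0)=0$ ($\gamma$ is a geodesic orbit of $K$); (b) $x(t)$ oscillates between two simple zeros of $C^2-\epsilon f$ ($\gamma$ invariant under some $\phi^K_s$, contained in the band for all time); (c) $x(t)$ converges monotonically, possibly after finitely many bounces, to a zero $x_\infty$ of $C^2-\epsilon f$ of order $\geq 2$, and $\gamma$ is then asymptotic to the leaf $\{x=x_\infty\}$, on which $\langle K,K\rangle=\epsilon C^2$; this leaf is timelike or spacelike according to the sign of $\epsilon$ when $C\neq 0$, and is a null orbit of $K$ (the band boundary) when $C=0$. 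A further small slip: "orthogonal to $K$" means $C=0$, not that $\beta$ vanishes ($\beta=0$ is tangency to $K$); your discussion of the last step conflates the two, though the conclusion you reach about the sign of $f(x_\infty)$ is the intended one.
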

A critical orbit of $K$ is an orbit corresponding to a critical point of the function $\langle K,K \rangle$ (these orbits are geodesics).
\begin{proposition}(\cite[Proposition 2.14]{LM}).\label{Complétude dans une bande}
	A non-null semi-geodesic $\gamma$ that lies in a band after a certain while with bounded $I^+$ is complete.
\end{proposition}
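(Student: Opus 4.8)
The plan is to couple the two first integrals of the geodesic flow with Lemma~\ref{asymptote à K}. Write $T=\gamma'$, with $\langle T,T\rangle\equiv\epsilon\in\{\pm1\}$, and work in a chart of a ribbon containing a final arc of $\gamma$, where the metric reads $2\,dx\,dy+f(x)\,dy^{2}$ with $K=\partial_{y}$ and $x$ the transverse coordinate. Conservation of energy $\langle\gamma',\gamma'\rangle\equiv\epsilon$ together with the conserved Clairaut quantity $c:=\langle\gamma',K\rangle$ (constant because $K$ is Killing) give, after eliminating $\dot y$, the separated equation $\dot x^{2}=c^{2}-\epsilon f(x)$, with $\dot y=(c-\dot x)/f(x)$ wherever $f\neq0$. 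Since $\gamma$ stays in a band, $x(\gamma(t))$ runs through a bounded interval as $t\to t_{\infty}^{+}$, and it suffices to prove $t_{\infty}^{+}=\infty$ (the lower endpoint is treated symmetrically).

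Apply Lemma~\ref{asymptote à K}: either $\gamma$ is invariant under the flow $\phi^{s}$ of $K$, or $x(\gamma(t))$ converges to some $x_{\infty}$ as $t\to t_{\infty}^{+}$. In the first case $\phi^{s}$ sends $\gamma$ to an affine reparametrization of itself, so $\phi^{s}(\gamma(t))=\gamma(t+\lambda s)$; the interior of a band contains no zero of $K$, so $\lambda\neq0$, and then the completeness of $K$ forces $\gamma$ to be defined on all of $\R$.

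In the convergent case, split according to whether the limit leaf $\{x=x_{\infty}\}$ is non-degenerate ($f(x_{\infty})\neq0$) or null ($f(x_{\infty})=0$). By Lemma~\ref{asymptote à K} the non-degenerate case contains every geodesic that is not orthogonal to $K$, i.e.\ every $\gamma$ with $c\neq0$. There $\dot x^{2}=c^{2}-\epsilon f(x)\to c^{2}-\epsilon f(x_{\infty})$ and $\dot y=(c-\dot x)/f(x)$ stay bounded, and since $x(\gamma(t))$ converges so do $\dot x$ and $\dot y$; if $t_{\infty}^{+}$ were finite, $\gamma(t)$ and $\dot\gamma(t)$ would converge to finite limits at a point where the metric is smooth, so $\gamma$ would extend beyond $t_{\infty}^{+}$, contradicting maximality. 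Hence $t_{\infty}^{+}=\infty$.

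There remains the case $c=0$: then $\gamma$ is a reparametrized leaf of the orthogonal foliation $K^{\perp}$, and, being non-null, it cannot meet a null orbit of $K$ (at such a point $\gamma'\perp K$ would force $\gamma'$ null) --- which is precisely why such a geodesic never leaves one band --- so we may assume $x_{\infty}$ is an endpoint of $\mathbf{x}(B)$ with $f(x_{\infty})=0$. Here $\dot x^{2}=-\epsilon f(x)=|f(x)|$, so the affine time remaining is $\int dx/\sqrt{|f(x)|}$ taken near $x=x_{\infty}$, and one must show this integral diverges, i.e.\ that $f$ vanishes at $x_{\infty}$ to order at least $2$. This is the step I expect to be the main obstacle: a purely real-variable estimate is inconclusive, since at a simple zero the integral converges --- and by Remark~\ref{Remarque sur la nature des zéros} a null orbit with a simple zero is itself geodesically incomplete --- so one has to use information about which null orbits can bound a band, drawn from the structure theory of \cite{BM} and from the standing hypotheses on $X$, to exclude a simple zero at $x_{\infty}$. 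Once that is granted, $t_{\infty}^{+}=\infty$ in every case and $\gamma$ is complete.
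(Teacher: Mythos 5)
The paper does not prove this statement; it imports it from \cite[Proposition 2.14]{LM}, so I can only measure your attempt against what the argument must be. Your reduction via the two first integrals, the treatment of the $K$-invariant case, and the case $c\neq 0$ (where the limit leaf is non-degenerate, so $f$ is bounded away from $0$ near the limit, $\dot y=(c-\dot x)/f$ stays bounded, and a finite-time inextensible limit would contradict smoothness of the metric at an interior point) are all sound and are essentially the expected argument.

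The gap you flag in the orthogonal case $c=0$ is, however, genuine, and the repair you propose --- excluding a simple zero of $f$ at the band boundary --- is not available. Simple zeros at band boundaries are ubiquitous: by Remark~\ref{Remarque sur la nature des zéros} they are exactly the boundaries whose null orbit is an incomplete geodesic, and they occur around every saddle point and on every band of the (universal cover of the) Clifton--Pohl torus. Indeed, for $(\R^{2},\,2\,dx\,dy+x(x-1)\,dy^{2})$ the spacelike geodesics orthogonal to $K=\partial_{y}$ stay in the band $0\le x\le 1$ and have total affine length $\int_{0}^{1}dx/\sqrt{x(1-x)}<\infty$: the statement is simply \emph{false} without a standing hypothesis such as null completeness (equivalently, absence of saddles at infinity), which is the context in which \cite{LM} and Section~\ref{Section: complétude} of this paper operate. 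Under that hypothesis the resolution is not that the integral $\int dx/\sqrt{|f|}$ diverges; it is geometric. If the boundary zero has order at least $2$, the integral does diverge and you are done. If it is simple, the boundary null orbit is incomplete and is completed in $X$ by a saddle point $p$; one then checks (comparing the affine parameter of the null orbit with the $y$-coordinate, or working in saddle coordinates) that your orthogonal geodesic, which reaches $x=x_{\infty}$ in finite time with $y\to\infty$, converges in $X$ precisely to $p$ and extends through it into the opposite band. Hence such a geodesic does not ``lie in a band after a certain while'' at all --- the hypothesis of the proposition fails for it, rather than the conclusion being verified --- and this case must be discharged by contradiction with the trapping assumption, not by an estimate on $f$. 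Your write-up is missing both the needed standing hypothesis and this exit-through-the-saddle mechanism.
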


Now completeness reduces to non-null geodesics that a) either remain in a maximal ribbon as $t$ goes to  $t_{\infty}^{+}$ with unbounded $I^{+}$, or b) leaves any maximal ribbon. 

\begin{lemma}(\cite[Lemma 2.15]{LM}). \label{Behavior of geodesics, paragraph completeness}
	Let $\gamma$ be a non-null geodesic not perpendicular to $K$. Assume $\gamma$ cuts a null orbit of $K$, denoted by $\mathfrak{l}$. Then \\  
	(i)	 $\gamma$ does not cross a type II band containing $\mathfrak{l}$;  \\
	%	(ii) if $\gamma$ crosses a type III band containing $\mathfrak{l}$, $\gamma$ is tangent to $K$ in that band;\\
	(ii) if $\beta$ does not vanish, $\gamma$ lies in the maximal ribbon containing  $\mathfrak{l}$;\\
	(iii) if $\gamma$ is tangent to $K$ in the band containing $\mathfrak{l}$, then $\gamma$ leaves the maximal ribbon containing $\mathfrak{l}$. 
\end{lemma}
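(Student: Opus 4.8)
\textbf{Proof strategy for Lemma \ref{Behavior of geodesics, paragraph completeness}.}

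The plan is to exploit the explicit local form of the metric in a ribbon, $2\epsilon\,dx\,dy+f(x)\,dy^2$ with $K=\partial_y$, together with the Clairaut constant $C=\epsilon\langle T,K\rangle$ and the Jacobi-type function $\beta=-\epsilon\langle K,N\rangle$. In such coordinates the geodesic equations are first-integrable: $C=\epsilon(\dot y\,f(x)+\epsilon\dot x)$ is conserved along $\gamma$ (this is the Killing conservation law $\langle\dot\gamma,K\rangle=\mathrm{const}$), and the unit-speed normalization gives a second relation $2\epsilon\dot x\dot y+f(x)\dot y^2=\eta$ with $\eta\in\{\pm1\}$ according to the causal type of $\gamma$. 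Eliminating $\dot x$ between these two yields $f(x)\dot y^2=\eta-C^2+\epsilon^2$-type expression — more precisely one obtains $\dot x=\epsilon C-\epsilon f(x)\dot y$ and then $f(x)\,\dot y^2 = \eta - 2\epsilon\dot x\dot y\ldots$; the upshot is the single ODE relating $x$ and the sign data, where the null orbit $\mathfrak l$ sits at a value $x=x_0$ with $f(x_0)=0$. The hypothesis that $\gamma$ is not perpendicular to $K$ means $C\neq0$ (since perpendicularity to $K$ forces $\langle T,K\rangle=0$), and this is what lets us control the behaviour near $x_0$.

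For part (i): a type II band containing $\mathfrak l$ has, by definition, the orthogonal foliation $K^\perp$ being a Reeb component, so $\mathfrak l$ fails to be transverse to one of the null foliations and $f$ does not change sign at $x_0$ (the band lies on one side of the null orbit). I would show that as $x\to x_0$ with $f(x)\to0$ of definite sign, the conserved quantities force $\dot y\to\infty$ in such a way that $x$ cannot actually reach $x_0$ from the band side — equivalently, using Lemma \ref{asymptote à K}, a geodesic entering the band either is $K$-invariant (hence stays, and is tangent to $K$ nowhere unless it IS a leaf) or asymptotes a leaf of $K$; in the type II case the leaf it would have to asymptote is exactly $\mathfrak l$, but a non-perpendicular geodesic asymptoting $\mathfrak l$ would have to approach a null direction, contradicting that its causal type (timelike or spacelike) is preserved and that $\mathfrak l$ is null. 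So $\gamma$ cannot cross from one side of the type II band to the other through $\mathfrak l$. For part (ii): if $\beta$ does not vanish then $\gamma$ is never tangent to $K$ (at a point where $K$ is nondegenerate, $\beta=0\iff\gamma\parallel K$, and along $\mathfrak l$ where $K$ is degenerate one argues separately that $\beta\neq0$ persists by the Jacobi equation and continuity), so $\gamma$ crosses $\mathfrak l$ transversally; once across, the transverse coordinate $x$ moves monotonically past $x_0$ into the adjacent band, and by Proposition \ref{Complétude dans une bande} / Lemma \ref{asymptote à K} the geodesic is trapped there — it cannot leave the maximal ribbon because leaving would require re-crossing a null orbit where $\beta$ vanishes, contradiction. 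For part (iii): if $\gamma$ is tangent to $K$ at some point of the band containing $\mathfrak l$, i.e. $\beta$ vanishes there, then at that point $\dot\gamma$ is proportional to $\partial_y$; integrating the geodesic equation $\ddot x = \tfrac12 f'(x)\dot y^2$ shows $\ddot x\neq0$ (as $f'\neq0$ away from the flat locus and $\dot y\neq0$ since $C\neq0$), so $x$ has a strict extremum and $\gamma$ is pushed back toward and across $\mathfrak l$, hence out of the band and — continuing the same analysis on the other side — out of the maximal ribbon containing $\mathfrak l$.

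The main obstacle I anticipate is the behaviour of $\beta$ and $C$ exactly on the null orbit $\mathfrak l$, where $K$ is lightlike so the correspondence ``$\beta=0\iff\gamma$ tangent to $K$'' breaks down and the decomposition $K=CT+\beta N$ must be handled with care (the frame $(T,N)$ is still well-defined for a non-null $\gamma$, but $K$ being null there makes $C$ and $\beta$ satisfy a degenerate relation). I would resolve this by working on one side of $\mathfrak l$ where $f\neq0$, using continuity and the fact that $C$ is a genuine constant (so its nonvanishing at one point gives nonvanishing everywhere), and invoking Remark \ref{Remarque sur la nature des zéros} to distinguish whether $x_0$ is a simple zero of $f$ (incomplete null orbit) or not, since the rate at which $f\to0$ governs whether $x$ can reach $x_0$ in finite parameter time. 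The rest is bookkeeping with the two first integrals and the already-established Lemma \ref{asymptote à K} and Proposition \ref{Complétude dans une bande}.
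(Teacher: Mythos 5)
The paper does not prove this lemma: it is quoted verbatim from \cite[Lemma 2.15]{LM}, so there is no in-paper argument to measure your strategy against, and I can only assess it on its own terms. Your basic toolkit is the right one — the normal form $2\epsilon\,dx\,dy+f(x)\,dy^2$, the two first integrals, the resulting relation $\beta^2=\dot x^2=C^2-\epsilon f(x)$, the observation that non-perpendicularity means $C\neq 0$, and the centrality of tangency to $K$. But the argument for each part has a genuine gap. For (i), your characterization of a type II band ("$f$ does not change sign at $x_0$, the band lies on one side of the null orbit") is not what type II means: the sign behaviour of $f$ at a zero governs how the two \emph{adjacent} bands are glued in the space of leaves (Figure 7), not the type of either band. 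The relevant feature of a type II band (orthogonal foliation a Reeb component, as in the Clifton--Pohl quadrants) is that its two boundary null orbits are leaves of the two \emph{different} null foliations, so that each of the two maximal ribbons containing the band contains only one of them, the other sitting past the open end of that ribbon. Crossing the band would force the geodesic to traverse both boundary orbits transversally at finite Killing time, which is incompatible with the fact that the sign of $\dot x$ relative to $C$ — equivalently, which root of $f\dot y^{2}-2C\dot y+\eta=0$ the geodesic follows, the bounded one or the one with $\dot y\sim 2C/f$ — is fixed by the first transversal crossing and is the wrong one for the second. Your appeal to Lemma \ref{asymptote à K} cannot substitute for this: that lemma concerns geodesics that \emph{remain} in a band, and the leaf it produces is timelike or spacelike, so no contradiction with the nullity of $\mathfrak{l}$ arises because nothing forces the geodesic to asymptote $\mathfrak{l}$ in the first place.

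For (ii) and (iii) the confinement mechanism you describe is wrong at the key point. You write that leaving the maximal ribbon "would require re-crossing a null orbit where $\beta$ vanishes"; but on a null orbit $\epsilon f=0$, hence $\beta^{2}=C^{2}\neq 0$: when $C\neq 0$, $\beta$ \emph{never} vanishes on a null orbit of $K$. The zeros of $\beta$ are exactly the tangencies to $K$, they occur inside bands where $f\neq 0$, and they are the only exit from a maximal ribbon — which is why (ii) holds when $\beta$ has no zero: the transverse coordinate is then strictly monotone and the geodesic crosses every null orbit of the ribbon transversally. Conversely, for (iii), noting that $x$ has a strict extremum at the tangency is not enough: turning around in $x$ keeps the geodesic inside $I\times\R$. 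What expels it is that the turnaround switches the geodesic to the other root of the quadratic for $\dot y$, so that on approaching the next zero of $f$ one has $\dot y\sim 2C/f$ and $y$ blows up in finite parameter time; the geodesic then leaves the ribbon past the end of that null orbit, through a leaf of the other null foliation (equivalently, the generic reflection centred at the tangency point exchanges the two null foliations and maps the incoming half of the geodesic onto the outgoing half). Without this step neither (iii) nor the dichotomy between (ii) and (iii) is established.
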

In what follows, we may suppose that $x^{'} >0$ on a geodesic $\gamma$ transverse to $K$, by changing $K$ to $-K$ in the local chart, if necessary.
\begin{corollary}\label{Corrolaire: caractériser géodésiques traversant tout un ruban}
	1) If $\gamma_{\epsilon}$ remains in a maximal ribbon $R_f$ with unbounded $I^{+}$, then 
	\begin{itemize}[topsep=0pt, itemsep=0pt, parsep=0pt]
		\item either $C^2 > \displaystyle \sup_{x \in I^{+}} \epsilon f(x)$,
		\item or $C^2 = \displaystyle \sup_{x \in I^{+}} \epsilon f(x)$ and  $m:=\displaystyle \sup_{x \in I^{+}} \epsilon f(x) $ is not a critical point of $f$ on $I^{+}$. 
	\end{itemize}
	2) If $\gamma_{\epsilon}$ cuts a null orbit $\mathfrak{l}$ of $K$, of coordinate $x_0$, contained in a maximal ribbon $R_f$, and if $C^2 >  \displaystyle \sup_{x \geq x_0} \epsilon f(x)$, then $\gamma$ remains in $R_f$ with unbounded $I^{+}$. 
\end{corollary}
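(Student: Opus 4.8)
The plan is to reduce both statements to the single scalar equation satisfied by the transverse coordinate along a non--null geodesic. In a ribbon chart the metric reads $2\,dx\,dy+f(x)\,dy^{2}$ with $K=\partial_{y}$ (the signs in front of $dx\,dy$ and of $K$ do not affect what follows). Since $K$ is Killing, $\langle\dot\gamma,K\rangle=\dot x+f(x)\dot y$ is constant and its square equals $C^{2}$; the causal character gives the other constant $\langle\dot\gamma,\dot\gamma\rangle=2\dot x\dot y+f(x)\dot y^{2}=\epsilon$. Eliminating $\dot y$ between these two algebraic relations yields
$$\dot x^{2}=C^{2}-\epsilon f(x)$$
(equivalently $\dot x^{2}=\beta^{2}$, since $\langle K,K\rangle=\epsilon(C^{2}-\beta^{2})$ from $K=CT+\beta N$). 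Differentiating, and noting that a geodesic which is not an orbit of $K$ has $\{\dot x=0\}$ of empty interior, one gets the closed second--order equation
$$\ddot x=-\tfrac12\,\epsilon f'(x),$$
whose right--hand side is smooth; hence $x(\cdot)$ is determined by $\bigl(x(t_{0}),\dot x(t_{0})\bigr)$ at a single time, and the constant map $x\equiv c$ is a solution exactly when $f'(c)=0$. Everything below uses only these two displayed facts (together with $f\in C^{2}$).

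\emph{Part 1.} From $\dot x^{2}\ge0$ we get $\epsilon f(x)\le C^{2}$ for every $x\in I^{+}$, i.e. $\sup_{I^{+}}\epsilon f\le C^{2}$; if this inequality is strict we are in the first alternative. Otherwise $m:=\sup_{I^{+}}\epsilon f=C^{2}$, and I claim $m$ is not attained at a critical point of $f$ in $I^{+}$. Suppose $x_{*}\in I^{+}$ with $\epsilon f(x_{*})=C^{2}$ and $f'(x_{*})=0$. If $x_{*}$ is interior to $I^{+}$, then $x(t_{*})=x_{*}$ for some $t_{*}$, where $\dot x(t_{*})^{2}=C^{2}-\epsilon f(x_{*})=0$ and $\ddot x(t_{*})=-\tfrac12\epsilon f'(x_{*})=0$, so by uniqueness $x\equiv x_{*}$, contradicting that $I^{+}$ is unbounded. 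If $x_{*}$ is an endpoint of $I^{+}$, the same uniqueness argument applied to any would--be interior local extremum of $x$ (which would sit on the level $\{\epsilon f=C^{2}\}$, hence at a critical point of $f|_{I^{+}}$) rules out such extrema; therefore $x$ is eventually monotone, hence bounded on one side, which with the finite endpoint $x_{*}$ forces $I^{+}$ bounded --- again a contradiction.

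\emph{Part 2.} Put $t=0$ at an instant where $\gamma$ crosses $\mathfrak{l}$, so $x(0)=x_{0}$, $f(x_{0})=0$; then $\dot x(0)^{2}=C^{2}>0$ (and $C\ne0$, since $\sup_{x\ge x_{0}}\epsilon f\ge\epsilon f(x_{0})=0$), and we take $\dot x(0)>0$. The hypothesis gives, for every $x\ge x_{0}$ in $\textbf{x}(R_{f})$,
$$\dot x^{2}=C^{2}-\epsilon f(x)\ \ge\ C^{2}-\sup_{x\ge x_{0}}\epsilon f\ =:\ \delta^{2}>0 .$$
A bootstrap then yields $x(t)\ge x_{0}$ and $\dot x(t)\ge\delta$ on all of $[0,t_{\infty}^{+}[\,$: if $t_{1}$ were the first time one of these failed, continuity at $t_{1}$ would contradict its definition. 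Hence $\gamma$ advances monotonically through $R_{f}$ towards increasing $x$ at speed bounded away from $0$; in particular it is not eventually confined to a band (a band has bounded $x$--range) and does not asymptote a leaf of $K$ (by Lemma~\ref{asymptote à K} this would require $\dot x\to0$); therefore it stays in $R_{f}$ and $x(t)\to+\infty$, i.e. $I^{+}$ is unbounded.

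The step I expect to demand the most care is the ``no turning point / eventual monotonicity'' argument in Part 1: one must check that a zero of $\dot x$ necessarily sits where $\epsilon f$ attains its supremum $C^{2}$, distinguish interior from boundary positions in $I^{+}$, and in the boundary case deduce from eventual monotonicity plus a finite endpoint that $I^{+}$ is bounded; the degenerate alternative $x\to-\infty$ has to be absorbed into the orientation normalization $x'>0$. A minor point in Part 2 is that the monotone advance to the right cannot be halted by a zero of $K$ on the boundary of the maximal ribbon $R_{f}$, which is where maximality of $R_{f}$ (and $\textbf{x}(R_{f})$ being unbounded above) is used.
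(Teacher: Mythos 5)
Your proof is correct and follows essentially the same route as the paper: both parts rest on the Clairaut identity $\beta^2=\dot x^2=C^2-\epsilon f(x)$, with part 1 ruling out attainment of the level $\epsilon f=C^2$ at a critical orbit and part 2 extracting a uniform positive lower bound on $\dot x^2$. Where you argue by ODE uniqueness and a bootstrap, the paper instead invokes parts (ii)--(iii) of Lemma \ref{Behavior of geodesics, paragraph completeness} (a zero of $\beta$ forces the geodesic out of the maximal ribbon, a nonvanishing $\beta$ keeps it inside) and Lemma \ref{asymptote à K} (bounded $I^{+}$ forces an asymptote to a leaf with $\epsilon f=C^2$), which is also the cleanest way to close the ``stays in $R_f$'' point you flag at the end.
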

Before proving the corollary, note that 2) is a partial converse of 1): the property in the following remark also holds; it follows from \cite[Corollary 4.7]{LM}. 
\begin{remark}
	If $\gamma_{\epsilon}$ cuts a null orbit of $K$, of coordinate $x_0$, contained in a maximal ribbon $R_f$, and if $C^2 = \displaystyle \sup_{x \geq x_0} \epsilon f(x)$ and $m:=\displaystyle \sup_{x \geq x_0} \epsilon f(x) $ is not a critical value of $f$ for $x \geq x_0$, then $I^{+}$ is unbounded.
\end{remark}
\begin{proof}[\textbf{Proof of Corollary \ref{Corrolaire: caractériser géodésiques traversant tout un ruban}}]	
	1) It follows from assertion (iii) of Lemma \ref{Behavior of geodesics, paragraph completeness} that if $\gamma$ remains in a maximal ribbon  $R_f$ after a certain while, then $C^2 > \epsilon f(x)$, for all $x \in I^{+}$. Hence $C^2 \geq \displaystyle \sup_{x \in I^{+}} \epsilon f(x)$. 
	
	Now, if $m:=\displaystyle \sup_{x \in I^{+}} \epsilon f(x) < \infty$ is a critical value of $f$ on $I^{+}$, we claim that a geodesic $\gamma_C$ with $C^2 = m$ cannot cross this critical orbit transversally, for otherwise the formula $C^2 - \beta^2 = \epsilon f$ applied at a point that belongs to the orbit yields $m - \beta_0^2 = m$, with $\beta_0^2 >0$, which is impossible. It follows that $\gamma$ does not cross the whole ribbon. We conclude that if $I^{+}$ is unbounded and if $C^2 = \displaystyle \sup_{x \in I^{+}} \epsilon f(x)$, then $m$ is not a critical value of $f$ on $I^{+}$.\\
	
	2) The assumption $C^2 > \displaystyle \sup_{x \geq x_0} \epsilon f(x)$ implies, using (ii) Lemma \ref{Behavior of geodesics, paragraph completeness}, that $\gamma$ lies in the maximal ribbon containing $\mathfrak{l}$, after a certain while. Suppose, contrary to our claim, that $I^{+}$ is bounded; Lemma \ref{asymptote à K} implies that $\gamma$ asymptotically approaches a leaf of $K$, on which the norm of $K$ is $C^2$, a contradiction. 
\end{proof}
The following lemma characterizes completeness for geodesics contained in a maximal ribbon for $t$ large enough, with unbounded interval $I^{+} = x(\gamma(t)_{t \geq 0} )$. Note that if a timelike geodesic (resp. spacelike) remains in a ribbon $ R_f $ with unbounded $ I^{+}$, then  $ f $ is necessarily bounded below (resp. above) on $\R^{+}$, for otherwise any geodesic of the ribbon  would eventually leave the ribbon or asymptotically approach a leaf of $K$ (this is a consequence of Corollary \ref{Corrolaire: caractériser géodésiques traversant tout un ruban}, 1).
 
\begin{lemma}[\textbf{Semi-ribbon completeness criterion}]	\label{Complétude dans un ruban}
	Let $(X,K)$ be a simply connected null complete Lorentzian surface admitting a Killing field $K$. %Soit $\gamma$ une géodésique de type temps ou espace contenue dans un ruban maximal $R_f$ à partir d'un certain temps, 
%	Let $\epsilon = \pm 1$, and 
	Let $R_f$ be a maximal ribbon in $X$. Then, every geodesic contained in the semi-ribbon $R_{f_{|\R^{+}}}$ with unbounded interval $I^{+}$ is complete if and only if for all $\epsilon \in \{\pm1\}$ such that  $ \sup_{\R^{+}} \epsilon f < +\infty$, 
	\begin{itemize}[topsep=0pt, itemsep=0pt, parsep=0pt]
		\item there exists $M >0$ such that $\mu (\{-\epsilon f \leq M\} \cap \R^{+}) = \infty$ ($\mu$ is the Lebesgue measure).  \\
		or
		\item for all $M>0$, $\mu (\{-\epsilon f \leq M\} \cap \R^{+}) < + \infty$, and 
		\begin{align}\label{condition analytique complétude dans un ruban}
		\exists \alpha >0, \int_{\{-\epsilon f \geq \alpha\} \cap \R^{+}} \frac{dx}{\sqrt{- \epsilon f(x)}} = \infty.
		\end{align}
	\end{itemize}   
	In particular, if $ f$ is bounded, these geodesics are complete.
\end{lemma}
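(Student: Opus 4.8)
The plan is to reduce completeness to the integrability of a single one-variable function and then to elementary measure-theoretic bookkeeping. In the maximal ribbon $R_f$ the metric reads $2dxdy+f(x)dy^2$ with $K=\partial_y$, so along a unit non-null geodesic $\gamma$ of type $\epsilon$ the quantities $c=\langle\gamma',K\rangle$ (Clairaut) and $\langle\gamma',\gamma'\rangle=\epsilon$ are conserved; eliminating $\dot y$ gives the separated equation $\dot x^2=C^2-\epsilon f(x)$ (with $C^2=c^2$; this is the relation $C^2-\beta^2=\epsilon f$ of the previous subsection with $\dot x=\pm\beta$), together with $\dot y=(c-\dot x)/f$. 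With the standing normalization $\dot x>0$ and the hypothesis that $I^+$ is unbounded, $x$ increases monotonically from some $x_1$ to $+\infty$. By Corollary \ref{Corrolaire: caractériser géodésiques traversant tout un ruban}, $C^2\geq m:=\sup_{I^+}\epsilon f$, which is finite since $-\epsilon f$ is bounded below, and there are no turning points: if $C^2>m$ then $\dot x^2\geq C^2-m>0$, while if $C^2=m$ then $m$ is non-critical, hence not attained in the interior of $I^+$ (a supremum attained at an interior point would be a critical value), so again $\dot x>0$ throughout. One also checks that $|\dot y|$ stays bounded on this ray (it is bounded where $-\epsilon f\to\infty$, and at a zero of $f$, which forces $\langle\gamma',K\rangle=\dot x>0$ there, an elementary expansion gives a finite limit). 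Hence, by the reduction recalled before the statement, completeness of such a $\gamma$ is equivalent to
$$t_\infty^+=\int_{x_1}^{+\infty}\frac{dx}{\sqrt{C^2-\epsilon f(x)}}=+\infty.$$

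Suppose first that $\mu(\{-\epsilon f\leq M\}\cap\R^+)=\infty$ for some $M>0$ (the first bullet). On that set $C^2-\epsilon f=C^2+(-\epsilon f)\leq C^2+M$, so the integrand is bounded below by $(C^2+M)^{-1/2}$ there, whence $t_\infty^+=+\infty$ regardless of $C$. This argument uses neither the "unbounded above" hypothesis nor any information on $\gamma$, and applying it with $M=\sup(-\epsilon f)$ yields the final sentence of the lemma: if $f$ is bounded, every such geodesic is complete.

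Suppose now that $\mu(\{-\epsilon f\leq M\}\cap\R^+)<\infty$ for every $M>0$. Fix $\alpha>0$ and split $\R^+=\{-\epsilon f<\alpha\}\sqcup\{-\epsilon f\geq\alpha\}$. On the second piece $\alpha\leq-\epsilon f$, so $-\epsilon f\leq C^2-\epsilon f\leq(1+C^2/\alpha)(-\epsilon f)$; consequently $\int_{\{-\epsilon f\geq\alpha\}\cap\R^+}(C^2-\epsilon f)^{-1/2}dx$ and $\int_{\{-\epsilon f\geq\alpha\}\cap\R^+}(-\epsilon f)^{-1/2}dx$ converge or diverge together, with the $C$-independent lower bound $(C^2-\epsilon f)^{-1/2}\geq(\alpha/(C^2+\alpha))^{1/2}(-\epsilon f)^{-1/2}$. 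The first piece has finite measure and there $C^2-\epsilon f\geq C^2-m$, so if $C^2>m$ its contribution to $t_\infty^+$ is finite. Therefore, if $\int_{\{-\epsilon f\geq\alpha\}\cap\R^+}(-\epsilon f)^{-1/2}dx=\infty$ for some $\alpha>0$ (the second bullet), then $t_\infty^+\geq(\alpha/(C^2+\alpha))^{1/2}\int_{\{-\epsilon f\geq\alpha\}\cap\R^+}(-\epsilon f)^{-1/2}dx=+\infty$ for every admissible geodesic, so all are complete; whereas if this integral is finite for every $\alpha$, then a geodesic with $C^2>m$, which exists (e.g. by Corollary \ref{Corrolaire: caractériser géodésiques traversant tout un ruban}, 2, or by solving the geodesic equation in $R_f$), has both pieces of $t_\infty^+$ finite, hence is incomplete. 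Together with the previous paragraph this gives the stated equivalence.

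The main difficulty is not the estimates but the logical architecture, since completeness of a given geodesic depends on its Clairaut constant while the criterion depends only on $f$: one must both force completeness uniformly in $C$ when a bullet holds (for which the $C$-independent lower comparison on $\{-\epsilon f\geq\alpha\}$ is essential) and produce a single incomplete geodesic when both bullets fail (for which one uses the geodesics with $C^2>\sup_{I^+}\epsilon f$ supplied by Corollary \ref{Corrolaire: caractériser géodésiques traversant tout un ruban}). A secondary point requiring care is the reduction "$\gamma$ complete $\iff t_\infty^+=+\infty$" itself, which needs the absence of turning points (via the non-criticality clause of Corollary \ref{Corrolaire: caractériser géodésiques traversant tout un ruban}) and the boundedness of $\dot y$; it is here that null completeness enters, guaranteeing that the hypotheses under which Corollary \ref{Corrolaire: caractériser géodésiques traversant tout un ruban} and Lemma \ref{Behavior of geodesics, paragraph completeness} were established are in force.
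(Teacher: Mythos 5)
Your proof is correct and follows essentially the same route as the paper: reduction to the divergence of the Clairaut integral $\int dx/\sqrt{C^2-\epsilon f}$, the measure lower bound for the first bullet, the two-sided comparison with $(-\epsilon f)^{-1/2}$ on the set where $-\epsilon f$ is large for the second, and the use of Corollary \ref{Corrolaire: caractériser géodésiques traversant tout un ruban}, 2) to produce an incomplete geodesic when both conditions fail. The only (harmless) deviations are that you justify the reduction to $t_\infty^+$ in more detail and handle the borderline case $C^2=\sup\epsilon f$ via a $C$-uniform lower comparison, where the paper instead invokes its monotonicity-in-$C$ observation.
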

In the sequel, the condition in Lemma \ref{Complétude dans un ruban} will be referred to as the (SRC) condition.
\begin{proof} We start with the following observation:\\
	\textbf{Observation:} if a geodesic $ \gamma_{C_0} $ which remains in the semi-ribbon is complete, then any geodesic $ \gamma_C $ of the semi-ribbon such that $ | C | \leq | C_0 | $, is complete too. This follows from the fact that the time spent in the semi-ribbon is a decreasing function of $ C $.
	
	Fix $\gamma_{\epsilon}$ a geodesic  contained in $R_f$ for $x \geq x_0, x_0 \in \R$. The geodesic is complete if and only if the integral $t_{\infty}^{+} = \int_{x_0}^{\infty} \frac{dx}{\sqrt{C^2 -\epsilon f(x)}}$, where $C$ is the Clairaut constant of $\gamma$, diverges. 
	The first part of the Lemma follows from the following inequality  $$ \int_{\{-\epsilon f \leq M\} \cap \{x \geq x_0 \}} \frac{dx}{\sqrt{C^2-\epsilon f(x)}} \geq \frac{\mu \{\{- \epsilon f \leq M\} \cap \{x \geq x_0 \}\}}{\sqrt{C^2 +M}}.$$
	Assume now that for all $M>0$, $\mu (\{-\epsilon f \leq M\} \cap \R^{+}) < + \infty$. \;\;\; (*)\\
	%Si $C \neq 0$, on a  en particulier $\mu \{-\epsilon f \leq C^2\} \cap \R^{+} < + \infty$. 
	Since $\gamma$ is contained in $R_f$ for $x \geq x_0$, Corollary \ref{Corrolaire: caractériser géodésiques traversant tout un ruban} yields $C^2 \geq \displaystyle \sup_{x \geq x_0} \epsilon f(x)$. If $C^2 > \displaystyle \sup_{x \geq x_0} \epsilon f(x)$, then $C^2 \geq \displaystyle \sup_{x \geq x_0} \epsilon f(x) + \eta$, where $\eta >0$, and the inequality  $$\int_{\{-\epsilon f \leq C^2 \} \cap \{x \geq x_0 \}} \frac{dx}{\sqrt{C^2- \epsilon f(x)}} \leq \frac{\mu\{\{-\epsilon f \leq C^2 \} \cap \{x \geq x_0 \}\}}{\sqrt{\eta}}$$
	implies the convergence of the integral on the left.\\
	Therefore,  $\int_{x_0}^{+\infty} \frac{dx}{\sqrt{C^2 -\epsilon f(x)}}$ diverges if and only if $\int_{\{-\epsilon f \geq C^2\} \cap \{x \geq x_0 \}} \frac{dx}{\sqrt{C^2 -\epsilon f(x)}}$ does too. 
	Now write  $$\int_{\{-\epsilon f \geq C^2\} \cap \R^{+}} \frac{dx}{\sqrt{2}\sqrt{-\epsilon f(x)}} \leq \int_{\{-\epsilon f \geq C^2\} \cap \R^{+}} \frac{dx}{\sqrt{C^2 -\epsilon f(x)}} \leq \int_{\{-\epsilon f \geq C^2\} \cap \R^{+}} \frac{dx}{\sqrt{-\epsilon f(x)}} .$$
	These inequalities show that $\gamma$ is complete if and only if $\int_{\{-\epsilon f \geq C^2\} \cap \R^{+}} \frac{dx}{\sqrt{-\epsilon f(x)}} = \infty$, which is equivalent to the condition of the Lemma when (*) is satisfied.
	
	\vspace{0.2cm}
	Finally, if for all $ M> 0 $, $\mu (\{-\epsilon f \leq M\} \cap \R^{+}) < + \infty$, and if the condition (\ref{condition analytique complétude dans un ruban}) is satisfied, then for all $x_0 \in \R$, the geodesics such that $C^2 > \displaystyle \sup_{x \geq x_0} \epsilon f(x)$ are complete, and their completeness leads also to that of geodesics such as $ C^2 = \displaystyle \sup_{x \geq x_0} \epsilon f(x) $, using the above observation. Hence we obtain the completeness of all the geodesics that remain in the semi-ribbon. Assume now that condition (\ref{condition analytique complétude dans un ruban}) is not satisfied; for all $x_0 \in \R$ and $ C^2> \displaystyle \sup_{x \geq x_0} \epsilon f(x) $, there is a geodesic $ \gamma $ remaining in the semi-ribbon with unbounded interval $I^{+}$, and whose Clairaut constant is $ C $ (see Corollary \ref{Corrolaire: caractériser géodésiques traversant tout un ruban}, 2), and we have shown that such a geodesic is then incomplete. This ends the proof.
\end{proof}
\begin{corollary}[\textbf{Completeness of a null complete saddle}]\label{Complétude d'une selle L-complète}
	Let $(S,K)$ be a null complete saddle defined by giving a branching of order $4$, together with  $4$ functions $f_i, i \in \Z/4\Z$, defined on $(\sigma_i)_{i \in \Z/4\Z}$ de $\Sigma$, such that $f_i$ and $f_{i+1}$ have the same germ at the point $b_i = \bar{\sigma}_i \cap \bar{\sigma}_{i+1}$. Then $S$ is complete if and only if for all $i \in \Z/4\Z$, $f_i$ satisfies the (SRC) condition of Lemma \ref{Complétude dans un ruban}. 
\end{corollary}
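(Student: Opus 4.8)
The plan is to reduce completeness of the saddle $S$ to completeness of geodesics inside each of the four maximal ribbons that make it up, and then invoke Lemma \ref{Complétude dans un ruban} on each ribbon. First I would recall that, by the structure results of Section 2, a null complete saddle $(S,K)$ decomposes into four type III bands (equivalently, four maximal ribbons $R_{f_i}$, $i\in\Z/4\Z$) glued along the null orbits $b_i$, together with the fixed saddle point $o$. Every maximal geodesic $\gamma$ of $S$ is either null, or non-null; null geodesics are complete by hypothesis (null completeness), so it suffices to treat a non-null maximal geodesic $\gamma$ and show it is complete precisely when each $f_i$ satisfies the (CR) condition.

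Next I would split into cases according to the behaviour of $\gamma$ relative to the ribbons, using the trichotomy established before Lemma \ref{Complétude dans un ruban} and the tools of Lemma \ref{Behavior of geodesics, paragraph completeness} and Corollary \ref{Corrolaire: caractériser géodésiques traversant tout un ruban}. If $\gamma$ eventually stays in a band, it is complete by Proposition \ref{Complétude dans une bande}, with nothing to prove on that side. If $\gamma$ eventually remains in one maximal ribbon $R_{f_i}$ with unbounded $I^+$, then by Corollary \ref{Corrolaire: caractériser géodésiques traversant tout un ruban} and the fact that a type III band near infinity forces $-\epsilon f_i$ to be bounded below and unbounded above (this is exactly the situation of Lemma \ref{Complétude dans un ruban}), the completeness of $\gamma$ on the $t\to t_\infty^+$ side is governed by the (CR) condition for $f_i$. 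The remaining possibility is that $\gamma$ leaves every maximal ribbon; since there are only four ribbons and they are arranged cyclically around the saddle, $\gamma$ must then either hit the saddle point $o$ (impossible for a maximal geodesic, or rather it terminates there and one checks directly whether it is complete — in a saddle the geodesics through $o$ are the separatrices, which are null and hence complete by hypothesis), or it passes through the central configuration and crosses from one ribbon to an adjacent one; by Lemma \ref{Behavior of geodesics, paragraph completeness}(i) it cannot cross a type II band, and in the type III setting one checks that after finitely many crossings $\gamma$ either exits to infinity inside a single ribbon (reducing to the previous case) or is periodic under a flow of $K$ (hence complete). Doing this on both ends $t_\infty^+$ and $t_\infty^-$ and combining, $\gamma$ is complete iff both of the ribbons it visits at its two ends satisfy (CR); letting $\gamma$ range over all non-null geodesics, and noting every ribbon $R_{f_i}$ is visited at infinity by some geodesic, yields that $S$ is complete iff all four $f_i$ satisfy (CR).

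The main obstacle I expect is the careful bookkeeping of a non-null geodesic that repeatedly crosses the central region of the saddle, i.e. passing from $R_{f_i}$ to $R_{f_{i+1}}$ near the branch point $b_i$ where $f_i$ and $f_{i+1}$ share a germ: one must argue that such a geodesic cannot accumulate infinitely many crossings in finite parameter time (which would produce incompleteness unrelated to the (CR) condition) and that, away from finitely many crossings, its fate is dictated by a single ribbon. This is where the hypothesis that $b_i$ is a simple or multiple zero shared by adjacent $f_i$, together with Remark \ref{Remarque sur la nature des zéros} and the null-completeness assumption (which forbids saddles at infinity), is used: the null orbits $b_i$ are traversed in finite time only if they are simple zeros, but then they are attached to the fixed saddle point and the local model is the standard saddle, in which the crossing behaviour is explicit. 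Once this local analysis at $o$ and at the $b_i$ is in hand, the global statement follows by assembling the per-ribbon conclusions of Lemma \ref{Complétude dans un ruban}, and the ``only if'' direction is immediate from Remark \ref{C critique}: if some $f_i$ fails (CR) then there is an incomplete geodesic with Clairaut constant $C^*=\sup\epsilon f_i$ remaining in $R_{f_i}$, which is a fortiori an incomplete geodesic of $S$.
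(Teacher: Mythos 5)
Your overall strategy is the one the paper intends (it states this corollary without a written proof): decompose the saddle into its four maximal ribbons, dispose of null geodesics by hypothesis and of geodesics trapped in a band by Proposition \ref{Complétude dans une bande}, reduce geodesics escaping to infinity in a ribbon to Lemma \ref{Complétude dans un ruban}, and obtain the ``only if'' direction from the existence of incomplete geodesics when (CR) fails. The gap sits exactly where you place ``the main obstacle'': the case of a non-null geodesic that does not eventually remain in a single maximal ribbon. You assert that after finitely many crossings $\gamma$ either exits to infinity inside a single ribbon or is ``periodic under a flow of $K$'', and you acknowledge that one must exclude infinitely many crossings accumulating in finite parameter time --- but you never actually do so; the appeal to the local model at $o$ and to Remark \ref{Remarque sur la nature des zéros} does not by itself produce that argument. (Also, ``periodic under a flow of $K$'' is not one of the alternatives: Lemma \ref{asymptote à K} gives either invariance under the flow or asymptotic approach to a leaf, and in both cases one simply invokes Proposition \ref{Complétude dans une bande}.)

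The missing step can be closed in either of two ways. First, use the sign alternation forced by the simple zeros at the $b_i$: since $f_i$ and $f_{i+1}$ share the germ of a simple zero at $b_i$, the sign of $f$ alternates around the four bands, say $\epsilon f>0$ on $\sigma_1,\sigma_3$ and $\epsilon f<0$ on $\sigma_2,\sigma_4$. A type-$\epsilon$ geodesic with Clairaut constant $C\neq 0$ can be tangent to $K$ only where $\epsilon f=C^2>0$, hence only in $\sigma_1$ or $\sigma_3$; after such a tangency it re-crosses a branch point into $\sigma_2$ or $\sigma_4$, where $(x')^2=C^2-\epsilon f\geq C^2>0$, so $x$ is monotone and unbounded and no further tangency occurs. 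Thus every non-null, non-orthogonal geodesic turns at most once and falls into one of your first two cases; an orthogonal geodesic ($C=0$) passes once through the saddle point and escapes in two opposite bands, and its completeness condition $\int dx/\sqrt{-\epsilon f_i}=\infty$ is implied by either alternative of (CR), so it creates no extra hypothesis. Alternatively --- and this is the argument that also proves the subsequent corollary on small surfaces --- if a geodesic were tangent to $K$ infinitely often, each arc between consecutive tangencies is one of at most finitely many possible arcs (four bands, fixed $C$), each traversed in a fixed positive time, so such a geodesic would be complete anyway. Without one of these two arguments your case analysis is not closed; with either of them the proof is complete. A last minor point: your claim that a type III band near infinity forces $-\epsilon f_i$ to be unbounded above is not correct in general --- if $-\epsilon f_i$ is bounded, the last assertion of Lemma \ref{Complétude dans un ruban} gives completeness unconditionally and (CR) is vacuous, which does not harm the equivalence but should be said.
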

\begin{corollary}[\textbf{Completeness of universal extensions}] Let $X=E^u_f$ (in particular, analytic surfaces), and assume $X$ is null complete, i.e. $f$ is defined over $\R$. Then $X$ is complete if and only if $R_{f_{|\R^+}}$ and $R_{f_{|\R^-}}$ satisfy the (SRC) condition in Lemma \ref{Complétude dans un ruban}.
\end{corollary}
\begin{proof}
If $ \gamma $ is a geodesic in the universal extension which leaves any maximal ribbon, then $ \gamma $ is invariant by an isometry of the universal extension,  acting on $\gamma$ by a translation of the geodesic parameter (see \cite[Lemma 4.8]{LM}), proving that $\gamma$ is complete. It is called an "invariant" or "periodic geodesic" in \cite{LM}. Now, for geodesics remaining in a maximal semi-ribbon with unbounded $I^+$, the geodesic completeness is equivalent to the (SRC) condition, which ends the proof.
\end{proof}
\begin{corollary}\label{Complétude des extensions universelles des tores}
The extension $E^u_f$ associated to a Lorentzian torus $(T,K)$ is  complete. 
\end{corollary}
\begin{definition}[\textbf{Critical Clairaut constant}]\label{C critique, définition}
According to the observation in the proof of Lemma \ref{Complétude dans un ruban}, we can define,
\begin{align*}
%C^+_{\epsilon}(x_0):&= \sup\{|C| \geq \sup_{x \geq x_0} \epsilon f, \,\gamma_C \textrm{\;of type\;} \epsilon \textrm{\; is complete}\},\\
C^*:=\inf&\{|C|; \textrm{\;there exists a geodesic\;}\,\gamma_C \textrm{\;with Clairaut constant\;} C, \textrm{\;contained in \;} R_{f_{|\R^{+}}} \textrm{\;with}\\ 
&\textrm{unbounded\;} I^+, \textrm{\; such that\;} \gamma_C \textrm{\;is incomplete}\}.
\end{align*}
%Obviously we have $C^+_{\epsilon}(x_0)=C^-_{\epsilon}(x_0)$. We define $ C^*_{\epsilon}:= \inf_{x_0 \in \R^{+}} C^+_{\epsilon}(x_0) $. 
\end{definition} 
\begin{remark}\label{C critique, remarque SRC}
1) The critical Clairant constant depends on $K$ (on the norm of it). \\
2) One can also define $C^*_{\epsilon}$, for $\epsilon \in \{\pm 1\}$, by considering only $\epsilon$-type geodesics in the Definition \ref{C critique, définition} above.\\
3) The (SRC) condition in Lemma \ref{Complétude dans un ruban} is equivalent to $C^* = + \infty$.
\end{remark}
\begin{proposition}\label{C critique, proposition comportement}
	Let $(X,K)$ be a simply connected null complete Lorentzian surface admitting a Killing field $K$. 	Let $R_f$ be a maximal ribbon in $X$, and $C^*$ the critical Clairaut constant associated to the semi-ribbon  $R_{f_{|\R^+}}$. Then
	\begin{enumerate}[topsep=0pt, itemsep=0pt, parsep=0pt]
		\item if $C^* = +\infty$, then all geodesics remaining in the semi-ribbon with unbounded $I^+$ are complete,
		\item if $C^* < + \infty$, then $C^*= C^*_{\epsilon} < + \infty$\, for some $\epsilon \in \{\pm 1\}$, with $C^*_{\epsilon} = \displaystyle \inf_{x_0 \in \R^{+}} \max \{\sup_{x \geq x_0} \epsilon f, 0\}$, and $C^*_{-\epsilon}=+\infty$. In this case, all $\epsilon$-type geodesics remaining in the semi-ribbon with Clairaut constant $|C|>C^*_{\epsilon}$ are incomplete. 
	\end{enumerate}
\end{proposition}
\begin{proof}
This comes from the proof of Lemma \ref{Complétude dans un ruban} and Remark \ref{C critique, remarque SRC}, 3), above.
\end{proof}
\begin{remark}
1) When  $C^*= C^*_{\epsilon} < + \infty$, then $\epsilon$-type geodesics with Clairaut constant  $C=C^*_{\epsilon}$, remaining in the semi-ribbon with unbounded $I^{+}$, could be complete or incomplete (see Remark \ref{C critique selle} below). \\
2) If $C^* < + \infty$ and if the semi-ribbon has an infinite band, i.e. $f_{|\R^+}$ has constant sign $\eta \in \{\pm 1\}$ at infinity, then $ C^*=C^*_{-\eta} = 0$.
\end{remark}
%\begin{remark}
%It then appears from the proof of the previous lemma that either $C^*_{\epsilon}=+\infty$, in which case all $\epsilon$-type geodesics remaining in the semi-ribbon after a certain while are complete, or $C^*_{\epsilon}= \inf_{x_0 \in \R^{+}} \max \{\sup_{x \geq x_0} \epsilon f, 0\}$, in which case all $\epsilon$-type geodesics remaining in the semi-ribbon with unbounded $I^{+}$ and with Clairaut constant $|C| > C^*_{\epsilon}$ are incomplete. In particular, if the semi-ribbon has an infinite band, then $ C^*_{\epsilon} = 0 $.
%\end{remark}

\begin{remark}\label{C critique selle}
For a null complete saddle, it follows from Proposition \ref{C critique, proposition comportement} above that either the saddle is complete, or $\epsilon$-type geodesics, for some $\epsilon \in \{\pm 1\}$, not orthogonal to $K$ and remaining in some semi-ribbon are all incomplete, i.e. $C^*_{\epsilon}=0$ for this semi-ribbon. In Paragraph \ref{section: examples and non-examples} we have an example of an incomplete saddle with incomplete orthogonal geodesics (see Example \ref{Exemple 2.3.19}), and an example of an incomplete saddle in which spacelike and timelike geodesics orthogonal to $K$ are complete and the others are incomplete (see Example \ref{exemple selle incomplète et orthogonal complète}). %It is not clear, however, whether there are examples in the latter case.
\end{remark}
\subsection{Small surfaces}
\begin{definition}[\textbf{Small surfaces}]\label{Définition petites surfaces}
	We call  "a small surface" any maximal and simply connected Lorentzian surface, with a Killing field, containing a finite number of ribbons.
\end{definition}
\begin{proposition}
	Let $\mathcal{E}$ be a $1$-dimensional manifold with topology $\mathfrak{T}$. There is a small surface $(X,K)$ such that $\mathcal{E}_X \simeq \mathcal{E}$ if and only if $B$ is finite.
\end{proposition}
\begin{proof}
	Consider $(X,K)$ a small surface. Since $X$ has a finite number of ribbons, then it has necessarily a finite number of (null or non-null) bands. Indeed, if $R$ is a ribbon with infinitely many non-null bands, then any maximal surface containing $R$ has infinitely many ribbons. So any ribbon in $X$ has a finite number of non-null bands, hence a finite number of null bands as well. This implies that $\mathcal{E}_X$ has a finite number of branch points. The converse is a consequence of Corollary \ref{Corollaire 2 (cas complet): caractérisation top de l'espace des feuilles}.
\end{proof}
%The space of leaves of a small surface has a finite number of branch points. 

Let $\mathcal{E}$ be a $1$-dimensional manifold with topology $\mathfrak{T}$. When $\mathcal{E}=\mathcal{E}_X$, we defined a null component of $\mathcal{E}_X$ to be the space of leaves of a null band in $X$ (Definition \ref{Definition null band - null component}). They have Hausdorff closure, and when $X$ is maximal, they are the only elements of $\Sigma$ with Hausdorff closure. %Here, we call again a null component of $\mathcal{E}$ the closure of an element of $\Sigma$ which is Haus
\begin{definition}\label{Definition Y-piece, generalized Y-piece}
	1) A $\mathcal{Y}$-manifold is a $1$-dimensional manifold homeomorphic to a simple branching line, i.e. to the quotient space of two copies of the real line $\R \times \{a\}$ and $\R \times \{b\}$ with the equivalence relation ${\displaystyle (x,a)\sim (x,b){\text{ if }}x<0}$.  \\
	2) A generalized $\mathcal{Y}$-manifold is a $1$-dimensional manifold homeomorphic to the quotient space of a finite number of copies of the real line $\R \times \{a_1\}$,...,$\R \times \{a_n\}$ with the equivalence relation ${\displaystyle (x,a_i)\sim (x,a_{i+1}){\text{ if }} (-1)^{i+1} x<0}$. \\
	3) A $4$-order branching line is a $1$-dimensional manifold homeomorphic to the quotient space of $4$ real lines $\R \times \{a_i\}, i \in \Z/4\Z$, with the equivalence relation ${\displaystyle (x,a_i)\sim (x,a_{i+1})}$  if $(-1)^{i+1} x<0, \forall i \in \Z/4\Z$.  
\end{definition}
%\begin{definition}
%Let $\mathcal{E}$ be a $1$-dimensional manifold with topology $\mathfrak{T}$. \\
%1) A $\mathcal{Y}$-piece of $\mathcal{E}$ is the union of a simple branching and the elements $\sigma, \sigma' \in \Sigma$, such that $p \in \bar{\sigma}$ and $p' \in \bar{\sigma'}$, where $p$ and $p'$ are the simple branch points of the simple branching, and $ \bar{\sigma} $ and $ \bar{\sigma'}$ are Hausdorff.  \\
%2) A generalized $\mathcal{Y}$-piece of $\mathcal{E}$ is the union of a finite chain of simple branchings and the elements $\sigma, \sigma' \in \Sigma$, such that $p \in \bar{\sigma}$ and $p' \in \bar{\sigma'}$, where $p$ (resp. $p'$) is the left-boundary (resp. right-boundary) of the chain, and $\bar{\sigma} $ and $\bar{\sigma'}$ are Hausdorff.   
%\end{definition}
\begin{figure}[h!] 
	\labellist 
	\small\hair 2pt 
	\endlabellist 
	\centering 
	\includegraphics[scale=0.15999]{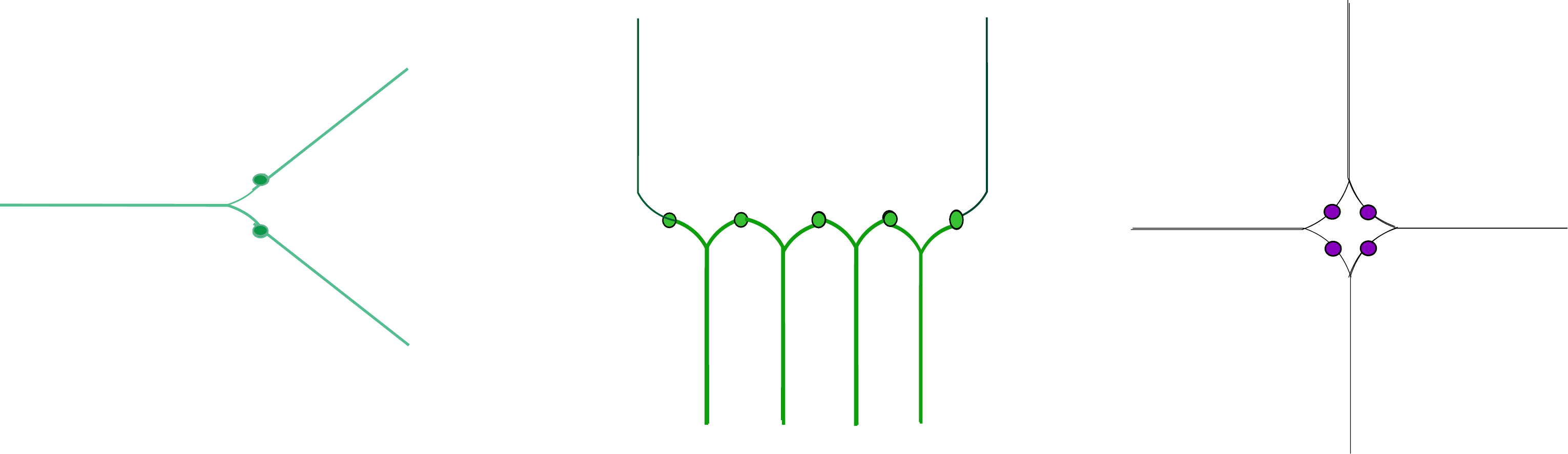} \caption{a $\mathcal{Y}$-piece;\; a generalized $\mathcal{Y}$-piece;\; a $4$-order branching line}
\end{figure} 

\textbf{Notation:} The family of $1$-dimensional manifolds homeomorphic to a generalized $\mathcal{Y}$-manifold or to a $4$-order branching line will be denoted by $\mathfrak{F}$.
\begin{definition}
	Let $F, F' \in \mathfrak{F}$; we say that two elements $\sigma \in \Sigma, \sigma' \in \Sigma'$  are of the same type if $\sigma \in \Sigma_1$ and $\sigma' \in \Sigma'_1$ or $\sigma \in \Sigma_0$ and $\sigma' \in \Sigma'_0$.
\end{definition}
Let $\mathcal{E}$ be a $1$-dimensional manifold with topology $\mathfrak{T}$. An open subset of $\mathcal{E}$ is called a generalized $\mathcal{Y}$-piece of $\mathcal{E}$ if it is homeomorphic to a generalized $\mathcal{Y}$-manifold and is maximal in $\mathcal{E}$ for this property. A generalized $\mathcal{Y}$-piece of $\mathcal{E}$ is then the union of a finite chain of simple branchings and the elements $\sigma, \sigma' \in \Sigma$, such that $p \in \bar{\sigma}$ and $p' \in \bar{\sigma'}$, where $p$  and $p'$ are the boundaries of the finite chain, and $\bar{\sigma} $ and $\bar{\sigma'}$ are Hausdorff. Similarly, the $4$-order branchings of $\mathcal{E}$ as defined in Definition \ref{Définition: branchement d'ordre n} are the maximal $4$-order branching lines contained in $\mathcal{E}$.
%\end{definition}

\begin{proposition}[Decomposition of small surfaces]
Let $(X,K)$ be a small surface, then $\mathcal{E}_X$ is the gluing of a finite number of generalized $\mathcal{Y}$-pieces and $4$-order branchings of $\mathcal{E}$ along elements of $\Sigma$ of the same type. Conversely, let $\mathcal{E}$ be a $1$-dimensional manifold obtained as a finite gluing of generalized $\mathcal{Y}$-manifolds and $4$-order branching lines in such a way that:\\
i) any two $F, F' \in \mathfrak{F}$ are glued along $\sigma \in \Sigma$ and $\sigma' \in \Sigma'$  of the same type;\\
ii) $ \mathcal{G}(\mathcal{E})$ is a tree. \\
If $\mathcal{E}$ has only simple branch points, then it is the space of leaves of a small surface.%, such that the generalised $\mathcal{Y}$-pieces and $4$-order branchings of $\mathcal{E}$ are exactly $F_1,..,F_n$. 
\end{proposition}
\begin{proof}
%Since $X$ has a finite number of ribbons, then it has necessarily a finite number of (null or non-null) bands. Indeed, if $R$ is a ribbon with infinitely many non-null bands, then any maximal surface containing $R$ has infinitely many ribbons. So any ribbon in $X$ has a finite number of non-null bands, hence a finite number of null bands as well. 
The manifold $\mathcal{E}_X$ has a finite number of branch points, so $\mathcal{G}(\mathcal{E})$ has a finite number of vertices $\{s_1,..,s_n\}$.  For all $i \in \{1,..,n\}$, any simple branching with branch points in the class $s_i$ is contained in (the same) $4$ order branching, or in a finite chain of simple branchings; since $X$ is maximal, if $p$ is in the boundary of this chain, then $p$ is the boundary of a null component  (Fact \ref{Fact le bord d'une chaine finie se trouve dans une null component}). This means that branch points in the class $s_i$ are contained in a $4$-order branching line or in a generalized $\mathcal{Y}$-piece of $\mathcal{E}_X$. This gives a finite number $n+1$  of generalized $\mathcal{Y}$-pieces and $4$-order branchings in $\mathcal{E}_X$ glued along elements of $\Sigma$ of the same type. 
To do the converse, note that $\mathcal{E}_X$, where $X$ is maximal,  satisfies the property $(\mathcal{P})$. 
In consequence, if $\mathcal{E}$ is a finite gluing of generalized $\mathcal{Y}$-manifolds and $4$-order branching lines such that $\mathcal{E}= \mathcal{E}_X$, where $X$ is a small surface, then $\mathcal{E}$ satisfies condition i) (and ii)) of the proposition. Now, such a construction gives a $1$-dimensional manifold with topology $\mathfrak{T}$, which has a finite number of branch points, so it is the space of leaves of a small surface.

\end{proof}
%Then $ \mathcal{E}_X $ contains only branches of order $ 4 $, in finite number which we denote by $ n $. As there can be no cycles of such branches in $ \mathcal{E}_X $, this means that the tree $ \mathcal {G}(\mathcal{E}_X) $ contains $ 2n + 2 $ infinite branches. These infinite branches correspond to the infinite bands in $ X $.
\begin{corollary}[\textbf{Completeness of small surfaces}] Let $ (X, K) $ be a small surface. %Let $ (f_i)_{i \in I} $, $ I: = \N_{\leq 2n + 1} $, the $ 2n + 2 $ functions corresponding to the infinite bands in $ X $. 
Then $ X $ is complete if and only if all the ribbons contained in $X$ satisfy the (SRC) condition. In particular, small surfaces with bounded function $F$ are complete.
\end{corollary}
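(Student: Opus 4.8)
The plan is to classify the maximal geodesics of a small surface and to reduce geodesic completeness to the (CR) condition inside each of the finitely many maximal ribbons, relying on the behaviour-of-geodesics results of this section. Throughout I will use that, for a small surface, $\mathcal{E}_X$ has only finitely many branch points, $X$ contains only finitely many bands and finitely many maximal ribbons, and that $\mathcal{G}(\mathcal{E}_X)$ is then a finite tree (Proposition \ref{Caractérisation topologique de la variété des feuilles}).

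For the implication ``$X$ complete $\Rightarrow$ every ribbon satisfies (CR)'' I would argue as follows. If $X$ is complete it is in particular null complete, so Lemma \ref{Complétude dans un ruban} applies to each maximal ribbon $R_f$. If $f$ is bounded there is nothing to check; otherwise fix $\epsilon=\pm1$ with $-\epsilon f$ bounded below and unbounded above, and note that for every $C$ with $C^2>\sup\epsilon f$, Corollary \ref{Corrolaire: caractériser géodésiques traversant tout un ruban} (together with the partial converse stated just after it) produces a geodesic of type $\epsilon$ that remains in $R_f$ with unbounded interval $I^+$ and Clairaut constant $C$. All these geodesics are complete, so Lemma \ref{Complétude dans un ruban} forces the (CR) condition on $R_f$, hence on every ribbon of $X$.

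For the converse --- (CR) on every ribbon $\Rightarrow$ $X$ complete --- I would first establish null completeness and then treat non-null geodesics by cases. Null completeness: by Remark \ref{Remarque sur la nature des zéros} an incomplete null orbit of $K$ corresponds exactly to a simple zero of $f$; by Proposition \ref{Proposition: correspondance entre surfaces maximales et variétés étalées}, condition 4(a), in a maximal surface such a zero occurs only at a $4$-order branching, and above such a branching the surface carries a saddle point (see the proofs of that proposition and of Proposition \ref{Topologie de la variété des feuilles, cas maximal et L-complet}), through which the null geodesic prolongs; since the branch set is finite and $\mathcal{G}(\mathcal{E}_X)$ is a tree, this chain of prolongations is finite and acyclic, hence it terminates along a null orbit that is not a simple zero of $f$, which is complete. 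For a non-null maximal geodesic $\gamma$ (treat $t\to t_\infty^+$): if $\gamma$ eventually lies in a band it is complete by Proposition \ref{Complétude dans une bande}; if it eventually remains in a maximal ribbon with $I^+$ bounded, then it eventually lies in a band (by the preliminary discussion of this section), hence is again complete by Proposition \ref{Complétude dans une bande}; and if it eventually remains in a maximal ribbon $R_f$ with $I^+$ unbounded, then $-\epsilon f$ is bounded below on the relevant half-line (the observation preceding Lemma \ref{Complétude dans un ruban}), so either $f$ is bounded --- and $\gamma$ is complete --- or Lemma \ref{Complétude dans un ruban} applied with the (CR) hypothesis for $R_f$ gives that $\gamma$ is complete.

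The remaining case, a geodesic $\gamma$ that leaves every maximal ribbon, is the one I expect to be the main obstacle. Such a $\gamma$ must cross a null orbit of $K$ infinitely often as $t\to t_\infty^+$; by Lemma \ref{Behavior of geodesics, paragraph completeness}(ii) a crossing at which the Jacobi field $\beta$ is non-zero would confine $\gamma$ to a single maximal ribbon, so every crossing must occur at a zero of $\beta$. If $\beta\equiv 0$ then $\gamma$ is a reparametrised orbit of $K$ along which $\langle K,K\rangle$ is constant, hence complete; otherwise the zeros of $\beta$ are isolated, and I would show --- arranging the transverse coordinate $\textbf{x}$ to increase along $\gamma$ in each chart, which is consistent across the generic reflections (where $\textbf{x}\mapsto-\textbf{x}$ and $K\mapsto-K$) --- that $\gamma$ cannot cross the same null orbit twice, so with finitely many branch points the itinerary of ribbons is finite and $\gamma$ is eventually trapped in one band, hence complete by Proposition \ref{Complétude dans une bande}. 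The delicate point is precisely this control of the ribbon itinerary from the finiteness of the branch set, the tree structure of $\mathcal{G}(\mathcal{E}_X)$ and Lemma \ref{Behavior of geodesics, paragraph completeness}, together with the sign bookkeeping through the generic reflections; everything else reduces to Lemma \ref{Complétude dans un ruban}, Proposition \ref{Complétude dans une bande}, Lemma \ref{asymptote à K} and the structural results of Section \ref{Section: Espace des feuilles}.
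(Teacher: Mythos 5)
Your two reductions for geodesics that eventually stay in a maximal ribbon (via Lemma \ref{Complétude dans un ruban} and Proposition \ref{Complétude dans une bande}), and your use of part 2 of Corollary \ref{Corrolaire: caractériser géodésiques traversant tout un ruban} to realize every admissible Clairaut constant in the ``only if'' direction, are fine. The genuine gap is in the last case, and the key claim there --- that a non-null geodesic cannot cross the same null orbit twice, so that the ribbon itinerary is finite and $\gamma$ ends up trapped in a band --- is false. Along any geodesic one has $\dot{\textbf{x}}^2=\beta^2=C^2-\epsilon f(\textbf{x})$, so $\textbf{x}\circ\gamma$ reverses direction at every simple zero of $C^2-\epsilon f$; this is exactly the content of Lemma \ref{Behavior of geodesics, paragraph completeness} (i) and (iii): a geodesic entering a type II band through $\mathfrak{l}$ must come back out through $\mathfrak{l}$, and a geodesic tangent to $K$ inside a band turns around and re-crosses the orbit it just crossed. (Note also that for $C\neq 0$ one has $\beta^2=C^2\neq 0$ on every null orbit, so crossings never occur at zeros of $\beta$; the zeros of $\beta$ sit in the interiors of the bands.) Concretely, take a saddle --- the basic small surface: the four quadrants carry alternating signs of $f$, and a spacelike geodesic with $0<C^2<\min(\sup_{Q_1}f,\sup_{Q_3}f)$ entering $Q_1$ from $Q_2$ turns around at $\{f=C^2\}$, re-crosses into $Q_2$, traverses all of $Q_2$ (no turning there since $\epsilon f<0$), enters $Q_3$, turns around, and so on forever. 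It crosses the two null orbits bounding $Q_2$ infinitely often and is never eventually contained in a single band nor in a single maximal ribbon, so your reduction of this case to Proposition \ref{Complétude dans une bande} collapses.

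The statement survives because these oscillating geodesics are automatically complete, but this requires the argument used for the pieces $\Delta_{x_i}$ in the proofs of Theorems \ref{Complétude à courbure bornée} and \ref{Complétude à dérivée bornée}, not a trapping argument. Between consecutive zeros $t_i<t_{i+1}$ of $\beta$, the curve $\textbf{x}\circ\gamma$ monotonically sweeps an interval $\Delta_{x_i}$ of $\mathcal{E}_X$ whose endpoints lie in $\{\epsilon f=C^2\}$; if $\gamma$ does not eventually stay in one band, infinitely many of these intervals contain a branch point. Since $B$ is finite and $f$ vanishes on $B$ while $\epsilon C^2\neq 0$, there is $\delta>0$ such that each such $\Delta_{x_i}$ contains a subinterval of length $2\delta$ centred at some $b\in B$ on which $C^2-\epsilon f$ is bounded above by some $M<\infty$ independent of $i$; hence $t_{i+1}-t_i=\int_{\Delta_{x_i}}\frac{dx}{\sqrt{C^2-\epsilon f}}\geq 2\delta/\sqrt{M}$ and $t^{+}_{\infty}=\infty$. (Orthogonal geodesics, $C=0$, are tangent to $K$ only on null orbits, pass through finitely many saddle points of a small surface, and then stay in a ribbon, so they are covered by the (CR) condition.) One smaller point: your null-completeness step only treats the null orbits of $K$; you should also note that the null geodesics transverse to $K$ are the curves $y=\mathrm{const}$ of a maximal ribbon, whose completeness amounts to each ribbon's $f$ being defined on all of $\R$ --- which is what the integrals over $\R^{+}$ in the (CR) condition implicitly provide.
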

\subsection{Completeness results with controlled geometry}
Let $(X,K)$ be a Lorentzian surface with a non-trivial Killing field $K$. Recall that the set of connected components of the interior of $\mathcal{E}_X \smallsetminus B$, where $B$ is the set of branch points of $\mathcal{E}_X$, is denoted by $\Sigma$. \\

The proofs of the results is this subsection are postponed to the end of the subsection.
\begin{definition}
Let $(X,K)$ be a Lorentzian surface with a non-trivial Killing field $K$. Let $F \in  C^{\infty}(\mathcal{E}_X,\R)$. The transverse derivative of $F$ at $p \in \mathcal{E}_X$ is the derivative of the function $f \in C^{\infty}(I,\R)$ at $\textbf{x}(p)$, where $F_{|\gamma}=f \circ \textbf{x}$ and $I=\textbf{x}(\gamma)$, and $\gamma$ is some distinguished geodesic through $p$.
\end{definition} 

\begin{theorem}\label{Complétude à courbure bornée}
	Let $(X,K)$ be a simply connected Lorentzian surface with Killing field $K$. Assume $X$ has bounded curvature. Then $X$ is complete if and only if it is null complete. 
\end{theorem}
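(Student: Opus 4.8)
The plan is to reduce completeness to null completeness using the structural results already in hand. One direction is trivial: completeness implies null completeness. For the converse, assume $X$ is null complete; then by the discussion preceding Proposition \ref{Topologie de la variété des feuilles, cas maximal et L-complet}, $X$ is maximal and has no saddle at infinity, so Proposition \ref{Topologie de la variété des feuilles, cas maximal et L-complet}(2) applies and the space of leaves $\mathcal{E}_X$ has the expected topological structure. By Proposition \ref{Complétude dans une bande} and Lemma \ref{Behavior of geodesics, paragraph completeness}, a non-null geodesic that eventually stays in a band is complete, and a null geodesic is complete by hypothesis; so the only geodesics whose completeness is in question are non-null geodesics that either (a) remain in a maximal ribbon $R_f$ with unbounded interval $I^+$, or (b) leave every maximal ribbon. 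For case (a), the relevant criterion is the (CR) condition of Lemma \ref{Complétude dans un ruban}, so the first key step is: \emph{bounded curvature on $X$ forces every maximal ribbon $R_f$ to satisfy (CR)}. Since the Gauss curvature of $2dxdy+f(x)dy^2$ is $-\tfrac12 f''(x)$, the hypothesis $|{\rm curvature}|\le \kappa$ translates to $|f''|\le 2\kappa$ on the $x$-coordinate of each ribbon. One then checks that a $C^\infty$ function with $|f''|\le 2\kappa$ on $\R^+$ satisfies (CR): either $-\epsilon f$ is bounded on a subset of $\R^+$ of infinite measure (first bullet of Lemma \ref{Complétude dans un ruban}), or $-\epsilon f \to +\infty$, and then the bound on $f''$ forces $-\epsilon f(x)$ to grow at most quadratically, i.e.\ $-\epsilon f(x)\le A + Bx^2$ eventually, whence $\int^\infty dx/\sqrt{-\epsilon f(x)} \ge \int^\infty dx/\sqrt{A+Bx^2} = \infty$, which is exactly \eqref{condition analytique complétude dans un ruban}. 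Hence every ribbon in $X$ satisfies (CR), and by Lemma \ref{Complétude dans un ruban} all geodesics of type (a) are complete.

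It remains to handle case (b): a non-null geodesic $\gamma$ that leaves every maximal ribbon. Here the idea is that $\gamma$ must cross infinitely many bands (equivalently, infinitely many null orbits of $K$) in finite parameter time $t_\infty^+$, and we must rule this out. The plan is to estimate from below the parameter time spent by $\gamma$ in each band it crosses, and show the total is infinite. By Lemma \ref{Behavior of geodesics, paragraph completeness}(i), $\gamma$ crosses no type II band, and by (iii) it crosses a band only when it is tangent to $K$ somewhere inside; using the first integral $C^2 - \beta^2 = \epsilon f$ along $\gamma$ (with $C$ the Clairaut constant, $\beta$ the Jacobi field) and the fact that $\beta$ vanishes where $\gamma$ is tangent to $K$, one gets $C^2 = \epsilon f$ at the tangency point, so $C^2 \le \sup_x \epsilon f$ over that band. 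The time to traverse a band of $x$-length $\ell$ with $C^2 = \sup \epsilon f =: m$ is $\int_0^\ell dx/\sqrt{m - \epsilon f(x)}$; near the endpoint where $\epsilon f \to m$ (a zero of $\epsilon f - m$ as well as a critical point, so at least a double zero by Remark \ref{Remarque sur la nature des zéros} considerations), the bound $|f''|\le 2\kappa$ gives $m - \epsilon f(x) \le \kappa\, d^2$ where $d$ is the distance to that endpoint, so the integrand is $\ge 1/(\sqrt{\kappa}\,d)$ and the integral diverges. Thus $\gamma$ spends infinite parameter time crossing even a single band in this configuration, contradicting $t_\infty^+ < \infty$; so no such $\gamma$ exists, and $X$ is complete.

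The main obstacle I expect is making the case-(b) argument fully rigorous, in particular the bookkeeping of tangency points versus zeros of $\epsilon f$ and the claim that $C^2 = \sup\epsilon f$ over a crossed band. One has to combine Lemma \ref{Behavior of geodesics, paragraph completeness} with Corollary \ref{Corrolaire: caractériser géodésiques traversant tout un ruban} carefully: a geodesic that crosses a whole band and enters the next ribbon has, in that ribbon, a Clairaut constant satisfying the constraints of Corollary \ref{Corrolaire: caractériser géodésiques traversant tout un ruban}(1), and one needs to track how $C$ (which is constant along $\gamma$) sits relative to $\sup \epsilon f$ on each successive ribbon — the subtlety being that the relevant supremum changes from ribbon to ribbon, and that $\gamma$ could in principle "speed up" by keeping $C^2$ strictly above $\sup\epsilon f$ on some ribbons. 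The resolution is that if $\gamma$ leaves \emph{every} maximal ribbon then in particular on each ribbon it crosses it must become tangent to $K$ (Lemma \ref{Behavior of geodesics, paragraph completeness}(iii)), pinning $C^2$ to a critical value of $\epsilon f$ there and triggering the quadratic-vanishing estimate above; the bounded-curvature hypothesis is exactly what converts that quadratic vanishing into a divergent time integral. A secondary point to verify is that "bounded curvature on $X$" really does give a \emph{uniform} bound $|f''|\le 2\kappa$ valid simultaneously on all ribbons — this is immediate since the curvature is an intrinsic function on $X$ and $-\tfrac12 f''$ is its expression in every chart $2dxdy+f(x)dy^2$.
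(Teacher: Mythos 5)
Your treatment of case (a) is essentially the paper's: bounded curvature gives a uniform bound on $f''$ in every chart, hence $g(x):=C^2-\epsilon f(x)$ grows at most quadratically on each ribbon, and $\int dx/\sqrt{1+x^2}=\infty$; the paper writes this estimate directly rather than routing it through the (CR) condition of Lemma \ref{Complétude dans un ruban}, but the computation is the same.

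Case (b) is where your argument has a genuine gap. You conclude that a non-null geodesic leaving every maximal ribbon cannot exist, by arguing that at the tangency point forced by Lemma \ref{Behavior of geodesics, paragraph completeness}(iii) one has $C^2=\sup\epsilon f$ on the band, that this supremum is attained at a critical point of $f$, hence that $g$ vanishes to second order there and the time integral $\int dx/\sqrt{g}$ already diverges in a single band. Two of these steps are unjustified and in general false. The tangency point is a zero of $\beta$, i.e.\ of $g=\beta^2$, and all this gives is $C^2=\epsilon f$ \emph{at that point}, i.e.\ $C^2\le\sup\epsilon f$; generically $C^2<\sup\epsilon f$, the zero of $g$ at the turning point is simple, $\int dx/\sqrt{g}$ \emph{converges} there, and the geodesic reaches the turning point in finite time, reverses its $x$-coordinate and genuinely leaves the ribbon. (When $g$ does have a double zero at the turning point, Lemma \ref{asymptote à K} says the geodesic never reaches it: it stays in the band and is complete by Proposition \ref{Complétude dans une bande}, so that configuration never produces a geodesic of type (b).) Geodesics of type (b) do exist --- for instance the periodic geodesics of universal extensions --- and the theorem requires showing they are \emph{complete}, not nonexistent. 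The missing idea is a uniform lower bound on the time per piece: cut $\gamma$ at consecutive zeros of $\beta$; on the corresponding $x$-interval $\Delta_{x_i}$ the function $g$ vanishes at \emph{both} endpoints, so the bound $|g''|\le 2N$ forces $|g|\le N|\Delta_{x_i}|^2$ on all of $\Delta_{x_i}$, whence the time to cross that piece is at least $|\Delta_{x_i}|/\sqrt{N|\Delta_{x_i}|^2}=1/\sqrt{N}$, independently of $i$. Since $\gamma$ crosses infinitely many such pieces, its affine parameter is unbounded and $\gamma$ is complete. This is exactly where the bounded-curvature hypothesis is used in case (b), and it is the step your proposal replaces with an estimate that only holds in the degenerate (double-zero) situation.
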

\begin{corollary}\label{Complétude à courbure bornée, corollaire}	
	A Lorentzian surface with bounded sectional curvature whose group of isometries is of dimension $1$ is complete if and only if it is null complete.
\end{corollary}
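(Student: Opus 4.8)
The plan is to deduce this from Theorem~\ref{Complétude à courbure bornée} by passing to the universal cover. Let $(X,g)$ be a Lorentzian surface with bounded curvature whose isometry group has dimension $1$. First I would extract a complete non-trivial Killing field: the identity component $\mathrm{Isom}_0(X,g)$ is a connected $1$-dimensional Lie group, hence isomorphic to $\R$ or to $S^1$, and in either case it is the image of a non-trivial one-parameter group $t \mapsto \phi_t$ of isometries. Its infinitesimal generator $K$ is then a Killing field whose flow is $\phi_t$, defined for all $t \in \R$, so $K$ is complete; and $K$ is non-trivial since $\mathrm{Isom}_0(X,g)$ is not reduced to the identity.

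Next I would lift to the universal cover $\pi : \widetilde X \to X$, equipped with the Lorentzian metric $\widetilde g = \pi^{*} g$. The one-parameter group $(\phi_t)$ lifts to a one-parameter group $(\widetilde\phi_t)$ of isometries of $(\widetilde X, \widetilde g)$ (lift each $\phi_t$ to a diffeomorphism of $\widetilde X$ depending continuously on $t$, with $\widetilde\phi_0 = \mathrm{id}$), whose generator $\widetilde K$ is a complete, non-trivial Killing field on the simply connected surface $\widetilde X$. Since $\pi$ is a local isometry, the curvature of $\widetilde g$ at a point equals that of $g$ at its image, so $(\widetilde X, \widetilde g)$ again has bounded curvature. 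Thus $(\widetilde X, \widetilde K)$ fulfils the hypotheses of Theorem~\ref{Complétude à courbure bornée}, and $\widetilde X$ is geodesically complete if and only if it is null complete.

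It then remains to transfer completeness through the covering. Because $\pi$ is a local isometry, a curve in $X$ is a (null) geodesic if and only if each of its lifts is, and a geodesic of $X$ and any of its lifts share the same maximal interval of definition (a lift that extended would project to an extension of the original geodesic, and conversely a geodesic of $\widetilde X$ that extended would have an extended projection). Hence $X$ is complete, respectively null complete, exactly when $\widetilde X$ is, and the corollary follows from the previous paragraph. The only step requiring a little care is the lifting of the one-parameter isometry group and the resulting completeness of $\widetilde K$; all the remaining ingredients are standard invariance properties of pseudo-Riemannian coverings, so this statement is essentially a soft reduction to the simply connected case.
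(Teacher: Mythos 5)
Your argument is correct and is exactly the reduction the paper has in mind: the corollary is stated without proof as an immediate consequence of Theorem \ref{Complétude à courbure bornée}, obtained by extracting the complete Killing field generated by the one-parameter isometry group, lifting everything to the universal cover (where curvature bounds and the completeness of $K$ are preserved), and using that geodesics and their lifts share the same maximal domain. No gaps.
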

Let $(\sigma_i)_{i \in I}$ be a sequence of pairewise distinct elements of $\Sigma$; such a sequence is called "normal" if \\
%- $I$ est infini,\\
- the $\sigma_i$'s are contained in branchings of  $\mathcal{E}_X$ of order  $4$,\\
- for all $i \in I$, $\sigma_i$ and $\sigma_{i+1}$ belong to the same $4$ order branching, and are not adjacent in it, they are then opposite to each other, \\
- $(\sigma_i)_{i \in I}$ is maximal.  

Every non-null maximal geodesic orthogonal to $K$ defines a normal sequence in $\Sigma$; and conversely, to each such a sequence, one can associate a unique (up to the action of the flow) non-null maximal geodesic orthogonal to $K$.  
\begin{theorem}\label{Complétude à dérivée bornée}
	Let $(X,K)$ be a simply connected Lorentzian surface with Killing field $K$. Assume that $F$ has bounded transverse derivative. If $X$ is null complete, then geodesics not orthogonal to $K$ are complete. Therefore $X$ is complete if and only if it is null complete and the geodesics orthogonal to the Killing field are complete. The last condition reads: for any normal sequence  $(\sigma_i)_{i \in I}$ of $\Sigma$, we have 
	\begin{align}\label{Complétude à dérivée bornée, condition sur les géod orthogonales}
	\sum_{i} \int_{\sigma_{i}} \frac{dx}{\sqrt{| f_i(x)|}} = \infty.
	\end{align}
\end{theorem}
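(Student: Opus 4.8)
The plan is to prove both equivalences at once, reducing (as in the discussion after Proposition~\ref{Complétude dans une bande}) to a case analysis on a non-null maximal geodesic $\gamma$. The implication ``complete $\Rightarrow$ null complete and orthogonal geodesics complete'' is immediate, so I will assume $X$ is null complete (hence maximal, with no saddle at infinity) and that every geodesic orthogonal to $K$ is complete; null geodesics are then complete by hypothesis. Fix a non-null maximal $\gamma$ with Clairaut constant $C$, so that $\beta^{2}=C^{2}-\epsilon f$ along $\gamma$, handle the two ends of $\gamma$ symmetrically (replace $\gamma(t)$ by $\gamma(-t)$), and use Proposition~\ref{Complétude dans une bande} to discard the case where $\gamma$ eventually lies in a band; two cases remain: \textbf{(a)} $\gamma$ stays in a maximal ribbon $R_{f}$ as $t\to t_{\infty}^{+}$ with $I^{+}$ unbounded, and \textbf{(b)} $\gamma$ leaves every maximal ribbon. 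Throughout, write $L=\sup|f'|<\infty$.

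In Case (a), $-\epsilon f$ is bounded below on $I^{+}$ (as observed before Lemma~\ref{Complétude dans un ruban}); if it is also bounded above, $\gamma$ is complete by the last line of that lemma. If $-\epsilon f$ is unbounded above, I will check that the bound $L$ forces the (CR) condition: either $\mu(\{-\epsilon f\le M\}\cap\R^{+})=\infty$ for some $M$, or $\mu(\{-\epsilon f\le M\}\cap\R^{+})<\infty$ for every $M$, in which case $|(-\epsilon f)'|\le L$ yields $-\epsilon f(x)\to+\infty$ as $x\to+\infty$ (a point where $-\epsilon f\le M$ forces $-\epsilon f\le M+1$ on an interval of length $1/L$), hence $-\epsilon f(x)\le Lx+\mathrm{const}$ for large $x$ and $\int^{+\infty}dx/\sqrt{-\epsilon f}$ diverges (cf.\ \eqref{condition analytique complétude dans un ruban}); either way $\gamma$ is complete by Lemma~\ref{Complétude dans un ruban}. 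This is the only place the bound on $f'$ enters for geodesics that stay in a ribbon.

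In Case (b) I will split on $C$. If $C\ne 0$, then $\gamma$ avoids the zeros of $K$, so every null orbit it crosses separates two bands of a single maximal ribbon, and by Lemma~\ref{Behavior of geodesics, paragraph completeness}(ii) $\gamma$ stays in that ribbon as long as $\beta\ne 0$; hence, to leave every maximal ribbon, $\gamma$ must turn around ($\beta=0$, i.e.\ $\epsilon f=C^{2}$) infinitely often (otherwise $\gamma$ would eventually be $x$-monotone and, by the same lemma, trapped in one maximal ribbon, i.e.\ in Case (a)). The quantitative input is: a turning point lies at $x$-distance $\ge C^{2}/L$ from the boundary null orbit through which $\gamma$ entered the band containing it, since there $\epsilon f$ rises from $0$ to $C^{2}$ with slope $\le L$ in modulus; so just after entering such a band $\gamma$ traverses an $x$-interval of length $C^{2}/(2L)$ on which $|\epsilon f|\le C^{2}/2$, whence $\sqrt{C^{2}-\epsilon f}\le\sqrt{3/2}\,|C|$ and this stretch costs affine time at least $\tau_{0}:=\bigl(C^{2}/(2L)\bigr)/\bigl(\sqrt{3/2}\,|C|\bigr)>0$, a constant independent of the band. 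Since these stretches occur at pairwise disjoint times, having infinitely many of them forces $t_{\infty}^{+}=+\infty$, so $\gamma$ is complete. If $C=0$, then $\gamma$ is orthogonal to $K$ and complete by hypothesis; this proves the first equivalence.

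Finally, to obtain the reformulation, I will analyse the orthogonal ($C=0$) geodesics directly. Such a $\gamma$ cannot cross a null orbit of $K$ (there $K$ is null, and $\langle\gamma',K\rangle=0$ would force $\gamma'$ null), so if it meets no zero of $K$ it stays in a single maximal ribbon and is complete by Case (a) or Proposition~\ref{Complétude dans une bande}. Otherwise it reaches a saddle point --- in finite affine time, since the bounding null orbit is there a simple zero (Remark~\ref{Remarque sur la nature des zéros}, Proposition~\ref{Topologie de la variété des feuilles, cas maximal et L-complet}(2)) so the relevant integral converges --- and, being transverse to the null separatrices, passes instantly through the saddle from a square $\sigma_{i}$ into the square $\sigma_{i+1}$ opposite to $\sigma_{i}$ in the surrounding $4$-order branching. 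Iterating, $\gamma$ follows a normal sequence $(\sigma_{i})_{i\in I}$ of $\Sigma$ (never a cycle, since the only cycles of simple branch points have order $4$), spending affine time $\int_{\sigma_{i}}dx/\sqrt{|f_{i}(x)|}$ in each $\sigma_{i}$; these integrals converge at saddle ends and diverge at any non-simple-zero end terminating the sequence. Hence $\gamma$ is complete iff $\sum_{i}\int_{\sigma_{i}}dx/\sqrt{|f_{i}(x)|}$ diverges, which turns ``all orthogonal geodesics complete'' into \eqref{Complétude à dérivée bornée, condition sur les géod orthogonales} along every normal sequence. I expect the main obstacle to be Case (b) with $C\ne 0$ --- excluding a geodesic that escapes to infinity in finite affine time by oscillating across ever thinner bands --- which is precisely where $\sup|f'|<\infty$, rather than a curvature bound, is needed, through the uniform lower bound $\tau_{0}$; a secondary difficulty is the bookkeeping of the last paragraph: identifying the $C=0$ geodesics that reach a zero of $K$ with normal sequences and checking that crossing a saddle adds no affine time.
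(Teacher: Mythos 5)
Your proposal is correct and follows the paper's overall decomposition (geodesics eventually trapped in a band or a maximal ribbon; geodesics leaving every maximal ribbon; reduction of the latter to $C=0$; identification of the $C=0$ case with normal sequences), but the central step is argued by a genuinely different route. To show that a geodesic with $C\neq 0$ leaving every maximal ribbon is complete, the paper argues indirectly: from $t_i\geq\sqrt{|\Delta_{x_i}|}/\sqrt{N}$ it deduces that incompleteness forces $\sum_i\sqrt{|\Delta_{x_i}|}<\infty$, then uses the resulting uniform bound on $(f_i\circ x)'$ together with sequences of zeros of $f_i$ and of $\beta$ accumulating at the finite limit time $d$ to conclude that $f\circ x(d_i)$ tends simultaneously to $0$ and to $C^{2}$, whence $C=0$. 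You instead extract a uniform lower bound $\tau_{0}=\tau_{0}(C,L)>0$ on the affine time spent on the $x$-stretch of length $C^{2}/(2L)$ adjacent to the null orbit through which the geodesic enters the band containing each turning point; since these stretches are disjoint in time and infinite in number, $C\neq 0$ directly forces $t_{\infty}^{+}=+\infty$. This is more elementary and quantitative, avoids the limiting argument at $t=d$, and makes explicit exactly where $\sup|f'|<\infty$ enters; the paper's version reaches the same dichotomy by contradiction. The remaining parts coincide in substance: your detour through the (CR) condition of Lemma \ref{Complétude dans un ruban} for geodesics trapped in a ribbon is equivalent to (if slightly more laborious than) the paper's direct linear-growth estimate $|C^{2}-\epsilon f(x)|\leq A'(1+|x|)$, and your analysis of $C=0$ geodesics (finite crossing time at simple zeros, passage into the opposite square of the $4$-order branching, hence condition (\ref{Complétude à dérivée bornée, condition sur les géod orthogonales}) along normal sequences) supplies details that the paper only sketches.
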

In Example \ref{Exemple dérivée bornée 1er}, we obtain a null complete saddle with bounded transverse derivative of $F$, and incomplete orthogonal geodesics joining two points. This shows that the condition (\ref{Complétude à dérivée bornée, condition sur les géod orthogonales}) in Theorem \ref{Complétude à dérivée bornée} above is not empty. Moreover, Example \ref{exemple selle incomplète et orthogonal complète} of an incomplete saddle with complete null geodesics and complete orthogonal geodesics shows that the condition on $F$ is not superfluous.
\begin{corollary}\label{Complétude à dérivée bornée et espace des feuilles complet}
Let $(X,K)$ be a simply connected null complete Lorentzian surface with Killing field $K$. Assume that $F$ has bounded transverse derivative and that $\mathcal{E}_X$ is complete (for the pseudometric topology), then $X$ is complete.  
\end{corollary}
\begin{remark}
Actually, the condition on the space of leaves in Corollary \ref{Complétude à dérivée bornée et espace des feuilles complet} can be weakened into:  $\sum_i |\sigma_i| = + \infty$ for every normal sequence $(\sigma_i)_i$ in $\Sigma$.
\end{remark}

We can now prove theorems \ref{Complétude à courbure bornée} and \ref{Complétude à dérivée bornée}.

%\textbf{Proof of Theorem \ref{Complétude à courbure bornée}}
\begin{proof}[\textbf{Proof of Theorem \ref{Complétude à courbure bornée}}]
	Assume that $ X $ is null complete and has bounded curvature, say by a constant $N>0$; we show that $X$ is complete. We first prove that geodesics that remain in a maximal ribbon are complete. Fix a maximal ribbon $R_f$ and a geodesic $\gamma$ of type $\epsilon$ lying in the ribbon with unbounded $I^{+}$. The null completeness of the surface implies that $f$ is defined over $\R$. Set $g(x):=C^2-\epsilon f(x)$; the hypothesis on curvature implies that $g^{''}$ is bounded, hence for all $x \in I^{+}$,
	\begin{align}\label{Courbure bornée: inégalité 1 sur f, ici seulement}
	|g(x)| \leq A(1+x^2),
	\end{align}
	where $A >0$ is a constant that depends on the constant $N$ and on the ribbon. This yields $$\int_{I^+} \frac{dx}{\sqrt{g(x)}} \geq \frac{1}{\sqrt{A}}\int_{I^+} \frac{dx}{\sqrt{1+x^2}} = \infty,$$ hence $\gamma$ is complete. 
%		\begin{align}\label{local, utilisation de la courbure/complétude}
%	|f(x)| \leq N \cdot \Delta_{x}^2
%	\end{align}  
	
%	Applying Lemma \ref{Complétude dans un ruban}, if there exists $M >0$ such that $\mu(A) = \infty$, where $A:=\{-\epsilon f \leq M\} \cap \R^{+}$, then $\gamma$ is complete. Otherwise, fix $M>0$ such that $\mu(A) < \infty$, and denote by $A^c$ the complement of $A$ in $\R^{+}$, 	which is then necessarily of infinite measurement. Using the inequality (\ref{local, utilisation de la courbure/complétude}) we write $\int_{A^{c}} \frac{dx}{\sqrt{-\epsilon f(x)}} \geq \frac{\mu(A^{c})}{\sqrt{N}} $, which shows using Lemma \ref{Complétude dans un ruban} that $\gamma$ is complete.\\
	
	Assume now that $\gamma$ leaves any maximal ribbon. Denote by $\Delta_{x_i}$ the piece of $\mathcal{E}_X$ delimited by two consecutive zeros of $\beta$, on which $\gamma$ is thus contained in a maximal ribbon $R_{f_i}$, and denote by $|\Delta_{x_i}|$ its length.  We apply the argument on curvature again to get
	\begin{align}\label{Courbure bornée: inégalité 2 sur f, ici seulement}
	|g(x)| \leq N \cdot |\Delta_{x_i}|^2, \;\forall x \in \Delta_{x_i},
	\end{align}
	where $N>0$ is the constant defined above. Therefore, the time $\gamma$ takes to cross every piece is bounded below by a constant $1/\sqrt{N}$ independent of the given piece. Since $\gamma$ croses infinitely many such pieces of $\mathcal{E}_X$, the completeness follows.     
\end{proof}

%\textbf{Proof of Theorem \ref{Complétude à dérivée bornée}}
\begin{proof}[\textbf{Proof of Theorem \ref{Complétude à dérivée bornée}}]
	Here we suppose that the transverse derivative of $F$ is bounded by a constant $N>0$. 
	Let $\gamma$ be a geodesic contained in a ribbon $R_f$ after a certain while. We have
	\begin{align}
	|C^2 - \epsilon f(x)| \leq A'(1+|x|),
	\end{align}
	where $A'$ is a constant that depends on the constant $N$ and on the ribbon. Hence 
\begin{align*}
	\int_{I^{+}} \frac{dx}{\sqrt{C^2- \epsilon f(x)}} \geq \frac{1}{\sqrt{A'}} \int_{I^{+}} \frac{dx}{\sqrt{1+|x|}} = \infty,
\end{align*}
 which proves that $\gamma$ is complete. 
	
	Assume now that $\gamma$ leaves any maximal ribbon. Denote by $\Delta_{x_i}$ the piece of $\mathcal{E}_X$ delimited by two consecutive zeros of $\beta$, on which $\gamma$ delimits a ribbon $R_{f_i}$, with $f_i$ defined over an interval $J_i$ of length $|\Delta_{x_i}|$, and $t_i$ the time $\gamma$ takes to cross every such piece. On each ribbon $R_{f_i}$, the hypothesis on the derivative of $f$ gives $|C^2 - \epsilon f_i(x)| \leq N \cdot |\Delta_{x_i}|$; this yields in each ribbon
	\begin{align}\label{Minorer le temps, ici seulement}
	t_i = \int_{\Delta_{x_i}} \frac{dx}{\sqrt{C^2- \epsilon f(x)}} \geq \frac{\sqrt{|\Delta_{x_i}|}}{\sqrt{N}} .
	\end{align}
Suppose that $\gamma$ is incomplete; it follows from (\ref{Minorer le temps, ici seulement}) that the series 	$\displaystyle \sum_{i} \sqrt{|\Delta_{x_i}|}$ converges. 
%(in particular, the sum of the lengths of the pieces of $ \mathcal{E}_X$ the geodesic $\gamma$ goes through, i.e. $\displaystyle \sum_{i} |\Delta_{x_i}|$ converges too). 
In what follows, we show that $\gamma$ is necessarily a geodesic orthogonal to $K$. Set $d := \displaystyle \sum_{i} t_i < \infty$. Define $\hat{f}_i(t) := f_i$ o $x(t)$ and $\hat{g}_i(t) := C^2 - \epsilon f_i$~o~$x~(t)$, $t \in J_i$. We have $|\hat{f}_i^{'}(t)| = |f_i^{'}(x) \cdot x^{'}(t)| = |f_i^{'}(x)| \sqrt{C^2 -\epsilon f_i(x(t))}$. By assumption, $f^{'}$ is bounded as well as $\displaystyle \sum_{i} \sqrt{|\Delta_{x_i}|}$, this implies that $\hat{f}_i^{'}$ is bounded on each $I_i$ by a constant $N'>0$ independent  of $i$. Now let $(d_i)_i$ be a sequence converging to $d$, such that for all $i$, $d_i \in J_i$. Let $(y_i)_i$ be a sequence of zeros of $\hat{f}_i$ converging to $d$, such that for all $i$, $y_i \in J_i$, and let $(z_i)_i$ be a sequence of zeros of $\beta$ (hence of $\hat{g}_i$) such that for all $i$, $z_i \in J_i$. In one hand $|\hat{f}_i(d_i) - \hat{f}_i(y_i)|~\leq~N'~(d_i - y_i)$,  in the other hand $|\hat{g}_i(d_i)~-~\hat{g}_i(z_i)|~\leq~N'~(d_i - z_i)$. Since $(d_i - y_i)_i$ and $(d_i - z_i)_i$ tend to $0$,  the sequence $(f_i$ o $x(d_i))$ tends to $0$ and $C^2$, which forces $C=0$. 

It follows that $X$ is complete if and only if the geodesics orthogonal to $K$ are complete. A geodesic orthogonal to $K$ that leaves every maximal ribbon leaves every ribbon through a saddle point, so that the pieces $\Delta_{x_i}$ it crosses define what we called in Theorem \ref{Complétude à dérivée bornée} a normal sequence  of elements of $\Sigma$, and the completeness reads as in (\ref{Complétude à dérivée bornée, condition sur les géod orthogonales}). 

\end{proof}

%\textbf{Proof of Corollary \ref{Complétude des extensions universelles des tores}}
\begin{proof}[\textbf{Proof of Corollary \ref{Complétude à courbure bornée, corollaire}}]
	This corollary is an immediate consequence of Theorem \ref{Complétude à courbure bornée}. 
\end{proof}
%\begin{remark}
%	In the case of universal extensions, the completeness of some geodesics follows more easily: \\
%	1) For those remaining in a maximal ribbon with unbounded $I^{+}$, the geodesic completeness follows from the last conclusion of Lemma \ref{Complétude dans un ruban}, i.e. the boundedness of the norm of the Killing field. \\
%	2) If $ \gamma $ is a geodesic that leaves any maximal ribbon, then $ \gamma $ is invariant by an isometry of the universal extension,  acting on $\gamma$ by a translation of the geodesic parameter (see \cite[Lemma 4.8]{LM});  this implies that $\gamma$ is complete. It is called an "invariant" or "periodic geodesic" in \cite{LM}.  
%\end{remark}
%\textbf{Proof of Corollary \ref{Complétude à dérivée bornée et espace des feuilles complet}}
\begin{proof}[\textbf{Proof of Corollary \ref{Complétude à dérivée bornée et espace des feuilles complet}}]
Using Theorem \ref{Complétude à dérivée bornée}, all we need to check is the completeness of orthogonal geodesics. Every non-null geodesic orthogonal to $K$ defines a normal sequence $(\sigma_i)_i$ in $\Sigma$. It appears from the proof of Theorem \ref{Complétude à dérivée bornée} that if  $\sum_i |\sigma_i| = + \infty$ (which is ensured here by the completeness of $\mathcal{E}_X$), then the series defined in (\ref{Complétude à dérivée bornée, condition sur les géod orthogonales}) diverges, so that the geodesic is complete. 
\end{proof}

%\textbf{Proof of Theorem \ref{Complétude à f bornée et espace des feuilles complet}}
%\begin{proof}[\textbf{Proof of Theorem \ref{Complétude à f bornée et espace des feuilles complet}}]
%This theorem is a consequence of Theorem \ref{Complétude à f bornée et espace des feuilles complet, général} of the next paragraph. 
%\end{proof}	

%Geodesics remaining in a maximal ribbon after a certain while are complete by Lemma \ref{Complétude dans un ruban}. Let $\gamma$ be a geodesic that leaves any maximal ribbon. Let $(s_i)_i$ be the increasing sequence of vertices in $p(\gamma)$, the projection of $\gamma$ on $\mathcal{G}(\mathcal{E}_X)$. Either $(s_i)_i$ is stationary, or it doesn't have any stationary subsequence. If  $(s_i)_i$  is stationary, then $\gamma$ defines an infinite cycle in $\mathcal{E}_X$, and this is excluded by the hypotheses of the theorem. Denote by $(\Delta_i)_i$ the sequence of elements of $\Sigma$ crossed by $\gamma$. If $ \sum_i |\Delta_i| < + \infty$, then the sequence $(s_i)_i$ defines a non convergent Cauchy sequence in $\mathcal{G}(\mathcal{E}_X)$. Indeed, if this sequence converges to an element $s \in \mathcal{G}(E_X)$, it's not hard to see that $p(\gamma)$ can be extended beyond $s$, which is excluded since $\gamma$ leaves any compact subset of $X$, and is therefore maximal.
	%\newpage
\subsection{Completeness results with topological conditions on the space of leaves}
Null completeness is equivalent to the completeness of the distinguished geodesics of the space of leaves regarded as a Riemannian manifold. With sufficiently general conditions on this space, we define a large class of surfaces where geodesic completeness is characterized. It will appear in particular that the non-compact case contains many examples of complete surfaces. In order to write these conditions, we introduce in the following some definitions and vocabulary on the space of leaves.

%Let $E$ be a topological space. For every compact subset $K$ of $E$, denote by $\pi_0^{'}(E \setminus K)$ the set of connected components of $E \setminus K$ which are not relatively compact. If $K$ and $K'$ are two compact subsets of $E$, with $K \subset K'$, then denote by $i_{K,K'}: \pi_0^{'}(E \setminus K') \to \pi_0^{'}(E \setminus K)$ the application that associates to a connected component of $E \setminus K'$ that is not relatively compact the unique connected component of $E \setminus K$ which contains it (it is not relatively compact). It is immediate that $$((\pi_0^{'}(E \setminus K))_K, (i_{K,K'})_{K,K'})$$ is an inverse system.  Consider the direct system $ \{K_i\}$ of compact subsets of $\mathcal{E}$ and inclusion maps. The set of ends of $\mathcal{E}$ is defined to be the inverse limit of the corresponding inverse system $\varprojlim \pi_0^{'}( \mathcal{E} \setminus K ) $. 

Let $\mathcal{E}$ be a $1$-dimensional Riemannian manifold, with topology $\mathfrak{T}$. The first following definition (due to Spivak \cite{MR532830}, p. 23) applies to any locally compact topological space, and the second one to any Riemannian manifold. 
\begin{definition}\textbf{(Topological end).}
A topological end of $\mathcal{E}$ is a map $e$  which associates to each compact (not necessarily Hausdorff) subset ${\displaystyle K\subset  \mathcal{E}}$ a connected component of ${\displaystyle X\smallsetminus K}$ in such a way that $ \forall K_{1}\subset K_{2}:e (K_{2})\subset e(K_{1})$.
\end{definition}
\begin{definition}\textbf{(Geometric end).}
A geometric end of $\mathcal{E}$ is an end defined by a geodesic ray (a smooth maximal semi-geodesic) of $\mathcal{E}$, i.e. if $e$ is the application that defines this end, then $ e(K) $ contains the end of the geodesic ray for any compact subset $K$ of $\mathcal{E}$.
\end{definition}
\begin{figure}[h!] 
	\labellist 
	\small\hair 2pt 
	\endlabellist 
	\centering 
	\includegraphics[scale=0.14]{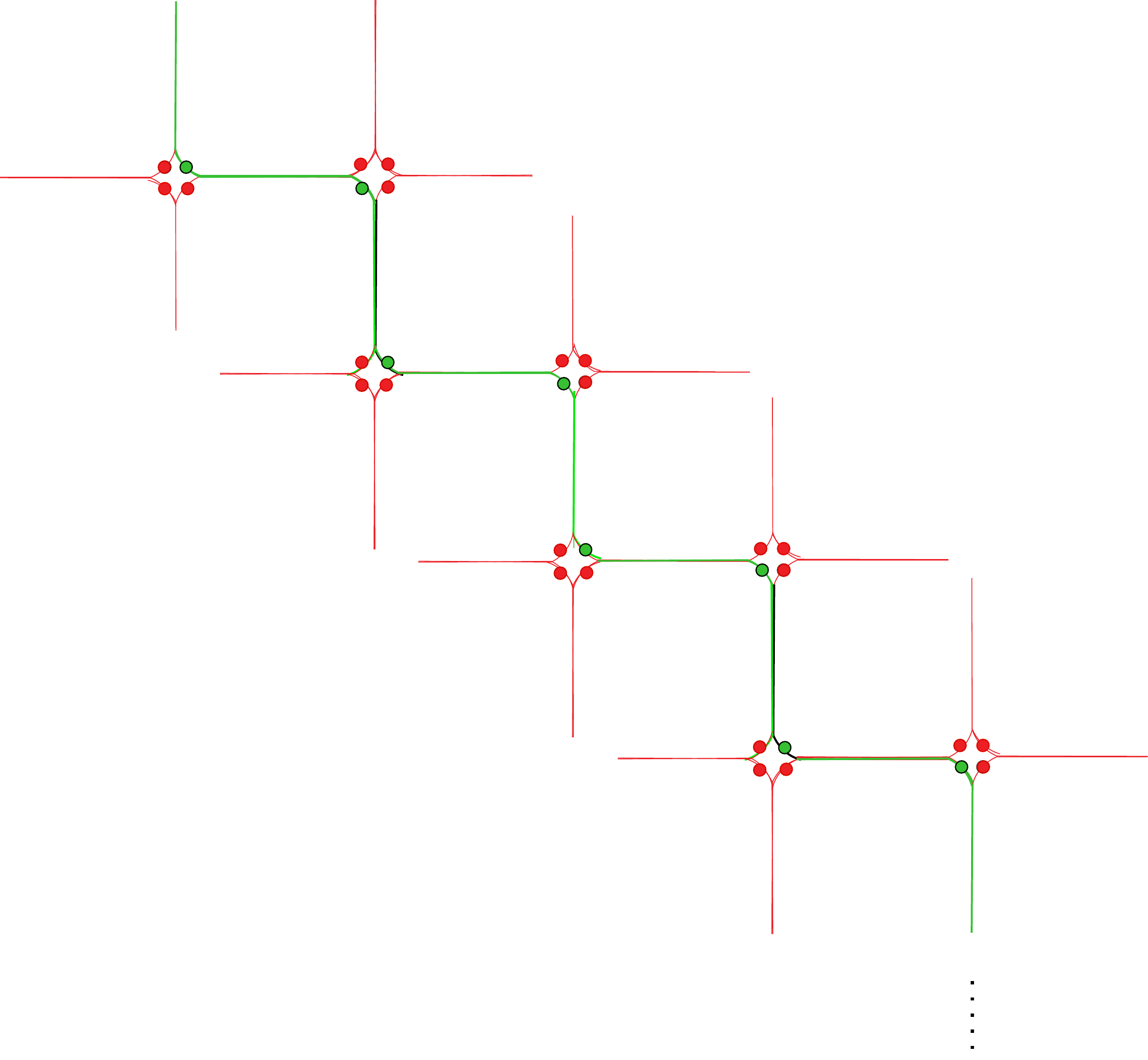} \caption{Example ($\mathcal{G}$): All ends of ($\mathcal{G}$) are geometric ends}
\end{figure} 

%An end corresponding to a geodesic ray entirely contained in the interior of $\mathcal{E} \smallsetminus B$ will be called "\textbf{a simple end}". 
%\begin{definition}\textbf{(Canonical end).}
%Given a linking structure $\mathcal{A}$ on $\mathcal{E}$, a canonical end of $\mathcal{E}$ relative to $\mathcal{A}$ is a geometric end defined by a canonical geodesic ray in $\mathcal{E}$.  
%\end{definition}
%For a linking structure $\mathcal{A}_0$ on ($\mathcal{G}$) such that the geodesic in green is a canonical geodesic, all ends of ($\mathcal{G}$) are canonical ends. For a linking structure $\mathcal{A}$ such that $\mathcal{A}(\sigma_{i_0}) = \mathcal{A}_0(\sigma_{i_0})$, $\sigma_{i_0} \in \Sigma$, and   $\mathcal{A}(\sigma_{i}) \neq \mathcal{A}_0(\sigma_{i})$ for all $i > i_0$, ($\mathcal{G}$) contains a non-canonical end.
\paragraph{Definition of ends in terms of piecewise maximal semi-geodesics}

\begin{definition}\label{Définition des bouts avec les géodésiques brisées}
Let $\mathcal{E}$ be a $1$-dimensional manifold, with topology $\mathfrak{T}$. A \textbf{ray} in $\mathcal{E}$ is a (continuous) piecewise smooth maximal semi-geodesic in $\mathcal{E}$ in which any point of $B$ appears at most once.\\
% moreover, a ray can only have discontinuities  of the following nature: if a ray $r$ is not continuous at a point $t_0$, then $r(t_0)=b_0$ is a branch point and $\displaystyle \lim_{t \to t_0^-} r(t),  \displaystyle \lim_{t \to t_0^+} r(t) \in \{b_0,b_1\} $, such that $\delta(b_0,b_1)=0$.\\
Two rays are equivalent if their intersection is not contained in any compact subset of $\mathcal{E}$. 

%$p(r_0)$ and $p(r_1)$ have the same vertices in the real tree $ \mathcal{G}(\mathcal{E})$, outside a compact set, where $p: \mathcal{E} \rightarrow \mathcal{G}(\mathcal{E})$ is the natural projection.  

%Two rays $r_0$ and $r_1$ are equivalent if there is a ray $r_2$ such that for any compact subset $V$ of $\mathcal{E}$, $r_2$ contains infinitely many of the branch points in each of $r_0$ and $r_1$ outside of $V$. %where $p: \mathcal{E} \rightarrow \mathcal{G}(\mathcal{E})$ is the natural projection.  
\end{definition}
In the sequel, a singular point of a ray in $\mathcal{E}$ will be called a \textbf{cusp}. 
% Cette remarque est importante:
\begin{remark}
Since a compact subset of $\mathcal{E}$ contains finitely many $\mathcal{Y}$-pieces, any ray of $\mathcal{E}$ has finitely many cusps in a compact subset; therefore, a ray leaves any compact subset of  $\mathcal{E}$, and defines an end of this space: to each compact subset $K$ of $\mathcal{E}$, $e(K)$ is the connected component of $\mathcal{E} \smallsetminus K$ containing the end of the ray. 
\end{remark} 
\begin{fact}\label{Observation sur la nature des bouts --> chaines}	
	Let $\mathcal{E}$ be a $1$-dimensional Riemannian manifold with topology $\mathfrak{T}$.
	%, and denote by $p: \mathcal{E} \rightarrow \mathcal{G}(\mathcal{E})$ the natural projection. Since $\mathcal{G}(\mathcal{E})$ is a tree, two rays $r_0$ and $r_1$ are equivalent if and only if $p(r_0)$ and $p(r_1)$ have the same vertices outside a compact set. Otherwise, the intersection $p(r_0) \cap p(r_1)$ is contained in a compact set. 
	Since $\mathcal{G}(\mathcal{E})$ is a tree, if a point is chosen in $\mathcal{E}$, then each end of $\mathcal{E}$ contains a unique ray (up to equivalence) starting from this point; therefore, the ends may be placed in one to one correspondance with these rays. %A ray corresponding to a geometric end will be referred to as a \textbf{geometric ray}. 
\end{fact}
\begin{figure}[h!]
	\labellist 
	\small\hair 2pt 
	\endlabellist 
	\centering 
	\includegraphics[scale=0.08]{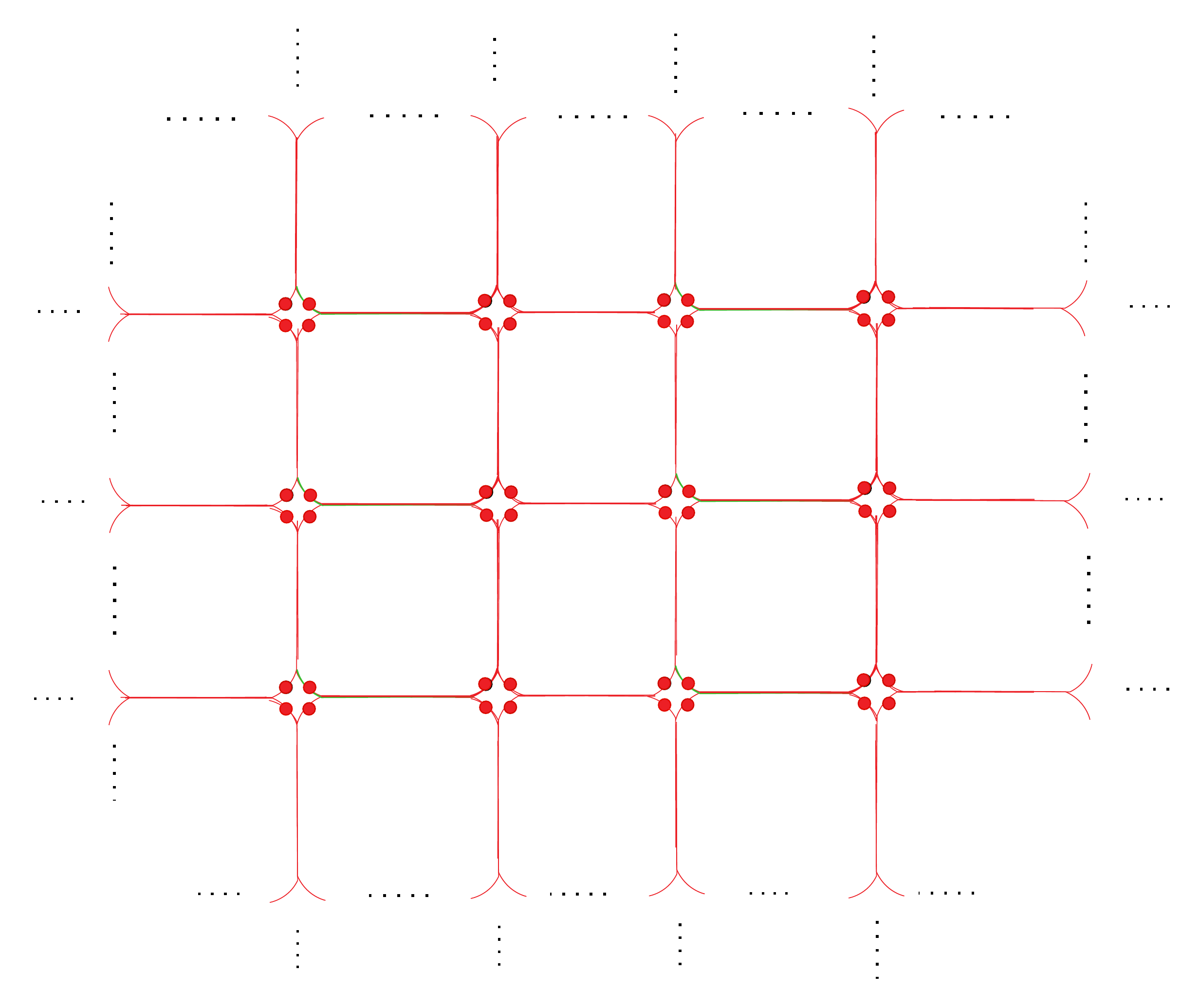} \caption{Infinite grid of $4$-order branchings: it has only one end, which is a geometric end defined by infinitely many non-equivalent rays}
\end{figure}

In the figure above, we see that the Fact \ref{Observation sur la nature des bouts --> chaines}  does not hold. Here, $\mathcal{G}(\mathcal{E})$ is not a tree.
\begin{definition}
Let $\mathcal{E}$ be a $1$-dimensional manifold, with topology $\mathfrak{T}$.\\
1) A \textbf{chain} in $\mathcal{E}$ is a semi-maximal path of $4$-order branchings of $\mathcal{E}$.\\ 
2) A \textbf{normal chain} is a maximal path of $4$-order branchings, such that the elements of $\Sigma$ interior to the path define a normal sequence of elements of $\Sigma$. \\
We denote by $N_{\infty}$ a semi-finite normal chain of $\mathcal{E}$, and by $N_{2}$ a normal chain of cardinal number$~2$.
\end{definition}

	\begin{figure}[h!] 
		\labellist 
		\small\hair 2pt 
		\endlabellist 
		\centering 
		\includegraphics[scale=0.175]{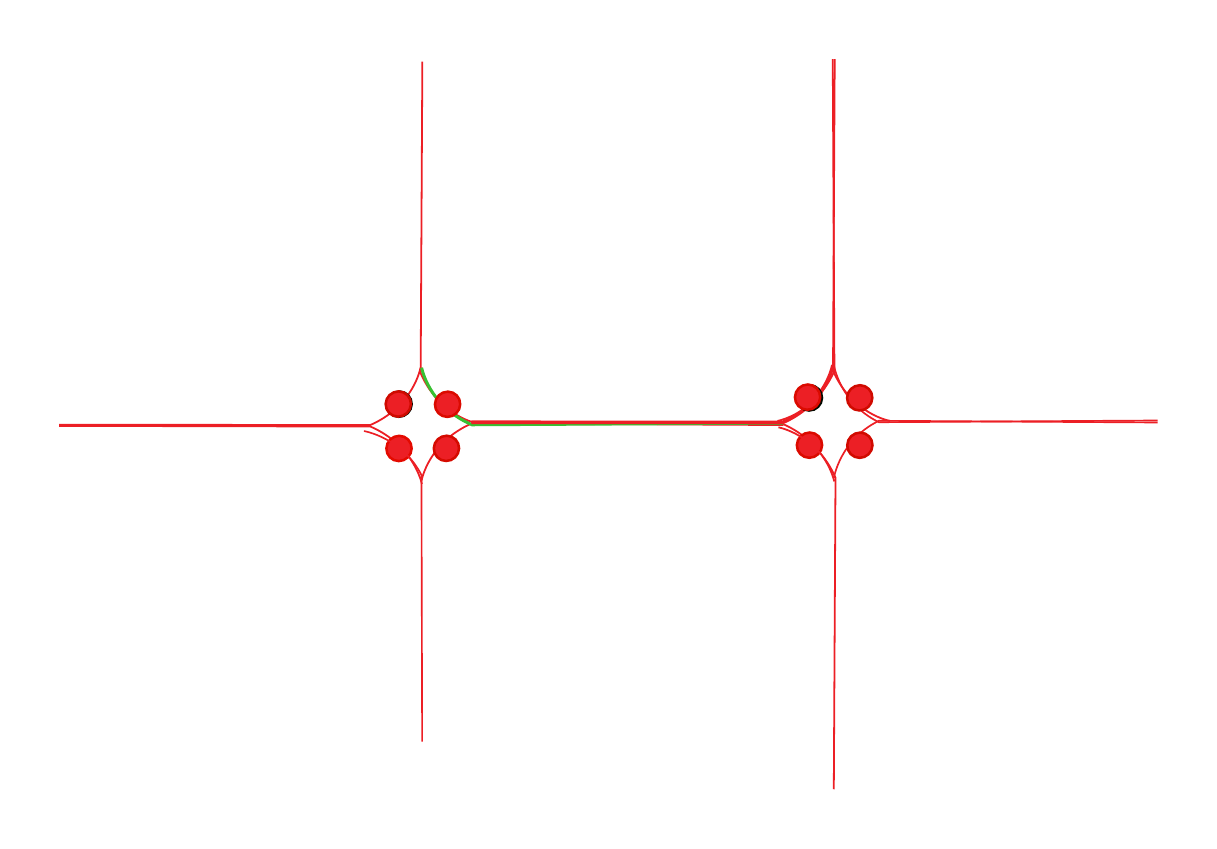} \caption{An $N_2$-piece}
	\end{figure} 
	\begin{figure}[h!] 
		\labellist 
		\small\hair 2pt 
		\endlabellist 
		\centering 
		\includegraphics[scale=0.155]{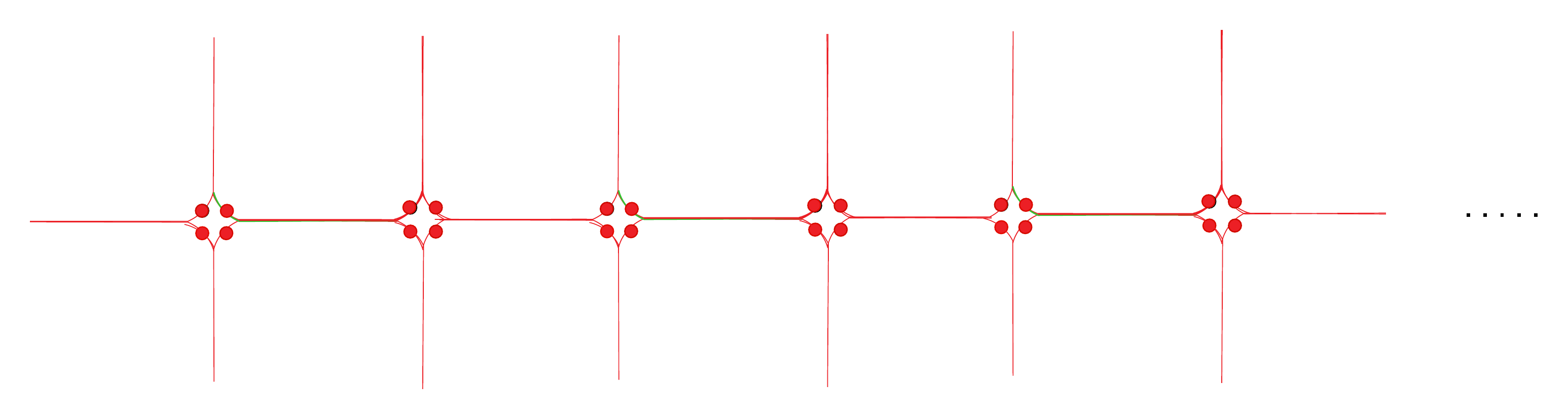} \caption{An $N_{\infty}$-piece}
	\end{figure} 
		\begin{figure}[h!]
			\labellist 
			\small\hair 2pt 
			\endlabellist 
			\centering 
			\includegraphics[scale=0.16]{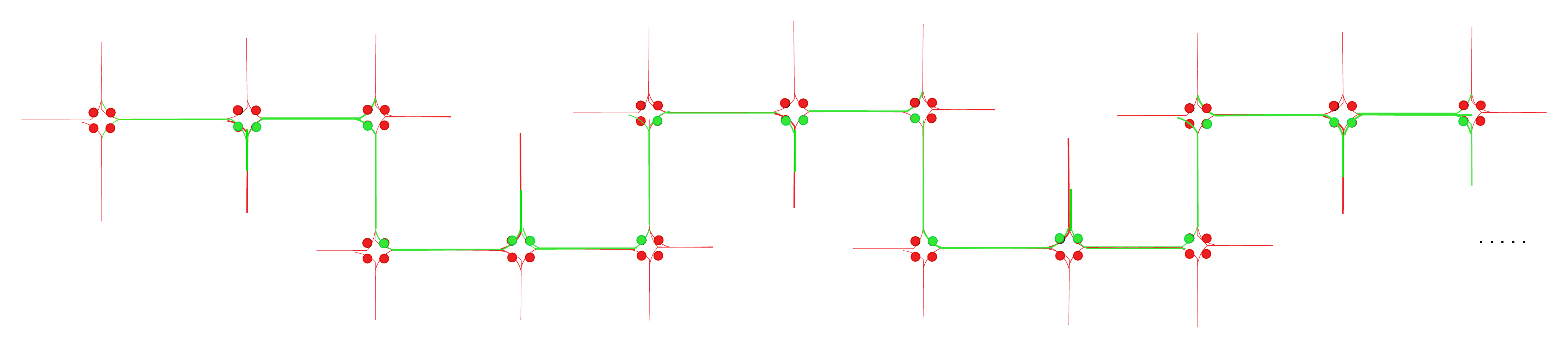} \caption{A non-geometric end containing infinitely many $N_2$-pieces}
		\end{figure}

\begin{remark}
1) A ray crossing an infinite chain of simple branchings of $\mathcal{E}$ does not define a geometric end. An end given by an infinite chain of branchings will be denoted by $\displaystyle C_{\infty}$. So if $\mathcal{E}$ has only geometric ends, it cannot contain a $\displaystyle C_{\infty}$-piece.  \\
2) A geometric end containing an infinite chain contains necessarily a semi-infinite chain that has only $N_2$-pieces, as in the Example ($\mathcal{G}$) of Figure 14. 
\end{remark}
	\begin{figure}[h!] 
		\labellist 
		\small\hair 2pt 
		\endlabellist 
		\centering 
		\includegraphics[scale=0.175]{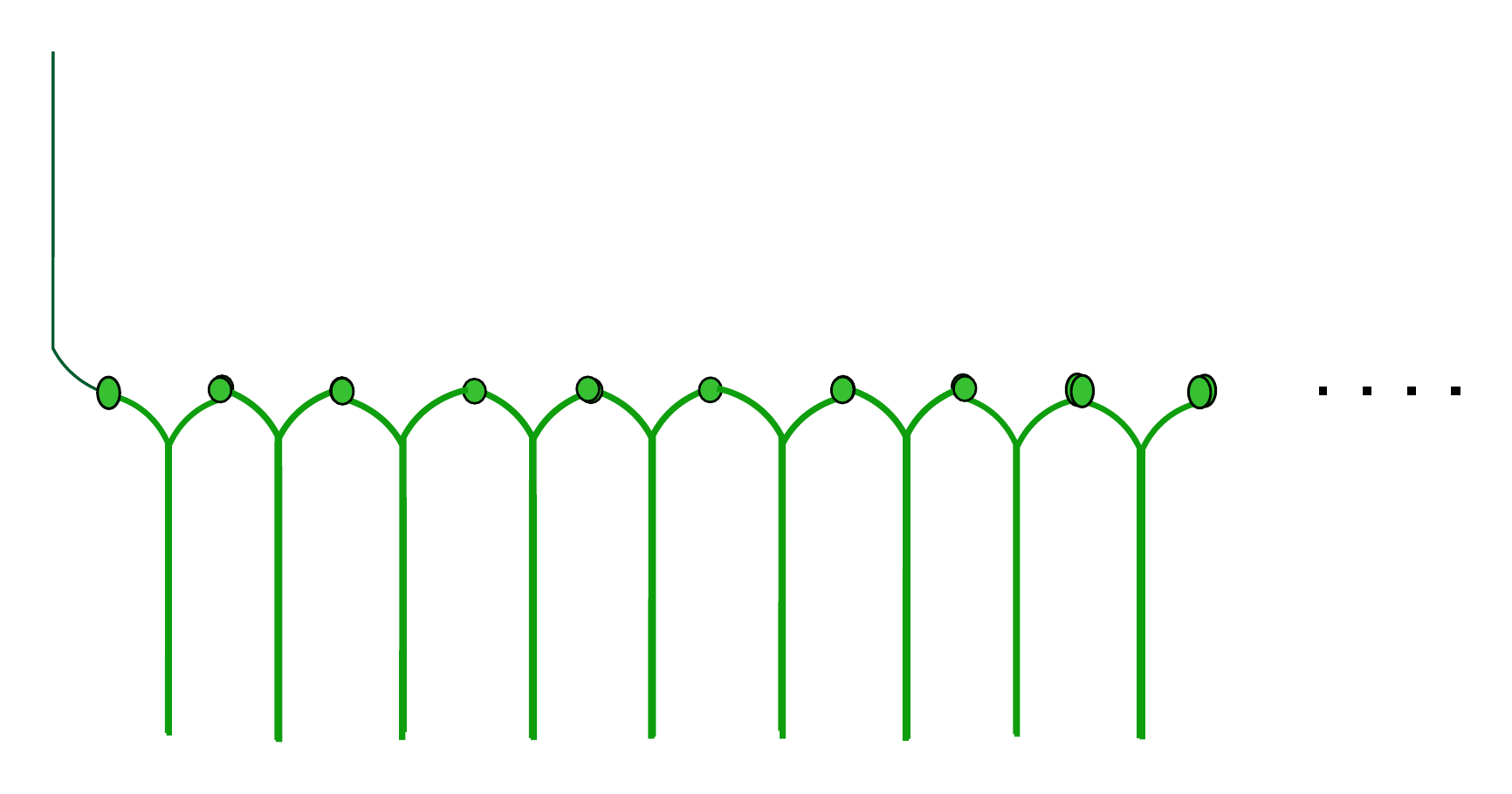} \caption{A $C_{\infty}$-piece}
	\end{figure} 

\begin{definition}\textbf{(Special end).}
Let $\mathcal{E}$ be a $1$-dimensional Riemannian manifold with topology $\mathfrak{T}$. %, containing  only finie order branchings (of order $4$). 
A special end of $\mathcal{E}$ is either a geometric end, or a non-geometric end  containing only finite order branchings, and which is neither $N_{\infty}$ nor a chain with infinitely many $N_{2}$-pieces. 
\end{definition}
The end represented in Figure 17 is not a special end.
%\begin{remark}
%Note that the end in Example (A) represented by a red geodesic ray is a mild end containg infinitely many $N_2$ (since it is a geometric end). 
%\end{remark}
%\begin{theorem}\label{space of leaves with geometric ends}
%Let $\mathcal{E}$ be a $1$-dimensional Riemannian manifold, with topology $\mathfrak{T}$. If $\mathcal{E}$ has only geometric ends, then for any Lorentzian surface $X$ such that $\mathcal{E}_X \simeq \mathcal{E}$, the geodesics of $X$ remain in a ribbon after a certain while.  
%\end{theorem}
\begin{proposition}\label{space of leaves with condition ME}
Let $\mathcal{E}$ be a $1$-dimensional Riemannian manifold with topology $\mathfrak{T}$. %, containing only finie order branchings (of order $4$). 
Suppose that $\mathcal{E}$ satisfies the following condition 
\begin{center}
	$(SE)$: "all ends of $\mathcal{E}$ are special ends", 
\end{center}
then for any Lorentzian surface $X$ such that $\mathcal{E}_X \simeq \mathcal{E}$, the geodesics of $X$ remain in a ribbon after a certain while.  
\end{proposition}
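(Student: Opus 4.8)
The plan is to argue by contraposition: assuming $X$ carries a maximal geodesic $\gamma$ that is \emph{not} eventually contained in any ribbon, I will exhibit an end of $\mathcal{E}_X$ that fails to be special, contradicting $(\mathcal{SE})$. First I would reduce to the relevant case. By Proposition~\ref{Complétude dans une bande} a geodesic lying in a band after a certain while stays there, and a transverse null geodesic sweeps out a whole ribbon in the $\mathbf{x}$-direction and terminates exactly where that ribbon terminates; so the only offending geodesics are non-null ones that leave every maximal ribbon in the sense of \S\ref{Section: complétude}. Fix such a $\gamma$. By Lemma~\ref{Behavior of geodesics, paragraph completeness} — in particular by the contrapositive of (ii) — each exit of $\gamma$ from a maximal ribbon must be preceded by a zero of $\beta$, i.e.\ a tangency to $K$, and consists in crossing a null orbit $\mathfrak{l}_i$, that is a branch point $p_i$ of $\mathcal{E}_X$, into the next maximal ribbon $R_{i}$; since $\gamma$ leaves every ribbon this happens infinitely often.

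The first step is to attach an end of $\mathcal{E}_X$ to $\gamma$. Let $c_i\in G_{\mathcal{E}_X}$ be the distinguished geodesic which is the space of leaves of $R_i$. Consecutive $c_i$ and $c_{i+1}$ run through the common vertex of $\mathcal{G}(\mathcal{E}_X)$ given by the $\widetilde{\mathcal{R}}$-class of $p_i$, where they share an element of $\Sigma$ across which $\gamma$ crosses. Discarding the dead-end excursions (the finite back-and-forth trips produced by the tangencies of $\gamma$), the sequence $(c_i)$ descends to a semi-maximal geodesic ray in the tree $\mathcal{G}(\mathcal{E}_X)$; here one uses Lemma~\ref{Behavior of geodesics, paragraph completeness}(i) together with the square/domino picture from the proof of Proposition~\ref{Proposition: correspondance entre surfaces maximales et variétés étalées} to see that $\gamma$ cannot oscillate indefinitely across a fixed null orbit — this would confine it to a band, hence make it complete and stationary by Proposition~\ref{Complétude dans une bande} and Lemma~\ref{asymptote à K}, against the hypothesis. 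Since $\gamma$ leaves every ribbon this ray is infinite, and it determines an end $e$ of $\mathcal{E}_X$ (a ray of $\mathcal{E}_X$ in the sense of the definitions above) towards which $\gamma$ escapes.

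It remains to check that $e$ is not special, i.e.\ (unwinding the definition) that $e$ is non-geometric \emph{and} either contains an infinite-order branching, or is an $N_\infty$, or is a chain carrying infinitely many $N_2$-pieces. Non-geometricity is the heart of it: a geometric end is carried by a \emph{smooth}, hence $\mathbf{x}$-affine, geodesic ray of $\mathcal{E}_X$, whose image in the tree $\mathcal{G}(\mathcal{E}_X)$ is determined up to eventual coincidence with the ray defined by $\gamma$; but $\gamma$ must become tangent to $K$ infinitely often, which forces the ray it defines to ``turn'' at infinitely many branchings in a way incompatible with the straight-through, $\mathbf{x}$-monotone behaviour of a smooth geodesic ray — so $e$ is non-geometric. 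One then separates cases according to whether $\gamma\perp K$. If $C=0$, the argument in the proof of Theorem~\ref{Complétude à dérivée bornée} shows $\gamma$ leaves each ribbon through a saddle point and the successive pieces it crosses form a normal sequence of elements of $\Sigma$, so $e$ is a semi-infinite normal chain, i.e.\ an $N_\infty$. If $C\neq 0$, Lemma~\ref{Behavior of geodesics, paragraph completeness}(i) forbids $\gamma$ from crossing type~II bands, and one checks (again via Lemma~\ref{Behavior of geodesics, paragraph completeness}(ii), which pins $\gamma$ into the ribbon in which an uncrossed null orbit is interior) that $\gamma$ can change ribbon only after a genuine tangency to $K$; tracking the $p_i$ then shows that either $e$ meets infinitely many infinite-order branchings, or the $p_i$ eventually lie on $4$-order branchings and the combination of the ribbon-exits with the mandatory tangency in each ribbon forces infinitely many normal sub-pieces, i.e.\ $e$ is a chain with infinitely many $N_2$-pieces. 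In every case $e$ is non-special, contradicting $(\mathcal{SE})$.

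The main obstacle is precisely this last step in the case $C\neq 0$: one must control exactly which branchings a non-orthogonal escaping geodesic can traverse, rule out that it slips out to infinity along a thin (geometric) configuration while still changing ribbon infinitely often, and convert the forced tangency to $K$ in each ribbon into the combinatorial statement that the resulting chain carries infinitely many normal pieces. This is the point at which the definition of a special end has to be matched, term by term, against the possible asymptotics of geodesics, using the fine description of geodesics crossing bands and squares from \cite{BM} and \cite{LM}; the non-oscillation statement of the second step and the precise shape of smooth geodesic rays in the third step rely on the same machinery.
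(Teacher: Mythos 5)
Your overall strategy coincides with the paper's: argue by contradiction, attach to the escaping geodesic a ray with infinitely many cusps in $\mathcal{E}_X$, use the tree structure of $\mathcal{G}(\mathcal{E}_X)$ (Fact \ref{Observation sur la nature des bouts --> chaines}) to conclude the corresponding end is non-geometric, and then show the end must be an $N_\infty$ or a chain with infinitely many $N_2$-pieces, contradicting $(\mathcal{SE})$. However, there is a genuine gap at the decisive combinatorial step, and you flag it yourself: in the case $C\neq 0$ you only assert that ``tracking the $p_i$'' and ``the mandatory tangency in each ribbon'' force infinitely many $N_2$-pieces, and then list exactly this as ``the main obstacle.'' An assertion plus an acknowledgement that it is the hard part is not a proof of that part. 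Moreover your case split by the Clairaut constant ($C=0$ versus $C\neq 0$) is not the relevant dichotomy and does not obviously align with the $N_\infty$-versus-$N_2$ alternative: nothing in your argument rules out that a non-orthogonal geodesic traverses each maximal ribbon on few bands and so produces an $N_\infty$-piece rather than $N_2$-pieces, nor do you explain what combinatorial pattern arises when it traverses many bands.

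The paper closes this gap differently and more directly. First, it invokes the hypothesis $(\mathcal{SE})$ at this point: since the end is non-geometric, speciality forces it to contain only finite-order (hence $4$-order) branchings, so the cusped ray is automatically a chain of $4$-order branchings; you instead try to \emph{derive} which branchings the geodesic can meet, which is unnecessary and left incomplete. Second, the case division is on how many bands of each maximal ribbon the geodesic crosses: if it lies in every maximal ribbon on at most $3$ bands it defines an $N_\infty$-piece, and otherwise the key geometric input is that the sign of $\langle K,K\rangle$ alternates on consecutive bands of a ribbon, so each crossing of a maximal ribbon on more than $3$ bands produces $N_2$-pieces; since this happens infinitely often the chain carries infinitely many of them. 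That sign-alternation observation is precisely the ingredient your sketch is missing. (Your reduction to non-null geodesics and your non-geometricity argument are essentially correct, though the latter is stated in a somewhat circular way; the clean statement is that in a tree an end is represented by a unique ray up to eventual coincidence, and a ray with cusps arbitrarily far out cannot eventually coincide with a smooth, $\textbf{x}$-monotone geodesic ray.)
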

\begin{proof}
Suppose there is a geodesic $\gamma$ in $X$ that leaves infinitely many ribbons; its projection on $\mathcal{E}_X$, that we denote by $\hat{\gamma}$, is a ray with infinitely many cusps that leaves any compact subset of $\mathcal{E}_X$, defining this way and end of $\mathcal{E}_X$. We claim that this end is a non-geometric end. To see this, suppose there is a geodesic ray $r$ that defines this end. Define $N(r)$ to be the sequence of elements of $\Sigma$ -taken in an increasing order- defined by $r$.   Fact \ref{Observation sur la nature des bouts --> chaines} above implies that $N(r)$ and $N(\hat{\gamma})$ admit a common subsequence. Furthermore, since  $\mathcal{G}(\mathcal{E}_X)$ is a tree and $r$ is a smooth semi-geodesic, we can say that $N(r)$ is a subsequence of  $N(\hat{\gamma})$, and the two sequences only differ along a $4$-order branching, with $\hat{\gamma}$ having two cusps on it. Since the ray $\hat{\gamma}$ comes from a geodesic of $X$, this behavior is impossible, which proves our claim. Now, since all ends of $\mathcal{E}_X$ are special ends, it follows that this end only contains $4$-order branchings, so this geodesic defines a chain in $\mathcal{E}_X$. If the geodesic lies in every maximal ribbon at most on $3$ bands (outside some compact subset of $X$), then it defines an $N_{\infty}$-piece in $\mathcal{E}_X$. Otherwise, using the fact that the norm of $K$ changes sign on two consecutive bands, we see that each time the geodesic crosses a maximal ribbon in more than $3$ bands, it defines $N_2$-pieces in the chain; so when it crosses infinitely many of them, the chain it defines contains infinitely many $N_2$-pieces.  Both cases are excluded by the "special end" condition $(SE)$, which ends the proof.
\end{proof}
\begin{remark}
The $1$-dimensional Riemannian manifolds with topology $\mathfrak{T}$ containing only geometric ends are special cases of manifolds involved in Proposition \ref{space of leaves with condition ME}. For example, a finite gluing of ($\mathcal{G}$) manifolds gives many examples of them. 
\end{remark}
\begin{theorem}\label{Complétudes: conditions sur les bouts spéciaux}
Let $(X,K)$ be a simply connected null-complete Lorentzian surface with a Killing field $K$. Suppose that	$\mathcal{E}_X$ satisfies the condition $(SE)$ in Theorem \ref{space of leaves with condition ME} above. Then  $X$ is complete if and only if all the ribbons of $X$ satisfy the (SRC) condition of Lemma \ref{Complétude dans un ruban}.
\end{theorem}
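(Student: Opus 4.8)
The plan is to reduce completeness of an arbitrary geodesic of $X$ to the two situations already handled by Proposition \ref{Complétude dans une bande} and Lemma \ref{Complétude dans un ruban}, using Proposition \ref{space of leaves with condition ME} to rule out the escaping behaviour allowed by condition $(\mathcal{SE})$. Throughout it suffices to treat the limit $t\to t_{\infty}^{+}$, the case $t\to t_{\infty}^{-}$ being identical after reversing the geodesic parameter.

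Suppose first that every maximal ribbon of $X$ satisfies the (CR) condition, and let $\gamma$ be a maximal geodesic of $X$. If $\gamma$ is null it is complete, since $X$ is null complete; so assume $\gamma$ is non-null. Because $\mathcal{E}_X$ satisfies $(\mathcal{SE})$, Proposition \ref{space of leaves with condition ME} shows that $\gamma$ lies, for $t$ large, inside some maximal ribbon $R_f$; write $\epsilon=\pm1$ for its type, $C$ for its Clairaut constant, and $I^{+}=\{x(\gamma(t)):t\ge 0\}$ after translating the parameter. If $I^{+}$ is bounded, then $\gamma$ eventually remains in a band, hence is complete by Proposition \ref{Complétude dans une bande} (Lemma \ref{asymptote à K} describes the two possible behaviours inside that band, both compatible with completeness). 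If $I^{+}$ is unbounded, the remark preceding Lemma \ref{Complétude dans un ruban} forces $-\epsilon f$ to be bounded below; when it is also bounded above, the last assertion of Lemma \ref{Complétude dans un ruban} yields completeness of $\gamma$, and when it is unbounded above we apply Lemma \ref{Complétude dans un ruban} together with the (CR) hypothesis for $R_f$ to conclude $\gamma$ is complete. Running the same argument for $t\to t_{\infty}^{-}$ shows $\gamma$ is complete, so $X$ is complete.

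For the converse, assume $X$ is complete and fix a maximal ribbon $R_f$ together with $\epsilon\in\{\pm1\}$ such that $-\epsilon f$ is bounded below and unbounded above; for every other ribbon or choice of $\epsilon$ the (CR) condition is either empty (no geodesic of type $\epsilon$ with unbounded $I^{+}$ stays in the ribbon, by the remark preceding Lemma \ref{Complétude dans un ruban}) or reduces to the bounded case of that lemma, where it holds automatically. Since $-\epsilon f$ is bounded below, $m:=\sup\epsilon f<\infty$; and since $X$ is null complete it has no saddle at infinity, so the side of $\mathcal{E}_{R_f}$ along which $-\epsilon f$ is unbounded runs off to infinity. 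Pick a point of $R_f$ and, using the relation $\beta^{2}=C^{2}-\epsilon f$ to produce an admissible initial unit vector, start there a geodesic $\gamma$ of type $\epsilon$ whose Clairaut constant satisfies $C^{2}>m$ and which points towards that unbounded side; then $x'(t)^{2}=C^{2}-\epsilon f(x)>0$ identically, so $\gamma$ is nowhere tangent to $K$, never leaves $R_f$, and has $I^{+}$ unbounded (compare Corollary \ref{Corrolaire: caractériser géodésiques traversant tout un ruban}). Since $X$ is complete, $\gamma$ is complete, and the "only if" direction of Lemma \ref{Complétude dans un ruban} forces $R_f$ to satisfy the (CR) condition. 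As $R_f$ was arbitrary, every ribbon of $X$ satisfies (CR).

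The heart of the argument is the reduction step: Proposition \ref{space of leaves with condition ME} is what guarantees that no geodesic escapes all ribbons, after which Proposition \ref{Complétude dans une bande} and Lemma \ref{Complétude dans un ruban} exhaust the possibilities. The only point requiring real care is, in the converse direction, the construction of a geodesic with unbounded $I^{+}$ confined to a prescribed ribbon; this is where null completeness intervenes twice, once to make null geodesics complete and once, through the absence of saddles at infinity, to ensure the ribbon actually reaches infinity on the side where $-\epsilon f$ blows up.
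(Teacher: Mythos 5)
Your proof is correct and follows essentially the same route as the paper: Proposition \ref{space of leaves with condition ME} confines every geodesic to a ribbon after a certain while, and Lemma \ref{Complétude dans un ruban} (together with Proposition \ref{Complétude dans une bande} for the bounded case) then characterizes completeness ribbon by ribbon. The extra care you take in the converse direction --- exhibiting a geodesic with unbounded $I^{+}$ confined to a prescribed ribbon --- is already contained in the proof of Lemma \ref{Complétude dans un ruban} via Corollary \ref{Corrolaire: caractériser géodésiques traversant tout un ruban}, so the paper simply cites the lemma as an equivalence.
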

\begin{proof}
According to Proposition \ref{space of leaves with condition ME}, any geodesic of $X$ remains in a ribbon after a certain while, hence $X$ is complete if and only if all the ribbons  in $X$ satisfy the (SRC) condition, by Lemma \ref{Complétude dans un ruban}.
\end{proof}
\begin{corollary}
Let $(X,K)$ be a simply connected null-complete Lorentzian surface with a Killing field $K$. Suppose that 
\begin{itemize}[topsep=0pt, itemsep=0pt, parsep=0pt]
	\item $\mathcal{E}_X$ satisfies the condition $(SE)$ in Theorem \ref{space of leaves with condition ME} above,
	\item the norm of $K$ is bounded.
\end{itemize}
Then $X$ is complete.
\end{corollary}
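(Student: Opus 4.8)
The plan is to combine the two immediately preceding results. By Proposition \ref{space of leaves with condition ME}, the hypothesis $\mathcal{(SE)}$ forces every geodesic of $X$ to remain in some maximal ribbon once $t$ is large enough; together with the null completeness of $X$, this means that completeness of $X$ reduces to completeness of non-null geodesics eventually confined to a single maximal ribbon $R_f$. So I would fix such a geodesic $\gamma$ of type $\epsilon$, contained in $R_f$ for $x \ge x_0$, and show it is complete.

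Here the boundedness of $\langle K,K\rangle$ does all the work. The function $f$ attached to the ribbon $R_f$ is, by definition, $\langle K,K\rangle$ read in the transverse coordinate, so $f$ is bounded on $R_f$. If the interval $I^{+}=\textbf{x}(\gamma([0,t_\infty^{+}[))$ is bounded, then $\gamma$ eventually stays in a band and is complete by Proposition \ref{Complétude dans une bande}. If $I^{+}$ is unbounded, then since $C^2-\epsilon f=\beta^2\ge 0$ is positive and bounded above on $I^+$, one gets
\[
t_\infty^{+}=\int_{x_0}^{\infty}\frac{dx}{\sqrt{C^2-\epsilon f(x)}}\;\ge\;\frac{1}{\sqrt{\sup_{I^{+}}(C^2-\epsilon f)}}\int_{x_0}^{\infty}dx=\infty,
\]
so $\gamma$ is complete. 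This is exactly the content of the last sentence of Lemma \ref{Complétude dans un ruban} ("if $f$ is bounded, these geodesics are complete"): boundedness of the norm of $K$ makes every maximal ribbon of $X$ satisfy the (CR) condition.

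Finally, I would note that this is precisely the Theorem that immediately precedes this corollary, specialized to the case at hand: its hypotheses (simply connected, null complete, $\mathcal{E}_X$ satisfying $\mathcal{(SE)}$) are ours, and the extra assumption that $\langle K,K\rangle$ is bounded guarantees the (CR) condition holds for every ribbon. Hence $X$ is complete. I do not expect any real obstacle here: the only point that needs a word of justification is that "the norm of $K$ is bounded on $X$" descends to boundedness of each local $f$, which is immediate since $f$ is $\langle K,K\rangle$ expressed in the transverse coordinate; all the substance has already been established in Proposition \ref{space of leaves with condition ME} and Lemma \ref{Complétude dans un ruban}.
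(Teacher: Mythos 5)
Your proof is correct and follows essentially the same route as the paper: Proposition \ref{space of leaves with condition ME} confines every geodesic to a ribbon after a certain while, and the boundedness of $\langle K,K\rangle$ invokes the last conclusion of Lemma \ref{Complétude dans un ruban} to conclude completeness there. The explicit integral estimate you add is just an unpacking of that conclusion, so nothing is missing.
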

\begin{proof}
Since $X$ is null complete and the norm $K$ is bounded, the ribbons in $X$ satisfy the (SRC) condition (see the last conclusion of Lemma \ref{Complétude dans un ruban}). We conclude using Theorem \ref{Complétudes: conditions sur les bouts spéciaux} above.
\end{proof}
\subsection{Completeness results with mixed topological and geometrical conditions}
\paragraph{Definition of finite-size ends in terms of non-convergent Cauchy sequences.}
Let $\mathcal{E}$ a $1$-dimensional Riemannian manifold with topology $\mathfrak{T}$. The Riemannian structure on this space defines a pseudometric topology on it; we denote by $\delta$ the induced pseudometric on $\mathcal{E}$. In the following, we say that $(\mathcal{E},\delta)$ is complete if any Cauchy sequence on this space converges (for the induced pseudometric topology). \\
The relation $p \sim q$ if $\delta(p,q) =0$ is an equivalence relation on the set $B$ of branch points; in what follows, the class of a branch point of $ \mathcal{E}$ is considered with respect to this relation. 

\begin{definition}[\textbf{Finite-size end}]
	Let $\mathcal{E}$ a $1$-dimensional Riemannian manifold with topology $\mathfrak{T}$. An end of $\mathcal{E}$ is said to be of finite size if there is a ray in the equivalence class of rays defining this end (see Definition \ref{Définition des bouts avec les géodésiques brisées} and Fact \ref{Observation sur la nature des bouts --> chaines}), which is of finite length. 
\end{definition}

\begin{lemma} Let $\mathcal{E}$ a $1$-dimensional Riemannian manifold with topology $\mathfrak{T}$.\\
	1) A non-convergent injective Cauchy sequence in $\mathcal{E}$ defines an end in this space.\\
	2) If $\mathcal{E}$ has an end of finite size different from $\displaystyle C_{\infty}$, then there is a non-convergent Cauchy sequence  in $\mathcal{E}$.
	%The set of non-convergent increasing Cauchy sequences $(b_n)_n$  of branch points of $\mathcal{E}$ (up to equivalence), satisfying
	%\begin{itemize}
	%	\item $\forall n \in \N, \exists \sigma_n \in \Sigma, b_n, b_{n+1} \in \partial \sigma_n$,
	%	\item $\displaystyle \sum_i \delta(b_n, b_{n+1}) < + \infty$
	%\end{itemize}
	%are in one-to-one correspondance with finite-size ends of $\mathcal{E}$ different from $\displaystyle C_{\infty}$. 
\end{lemma}
\begin{proof}
	1) Denote by $\mathcal{T}_1$ the topology of $\mathcal{E}$, and by $\mathcal{T}_2$ the topology induced by the pseudometric $\delta$. Since the latter is continuous with respect to the $\mathcal{T}_1$ topology, then any compact set for the $\mathcal{T}_1$ topology is compact for the $\mathcal{T}_2$ topology. Let $(b_n)_n$ be a non-convergent injective Cauchy sequence in $\mathcal{E}$. If $(b_n)_n$ is contained in a $\mathcal{T}_1$-compact set $V$ then it converges, for $V$ is also $\mathcal{T}_2$-compact, a contradiction. Consequently, this sequence leaves any $\mathcal{T}_1$-compact set in $\mathcal{E}$. Now, we have to see that for any $\mathcal{T}_1$-compact set $K$, this sequence gives a well defined connected component of $\mathcal{E} \smallsetminus K$. First, observe that since $(b_n)_n$ does not admit a limit point, for any branch point $z$ belonging to a $C_{\infty}$-piece in $\mathcal{E}$, there exists $r>0$ such that   $b_n$  is contained in $\mathcal{E} \smallsetminus  B_r(z)$ for all $n \in \N$, where $B_r(z) $ is the open ball of center $z$ and radius $r > 0$.  
%	Indeed, if $(b_n)_n$ admits a subsequence $(b_{\phi(n)})_{_n}$ in $O$ with infinitely many components of $\Sigma$ intersecting it, then 
%	$\delta(b_{\phi(k)}, b_{\phi(k')}) =  \delta(b_{\phi(k)}, z^k) + \delta(b_{\phi(k')}, z^{k'})$, 
%	with $\delta(z^k,z^{k'})=0$, for there exists an arc connecting $b_{\phi(k)}$ and $z^k$ and an arc connecting $b_{\phi(k')}$ and $z^{k'}$, where $z^k$ and $z^{k'}$ are  branch points in the class of $z$, contained in all the arcs in $\mathcal{E}$ connecting $b_{\phi(k)}$ and $b_{\phi(k')}$. 
%	Since $(b_n)_n$ is a Cauchy sequence, this means that it converges to (any point in the class of) $z$, which is contrary to our hypothesis. 
	% It follows that for $n$ large enough, $(b_n)_n$ is outside of $O_i$, $i \in I$. Now, using the fact that it is a Cauchy sequence finishes the poof of our claim. 

    Suppose now that there are two subsequences $(b_{\phi(n)})_{_n}$ and $(b_{\psi(n)})_{_n}$, such that $b_{\phi(n)} \in E_1$ and $b_{\psi(n)} \in E_2$, $\forall n \in \N$, where $E_1$ and $E_2$ are two different connected components of $\mathcal{E} \smallsetminus K$. Since $K$ is compact, it contains finitely many $\mathcal{Y}$-pieces, hence finitely many branch points $z_1,...,z_m$ from $C_{\infty}$-pieces.   For all $i=1,..,m$, there exists $r_i>0$, such that the ball $B(z_i,r_i)$ does not contain $b_n$ for all $n \in \N$. Consider the set $K':= K \cup \bigcup_{i=1..m} B(z_i,r_i/2)$. We know that for $n$ large enough, $b_n$ is not in $K'$, so there exist two connected components $E_1'$ and $E_2'$ of $\mathcal{E} \smallsetminus K'$ such that $E_1' \subset E_1$ and $E_2' \subset E_2$, containing  $(b_{\phi(n)})_{_n}$ and $(b_{\psi(n)})_{_n}$ respectively. Let $p: \mathcal{E} \to \mathcal{G}(\mathcal{E})$ be the natural projection. The sets $p(E_1')$ and $p(E_2')$ are disjoint by construction, so a ray connecting any two points from these two sets intersects $p(K')$. Consider the (unique) geodesic ray in the real tree $\mathcal{G}(\mathcal{E})$, joining $p(b_{\phi(n)})$ and $p(b_{\psi(n)})$; it intersects  $p(K')$ in some point $a_n$. We have
    \begin{align*}
    d(b_{\phi(n)}, b_{\psi(n)})&=d(b_{\phi(n)},a_n)+d(b_{\psi(n)},a_n)\\
    &\geq d(b_{\phi(n)},K') + d(b_{\psi(n)},K').
  %  &\geq \epsilon >0,
    \end{align*}
   Since $p(K)$ is compact, there exist $N, N' \in \N$ such that 
   \begin{align*}
\epsilon_1 := d( (b_{\phi(n)})_{_{n \geq N}}, p(K)) >0  \textrm{\;and\;} \epsilon_2 := d( (b_{\psi(n)})_{_{n \geq N'}}, p(K)) >0.
    \end{align*}
In consequence, $d(b_{\phi(n)}, b_{\psi(n)}) \geq \epsilon >0, \forall n \geq \sup(N,N')$, with $\epsilon := \inf\{ \displaystyle \min_{i=1..m} r_i/2;\, \epsilon_1;\, \epsilon_2\}$.
     
	2) Consider a finite-size end in $\mathcal{E}$ which is not a $\displaystyle C_{\infty}$-piece, and fix a ray $r$ that defines this end. If $r$ has finitely many cusps, then the end of it is diffeomorphic to an open and bounded interval $I$ of $\R$, so it is does contain a non-convergent Cauchy sequence. If $r$ has infinitely many cusps,  it's clear that the sequence of branch points $(b_n)_n$ contained in $r$ gives an injective Cauchy sequence in $\mathcal{E}$. We claim that this sequence does not converge. To prove this, suppose contrary to our claim that there exists $b_{\infty} \in \mathcal{E}$ such that $(b_n)_n$ converges to $b_{\infty}$.  The sequence $(p(b_n))_n$, the projection of $(b_n)_n$ in $\mathcal{G}(\mathcal{E})$, does not contain any constant subsequence, for otherwise $r$ defines a $C_{\infty}$-piece, contrary to our assumption. It follows that $(p(b_n))_n$ does not contain any constant subsequence, hence the minimal ray contained in $p(r)$ is a smooth geodesic in $\mathcal{G}(\mathcal{E})$ that converges to $p(b_{\infty})$. Consequently, this geodesic can be extended beyond this point; lifting this geodesic to a continuous path in $\mathcal{E}$ gives a ray that extends $r$, which is excluded since $r$ is maximal.  
\end{proof}
\begin{theorem}\label{Complétude à f bornée et espace des feuilles complet, général}
Let $(X,K)$ be a simply connected and null complete Lorentzian surface with Killing field $K$.  Assume that $F$ is bounded, and that $\mathcal{E}_X$ is complete and contains no $C_{\infty}$-piece. Then $X$ is complete.
\end{theorem}
\begin{proof}
Geodesics remaining in a maximal ribbon after a certain while are complete by Lemma \ref{Complétude dans un ruban}. Let $\gamma$ be a geodesic that leaves any maximal ribbon. 
%In particular, $\gamma$ is not contained in an infinite cycle with $F$ changing sign indefinitely. Since $F$ restricted to infinite cycles is not bounded, $\gamma$ is not contained in an infinite cycle. 
Denote by $(\Delta_i)_i$ the sequence of elements of $\Sigma$ crossed by $\gamma$. If $ \sum_i |\Delta_i| < + \infty$, then  $\gamma$ defines an end of $\mathcal{E}_X$ of finite size different from $C_{\infty}$, hence gives rise to a non convergent Cauchy sequence in $\mathcal{E}_X$ by the previous lemma, contrary to our assumption. It follows that  $ \sum_i |\Delta_i|$ is infinite, and since $F$ is bounded,  one can write
\begin{align}\label{Minorer le temps, ici seulement, f bornée}
	t_i = \int_{\Delta_{i}} \frac{dx}{\sqrt{C^2- \epsilon f(x)}} \geq \frac{|\Delta_{i}|}{\sqrt{N+C^2}},
\end{align}
where $N$ is an upper bound on $|F|$, proving that the geodesic is complete. 
\end{proof}
\begin{remark}
In the previous theorem, the hypothesis on the absence of $C_{\infty}$-pieces can be weakened by assuming instead that $F$ is bounded outside of $C_{\infty}$-pieces, and that for any $C_{\infty}$-piece $\mathfrak{C}$ in $\mathcal{E}_X$:
\begin{itemize}[topsep=0pt, itemsep=0pt, parsep=0pt]
	\item $F$ is bounded on the geometric ends of $\mathfrak{C}$,
	\item if $F$ has constant sign on the non-geometric end of $\mathfrak{C}$, then $F$ is not bounded on $\mathfrak{C}$. 		
\end{itemize}
In this case, a geodesic contained in some $C_{\infty}$-piece after a certain while remains necessarily in a maximal ribbon.
\end{remark}
%\begin{corollary}\label{Complétude à f bornée et espace des feuilles complet}
%	Let $(X,K)$ be a simply connected and null complete Lorentzian surface with Killing field $K$. Assume that $F$ is bounded, and that $\mathcal{E}_X$ is complete and contains no infinite cycle. Then $X$ is complete.
%\end{corollary}
In Example \ref{Exemple 2: f bornée, espace des feuilles complet, X incomplète}, we obtain a simply connected and null complete Lorentzian surface $(X,K)$ with a Killing field $K$,  where $\mathcal{E}_X$ is complete and the norm of $K$ is bounded, but $X$ is incomplete. In this example, $\mathcal{E}_X$ contains an infinite chain of simple branchings, and the incompleteness of $X$ comes from those geodesics that cross the infinite chain. 
\subsection{Examples and non-examples}\label{section: examples and non-examples}
%Null completeness is equivalent to the completeness of canonical geodesics of the space of leaves, regarded as a Riemannian manifold. 
%And we have seen that when the norm of the Killing field is bounded, the completeness of canonical geodesics of the space of leaves implies that of the geodesics of the surface which remain in a ribbon after a certain while.
In this section, we illustrate various completeness behaviours for the geodesics of a null complete surface (even on the same surface), which may appear when omitting certain conditions in the completeness results of Section~\ref{Section: complétude}. This illustrates the complexity of the question. All the surfaces obtained in this section are L-complete, hence maximal.

We call an "infinite branch" of $\mathcal{E}$ an element of $\Sigma$ near infinity.\\
%We obtain some examples which show that in general, there is no connection between the geodesic completeness of $ X $ and the completeness of the space of Killing orbits $ \mathcal{E}_X $. We also give an example of a simply connected null complete surface, all of whose timelike and spacelike geodesics are incomplete, except orthogonal geodesics.
 
A symmetric saddle is obtained in \cite{BM}, Proposition 2.29, as the extension of a domino (whose unique null orbit of $K$ is incomplete) by a simply connected surface containing a unique zero of $K$, whose metric is symmetric with respect to $p$. This extension is unique up to isometry (\cite[Proposition 2.37]{BM}). This extends the null orbits of $K$ which are geodesically incomplete into complete geodesics.
\begin{example}[\textbf{A complete saddle}] 
\end{example}

An easy example of a complete saddle is given by the Minkowski plane. To obtain a non-flat saddle, consider a symmetric saddle defined as the extension of a domino $(U,K)$, such that the norm of the Killing field on $U$ induces a bounded function $f$ defined on $\R$; for instance: $f(x)= \arctan x$. The saddle we obtain is complete (see Corollary \ref{Complétude d'une selle L-complète}). 

\begin{example}[\textbf{A null complete saddle -hence maximal- but not complete}]\label{Exemple 2.3.19}
\end{example}
If we consider now a symmetric saddle defined as the extension of a domino $(U,K)$, such that the norm of $K$ on $U$ induces a function $f$ defined on $\R$ by $f(x):=e^x-1$, it follows from Corollary \ref{Complétude d'une selle L-complète} that the saddle is incomplete. In this example, spacelike geodesics are complete and timelike geodesics are incomplete; among the latter, some are semi-complete. If now $f$ is defined by $f(x):=\sinh x$, then we get a null complete saddle whose non-null geodesics are all  incomplete.

\begin{example}\label{Exemple 0}
%	Let $(X,K)$ be a simply connected and null complete Lorentzian surface.	Suppose that all ribbons in $ X $ have infinite bands on both ends, i.e. distinguished geodesics of $\mathcal{E}_X$ have an infinite branch from both sides. Since any geodesic of  $\mathcal{E}_X$ is contained in an infinite branch after a certain while, the null completeness of $\mathcal{E}_X$ implies its completeness. 	However $X$ could contain incomplete geodesics: it is sufficient, by the Lemma \ref{Complétude dans un ruban}, to set  $ f(x) = x^{\alpha}, \alpha > 2 $ on one of the infinite branches. 
\end{example}
Let $(X,K)$ be a small surface. Distinguished geodesics of $\mathcal{E}_X$ have an infinite branch from both sides. Since any geodesic of  $\mathcal{E}_X$ is contained in an infinite branch after a certain while, the null completeness of $\mathcal{E}_X$ implies its completeness. 
However $X$ could contain incomplete geodesics: it is sufficient, by Lemma \ref{Complétude dans un ruban} to set  $ f(x) = x^{\alpha}, \alpha > 2 $ on one of the infinite branches.

\begin{example}[\textbf{A simply connected null complete surface, all of whose timelike and spacelike geodesics are incomplete, except orthogonal geodesics}]\label{exemple selle incomplète et orthogonal complète}   
\end{example}
Let $(I_n)_{n \geq 0}$ be a sequence of pairewise disjoint intervals of $\R^{+}$ such that $I_0=]1,2[$, $ \sum_{n} |I_n| = d < \infty$, and the distance between $I_n$ and $I_{n+1}$ is equal to $1$, for all $n$. Denote by $(J_n)_{n\geq0}$ the sequence of intervals given by the connected components of $\R^+ \smallsetminus \cup I_n$. We have $ \forall n \in \N, |J_n|=1$. Consider the following piecewise constant function $\phi$:

$$
\phi(x) = \left\{
\begin{array}{ll}
n^3 & \mbox{if\;} x \in J_n, \\
\frac{1}{n^2} &\mbox{if\;} x \in I_n.
\end{array}
\right.
$$
We have 
\begin{enumerate}[itemsep=0pt, topsep=0pt, parsep=0pt]
	\item for all $M>0$, $\mu \{ \phi \leq M\} <  \infty$;
	\item for all $\alpha >0, \exists \delta_{\alpha} >0$, $\displaystyle \int_{\{\phi \geq \alpha\} \cap \R^{+}} \frac{dx}{\sqrt{\phi(x)}} = \delta_{\alpha} + \displaystyle \sum_n \frac{1}{n^{3/2}} < \infty$, ;
	\item $\displaystyle \int_{\R^{+}} \frac{dx}{\sqrt{\phi(x)}} = \displaystyle \sum_n \frac{1}{n^{3/2}} + d \displaystyle \sum_n n = \infty$.
\end{enumerate}
Let $\tilde{\phi}$ be a smooth function sufficiently close to $\phi$, which satisfies these three properties. Consider now a symmetric saddle defined as the extension of a domino $(U,K)$, such that the norm of the Killing field on $U$ induces a function $f$ defined on $\R$ by: 
$$
f(x) = \left\{
\begin{array}{ll}
\tilde{\phi}(x) & \mbox{if\;} x \geq 0, \\
-\tilde{\phi}(-x) &\mbox{if\;} x <0.
\end{array}
\right.
$$
The assertions (1) and (2) imply that timelike and spacelike geodesics of $S$ not orthogonal to the Killing field are incomplete, by Corollary \ref{Complétude d'une selle L-complète} and \ref{C critique selle}, whereas geodesics orthogonal to $K$ are complete by assertion (3). %This gives an example of an incomplete saddle whose null geodesics and orthogonal geodesics are complete. 
\begin{example}[\textbf{A simply connected surface all of whose geodesics are complete, except timelike geodesics from a certain point}]\label{Exemple dérivée bornée 1er}
\end{example}
Let $\mathcal{E}$ be a $1$-dimentional manifold homeomorphic to $N_{\infty}$.  Denote by $(\sigma_i)_i$ the (unique) infinite normal sequence of $\mathcal{E}$. Consider a local diffeomorphism $\textbf{x} \in C^{\infty}(\mathcal{E},\R)$ such that:
\begin{itemize}[itemsep=0pt, topsep=0pt, parsep=0pt]
	\item   $\textbf{x}(\sigma_0)=]-\infty,1[$, $\textbf{x}(\sigma_1)=]0,1[$,
	\item  $\forall i \in \N^*, |\sigma_{2i}|=|\sigma_{2i+1}|=1/(2i)^4$, so that $\textbf{x}(\sigma_{2i})=\textbf{x}(\sigma_{2i+1})=]0,1/(2i)^4[$,
%	\item $\forall i \in \N, |\sigma_{4i+2}|=|\sigma_{4i+3}|=1/(4i+2)^4$.
\item $\textbf{x}$ is unbounded on infinite branches of $\mathcal{E}$.
\end{itemize}
 % and $\textbf{x}(\sigma_{4i+2})=\textbf{x}(\sigma_{4i+3})=]0,1/(4i+2)^4[$. 
Consider $F \in C^{\infty}(\mathcal{E},\R)$, which is a negative function on infinite branches of $\mathcal{E}$, except on $\sigma_0$, and with bounded transverse derivative on them. Denote by $f_i$ the restriction of $F $ to $\sigma_i$ composed with $ \textbf{x}^{-1}$, and assume further that 
\begin{itemize}[itemsep=0pt, topsep=0pt, parsep=0pt]
	\item $f_{0}$ has bounded derivative,
	\item $\forall i \in \N^*, \forall x \in ]0,1/(2i)^4[,  f_{2i}(x)=f_{2i+1}(x) := \frac{1}{(2i)^4}f_1((2i)^4x)$.
\end{itemize}
Let $(X,K)$ be a simply connected Lorentzian surface such that $\mathcal{E} \simeq \mathcal{E}$; it is null complete. It's clear that $F$ has bounded transverse derivative, and in consequence, all geodesics not orthogonal to $K$ are complete (Theorem \ref{Complétude à dérivée bornée}). Spacelike geodesics orthogonal to $K$ are contained in a ribbon after a certain while, so they are complete by the last conclusion of Theorem \ref{Complétude à dérivée bornée}. Finally, we see that the series defined in (\ref{Complétude à dérivée bornée, condition sur les géod orthogonales}) involving the infinite normal sequence of $\mathcal{E}_X$ converges in this case, so that timelike geodesics orthogonal to $K$ are incomplete. Note that the latter are all equivalent by the isometric flow.  We see that these conslusions are independent of the choice of the linking structure on $\mathcal{E}$. 
\begin{example}[\textbf{A simply connected surface $(X,K)$ all of whose geodesics are complete, except timelike and spacelike geodesics orthogonal to $K$}]\label{Exemple dérivée bornée 2ème}
\end{example}
Let $\mathcal{E}$ be a $1$-dimensional manifold such that $\mathcal{G}(\mathcal{E})$ is the Cayley graph of the free group $G$ on two generators ${\displaystyle a}$ and ${\displaystyle b}$ corresponding to the set ${\displaystyle S=\{a,b,a^{-1},b^{-1}\}}$. Travelling along an edge to the right represents right multiplication by ${\displaystyle a}$, while travelling along an edge upward corresponds to the multiplication by ${\displaystyle b}$. Since the free group has no relations, the Cayley graph is a tree. The horizontal edges are $\{g,ga\}$ and the vertical ones are $\{g,gb\}$, $g \in G$. Fix a sequence $\alpha_n=1/n^4, n \geq 1$. For every $g \in G$, we denote by $\#[a,a^{-1}]_g$ the number of elements $a$ and $a^{-1}$ in $g$, and $\#[b,b^{-1}]_g$ the number of elements $b$ and $b^{-1}$ in $g$. Define $N_g:=\#[a,a^{-1}]_g - \#[b,b^{-1}]_g +1$, and suppose that the length of the edges $\{g,ga\}$ and $\{g,gb\}$ is given by $\alpha_{|N_g|}$. We have $ \forall i \in \N, N_{ga^{\pm i}} = N_g +i$ and $N_{gb^{\pm i}} = N_g -i$.
%so the sequence $N_g$ is an increasing sequence in $\N$ on every $N_{\infty}$-piece of $\mathcal{E}$. 
A distinguished maximal semi-geodesic in $\mathcal{E}$, for some linking structure on it, corresponds to a word in $G$ of the form $g_0\, z^{\epsilon_1}\, \nu(z)^{\epsilon_2}\, z^{\epsilon_3}\, \nu(z)^{\epsilon_4}...$, where $g_0 \in G, z \in \{a,b\}, \epsilon_i \in \{-1,+1\}$, and $\nu$ is the non-trivial permutation of the set $\{a,b\}$. 
So we see that the sequence $|N_g|$ on that geodesic is either stationary or has two stationary subsequences. Therefore, if $(\Delta_i)_i$ is the sequence of elements of  $\Sigma$ belonging to some distinguished maximal semi-geodesic on $\mathcal{E}$, for some linking structure on it, then
\begin{align}\label{Exemple dérivée bornée 2ème, divergence null complète}
\displaystyle \sum_i |\Delta_i| = +\infty.
\end{align} 
Now let $(X,K)$ be a simply connected Lorentzian surface with Killing field $K$, such that $\mathcal{E}_X = \mathcal{E}$, and the pseudometric induced by the Riemannian structure on $\mathcal{E}_X$ coincides with that defined above. The surface is null complete by (\ref{Exemple dérivée bornée 2ème, divergence null complète}). \\
\\
For every $g \in G$, denote by $\sigma^a_g$ (resp. $\sigma^b_g$) the element in $\Sigma$ corresponding to the edge $\{g,ga\}$ (resp. $\{g,gb\}$). 
Denote by $f_1$ some $C^{\infty}$ function on $[0,1]$ with bounded derivative, such that derivatives of all orders at $0$ and $1$ vanish.
For $ g \in G$, set $\lambda_g:=\textbf{x}(\{g\})$, where $\{g\}$ is the vertex of $\mathcal{G}(\mathcal{E})$ corresponding to $g$. Define
\begin{align*}
&\forall x \in \textbf{x}(\sigma^{a}_{g})=]\lambda_g, \lambda_g+\frac{1}{\alpha_{|N_g|}^{4}}[,\;\; f^{a}_{g}(x) := \frac{1}{\alpha_{|N_g|}^{4}}f_1(\alpha_{|N_g|}^{4} (x-\lambda_g)), \\
&\forall x \in \textbf{x}(\sigma^{b}_{g})=]\lambda_g-\frac{1}{\alpha_{|N_g|}^{4}}, \lambda_g[, \;\;f^{b}_{g}(x) := \frac{1}{\alpha_{|N_g|}^{4}}f_1(\alpha_{|N_g|}^{4} (x+\lambda_g)).
\end{align*}
 
Suppose that the norm of $K$ is given by a function $F \in C^{\infty}(\mathcal{E},\R)$, whose restriction to $\sigma_g^a$ (resp. $\sigma_g^b$), composed with $\textbf{x}^{-1}$, is given by $f_g^a$ (resp. $f_g^b$). 

$F$ has bounded transverse derivative, so as in the Example \ref{Exemple dérivée bornée 1er} above, all geodesics not orthogonal to $K$ are complete, whereas spacelike and timelike orthogonal geodesics are all incomplete. 
\begin{example}[\textbf{complete} $X$\textbf{, incomplete} $\mathcal{E}_X$]\label{Exemple: l'importance du choix de linking structure}
\end{example}
Let $\mathcal{E}$ be a $1$-dimensional smooth manifold with topology $\mathfrak{T}$, and let $\eta$ (represented in green in the figure below) be a geodesic ray in $\mathcal{E}$ homeomorphic to an interval $[a,b[$, such that $\eta \cap B = \{b_n, n \in \N\}$, with $(b_n)_n$ converging to $b$. Suppose that $B$ is locally finite except along $\eta$. Let $\mathcal{A}$ be a linking structure on $\mathcal{E}$ such that any distinguished geodesic shares with $\eta$ at most one branch point. Take $\textbf{x} \in C^{\infty}(\mathcal{E},\R)$ a local diffeomorphism. Since $B$ is locally finite except on $\eta$, it is possible to change $\textbf{x}$ so that $\mathcal{A}$-distinguished geodesics are complete, and $\eta$ is incomplete, so henceforth we assume that $\textbf{x}$ satisfies this property.  Corollary \ref{Corollaire 1 (cas complet): caractérisation top de l'espace des feuilles} ensures the existence of a smooth and simply connected Lorentzian surface $(X,K)$ such that $\mathcal{E}_X \simeq \mathcal{E}$, which is complete. It appears that among complete surfaces, the space of leaves may be incomplete. 
 
%According to Theorem \ref{Complétude à courbure bornée},  a simply connected and null complete Lorentzian surface $(X,K)$ with bounded curvature is complete. Among these surfaces, the space of leaves may be incomplete, as shown in the following example. 
%Let $\eta$ be a geodesic of $\mathcal{E}_X$ (represented in green in the figure below), such that the closure in $\mathcal{E}_X$ of every element of $\Sigma$ contained in $\gamma$ is the space of leaves of a type II band in $X$.
\begin{figure}[h!] 
	\labellist 
	\small\hair 2pt 
	\pinlabel $\eta$ at 276 497 
	\pinlabel $\frac{1}{2^2}$ at 223 500 
	\pinlabel $\frac{1}{3^2}$ at 169 298 
	\pinlabel $\frac{1}{4^2}$ at 297 172 
	\pinlabel $\frac{1}{5^2}$ at 416 307 
	\pinlabel $\frac{1}{6^2}$ at 321 322 
	\endlabellist 
	\centering 
	\includegraphics[scale=0.29]{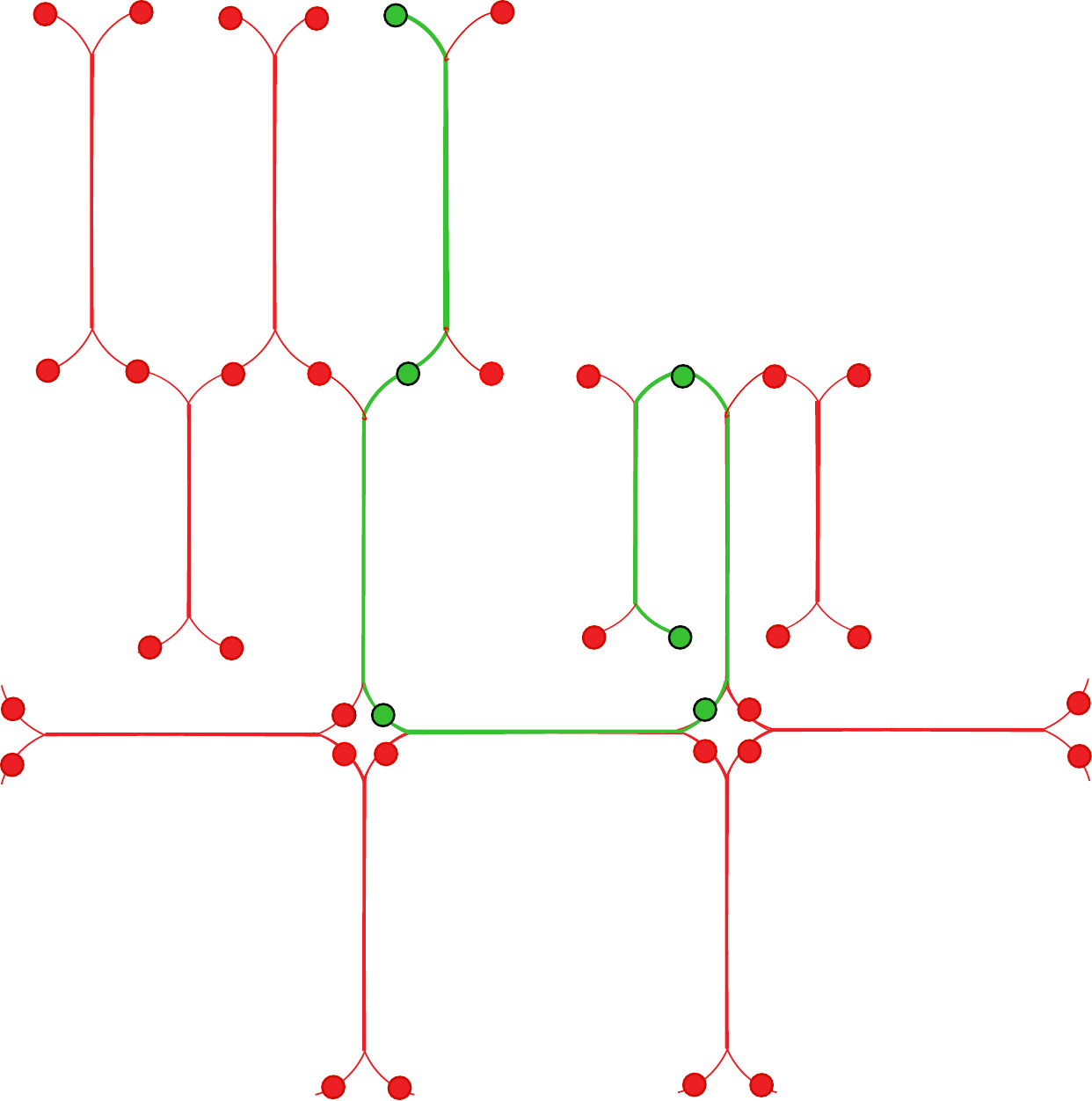} \caption{Example \ref{Exemple: l'importance du choix de linking structure}: the space of Killing orbits $\mathcal{E}_X$ is incomplete}
\end{figure} 

%Suppose that $\eta$ is incomplete. 
The closure in $\eta$ of every element of $\Sigma$ is the space of leaves of a type II band in $X$, so a geodesic of $X$ goes through a piece of $\eta$ at most on two bands. So it is clear that the completeness of $ \eta $ has no consequence on that of the surface. 
%Now since a manifold $ \mathcal{E}_X $, for a null complete surface $ X $ different from the one in  Example \ref{Exemple 0}, always contains such a geodesic $ \eta $, 
It is then possible to modify the geodesic completeness of $ \mathcal{E} $ by changing $\textbf{x}$ on $\eta$ without affecting the completeness of $ X $. \\
Another interesting fact is that according to the linking structure we choose  on $\mathcal{E}$, the surface can be null complete or not. So the hypothesis of the existence of a linking structure such that the distinguished geodesics are complete in Corollary \ref{Corollaire 1 (cas complet): caractérisation top de l'espace des feuilles} is not superfluous.\\

\begin{example}[\textbf{incomplete} $X$\textbf{, complete} $\mathcal{E}_X$]\label{Exemple 2: f bornée, espace des feuilles complet, X incomplète}
\end{example}
Suppose that $\mathcal{E}_X$ contains a $C_{\infty}$-piece, as in the figure below. Assume further that on each element of $\Sigma$ contained in $C_{\infty}$, the norm of the Killing field is given by $$f_i(x)= \sin^2(\frac{1}{\sqrt{x-x_i}}), \mathrm{\;\;for\;\;} x \in I_i:=[\frac{1}{(\pi i+\pi)^2}-x_i,\frac{1}{(\pi i)^2}-x_i], \mathrm{\;\;with\;\;} x_i:=\frac{1}{(\frac{\pi}{2}+\pi i)^2}.$$
The sequence of the lengths of the bands is thus given by $(\Delta_{x_i})_i =(\frac{1}{(\pi i)^2} -\frac{1}{(\pi i+\pi)^2})_i$, whose sum converges.  Finally, assume that the length of the bands near infinity is infinite, and that outside these bands and those of $ C_{\infty} $, the length of the bands is equal to $ 1 $. The space of Killing orbits thus obtained is complete.

Now let $ \gamma $ be a spacelike  geodesic that goes through the space of Killing orbits $ \mathcal{E}_X $ along the green piece represented in Figure 21 above. When $ \gamma $ turns around in $ \mathcal{E}_X $, it means that it is tangent to a leaf of the Killing field (see the figure above).
\begin{figure}[h!] 
	\labellist 
	\small\hair 2pt 
	%	\pinlabel $\gamma$ at 276 497 
	%	\pinlabel $\frac{1}{2^2}$ at 223 500 
	%	\pinlabel $\frac{1}{3^2}$ at 169 298 
	%	\pinlabel $\frac{1}{4^2}$ at 297 172 
	%	\pinlabel $\frac{1}{5^2}$ at 416 307 
	%	\pinlabel $\frac{1}{6^2}$ at 321 322 
	\endlabellist 
	\centering 
	\includegraphics[scale=0.26]{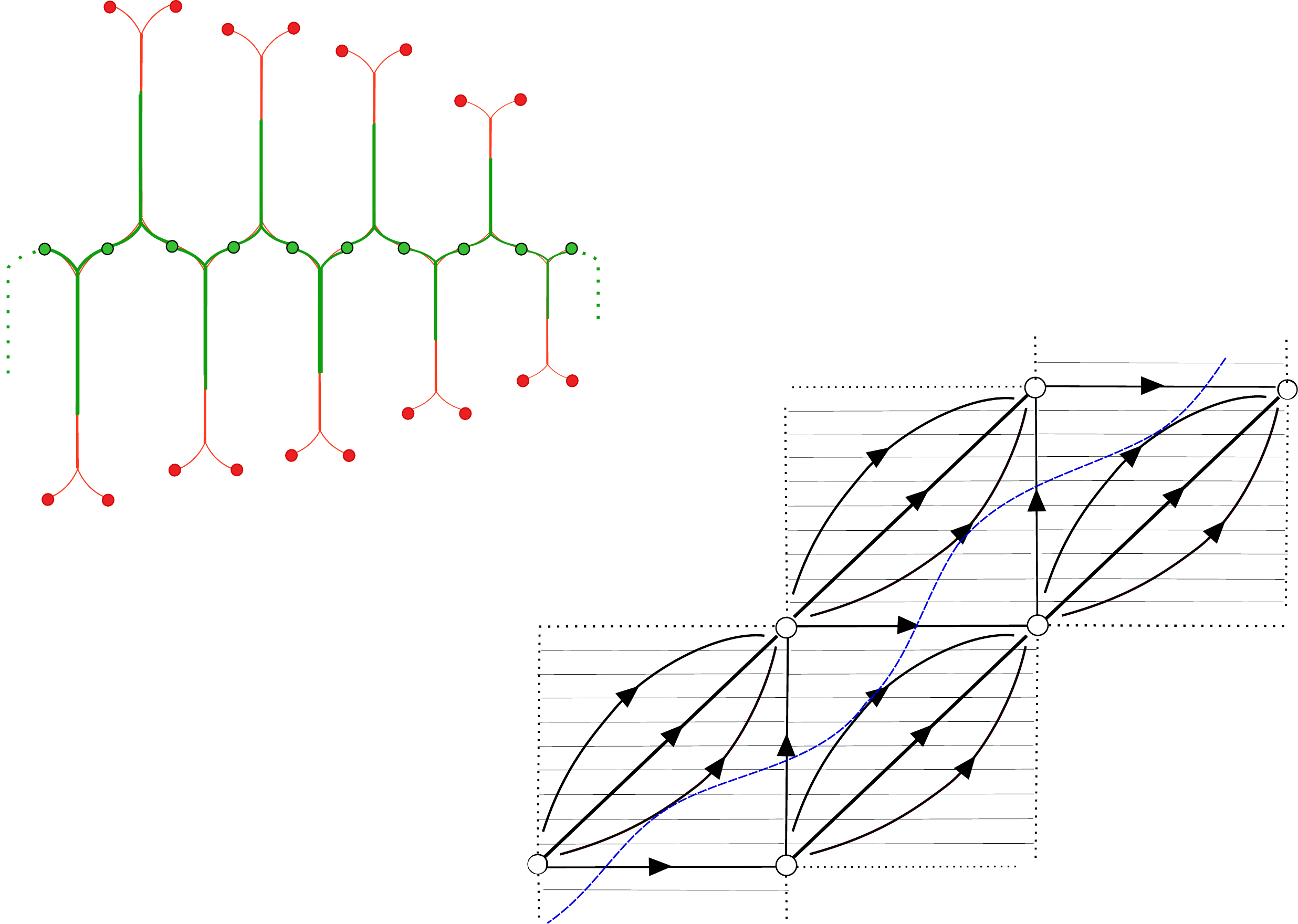} \caption{$C_{\infty}$:  an infinite chain of simple branchings in the space of Killing orbits}
\end{figure}

We claim that $\gamma$ is incomplete. Indeed, denote by $t_i$ the time $\gamma$ takes to cross each band of $C_{\infty}$; we have $t_i \leq 2 \int_{y_i}^{x_i} \frac{dx}{\sqrt{g_i(x)}}$, where $y_i =  \frac{1}{(\pi i+\pi)^2}$, $x_i =  \frac{1}{(\pi i+\pi/2)^2}$ and $g_i(x)=C^2-f_i(x)$. Now write  $|g_i(x)| = |g_i^{'}(a_i)||x-z_i|$, with $a_i \in ]y_i,x_i[$ and $g_i(z_i)=0$. It follows that for all $i$, $$\int_{y_i}^{x_i} \frac{dx}{\sqrt{g_i(x)}} \leq \frac{2 \sqrt{x_i-y_i}}{\sqrt{g_i^{'}(a_i)}}.$$
Since $g_i$ is uniformly bounded from below on a neighborhood of the  null orbits of the Killing field, $\gamma$ crosses this neighborhood with finite length; taking this into account, we are reduced to considering   a sequence $ | g_i^{'}(z_i) | $ which tends to infinity, by isolating the neighborhoods of the zeros of the norm. There is therefore a constant $ M> 0 $ such that $ \int_{y_i}^{x_i} \frac{dx}{\sqrt{g_i(x)}} \leq 2 M \sqrt{x_i-y_i} $. As a result, the sum of the $ t_i$'s is bounded from above  by the series $ \sum_i 4 M \sqrt{x_i-y_i} $ which converges by definition; this proves that $ \gamma $ is incomplete.
\newpage
\bibliographystyle{abbrv}
\bibliography{Bibliographie}
\end{document}